\newtheorem{lem}{Lemma}[section]
\newtheorem{defi}{Definition}[section]
\newtheorem{rem}{Remark}[section]
\newtheorem{thm}{Theorem}[section]
\newtheorem{theorem}{Theorem}[section]
\newtheorem{lemma}[theorem]{Lemma}
\newtheorem{e-proposition}[theorem]{Proposition}
\newtheorem{e-definition}[theorem]{Definition\rm}
\newtheorem{example}{Example}
\newtheorem{cor}{Corollary}[section]
\newtheorem{pro}{Proposition}[section]
\def\T{{\mathbb T}}
\begin{document}


\title{Ergodic Transport Theory and Piecewise Analytic Subactions for Analytic Dynamics}

\author{A. O. Lopes \,\footnote{Instituto de Matem\'atica, UFRGS, 91509-900 - Porto Alegre, Brasil. Partially supported by CNPq, PRONEX -- Sistemas Din\^amicos, Instituto do Mil\^enio, and beneficiary of CAPES financial support }, E. R. Oliveira\footnote{Instituto de Matem\'atica, UFRGS, 91509-900 - Porto Alegre, Brasil} and D. Smania \,\footnote{Departamento de Matem\'atica, ICMC-USP 13560-970
S\~ao Carlos, Brasil. Partially supported by   CNPq 310964/2006-7,  and  FAPESP 2008/02841-4}}
\date{\today} 
\maketitle


\begin{abstract}

We consider a piecewise analytic  real expanding map $f: [0,1]\to [0,1]$
of degree $d$ which preserves orientation, and a real  analytic positive potential $g: [0,1] \to \mathbb{R}$.  We assume the map and the potential have a complex analytic extension to a neighborhood of the interval in the complex plane.  We also assume $\log g$ is well defined for this extension.

 It is known in Complex Dynamics that under the above hypothesis, for the given
potential  $\beta \,\log g$, where $\beta$ is a real
constant, there exists a real analytic
eigenfunction $\phi_\beta$ defined on $[0,1]$ (with a complex analytic extension) for the Ruelle operator of $\beta \,\log g$.


Under some assumptions we show that   $\frac{1}{\beta}\, \log \phi_\beta$ converges and  is a piecewise analytic calibrated subaction.

Our theory can be applied  when $\log g(x)=-\log f'(x)$. In that case we relate the involution kernel to the so called scaling function.

\end{abstract}

Keywords: maximizing probability, subaction, analytic dynamics, twist condition, Ruelle operator, eigenfunction, eigenmeasure, Gibbs state, the involution kernel, ergodic transport, large deviation, turning point, scaling function.

\bigskip

Mathematical subject Classification: 37C30, 37C35, 37A05, 37A45, 37F15, 90B06
\maketitle

\newpage

\centerline{ 0. INTRODUCTION}
\bigskip

We consider a piecewise  real analytic  expanding map $f: [0,1]\to [0,1]$ of degree $d$ which preserves orientation
and a real analytic positive potential $g: [0,1] \to \mathbb{R}$.

 We assume the map and the potential have a complex analytic extension to a neighborhood of the interval in the complex plane.  We also assume that
 $\log g$ is well defined for this complex neighborhood and for the extension of $g$.

In our notation $A=\log g$, with $A$ analytic, then we
denote
$$ m(A)\,=\,\max_{\nu \text{\, an invariant probability for $f$}}
\int A(x) \; d\nu(x),$$ and $\mu_{\infty\, A}$ any probability which realizes the maximum value. Any one
of these probabilities $\mu_{\infty\, A}$ is called a maximizing
probability for $A$.
In general these probabilities do not necessarily give  positive weight to every open set.

\bigskip

An important result in Complex Dynamics is the following: under the above hypothesis, for a given real
analytic potential  $\beta \,\log g$, where $\beta\geq 0$ is a real
constant, there exists a real analytic positive
eigenfunction $\phi_\beta$ defined on $[0,1]$ for the real Ruelle operator $P_{\beta \log g}$  of the potential $\beta \,\log g$ (see \cite{PU} \cite{MS} \cite{Ru}).  The existence of a complex analytic extension of $g$ to the interval $[0,1]$ (see, for instance, section 2.5 beginning in page 96 \cite{Ba}, or, \cite{Ru}, \cite{MM}) is a key point in our proof.

We denote $\mu_\beta$ the equilibrium state for $\beta \log g$.
We recall that any accumulation point  $\mu_{\beta_n}, n \to \infty$,  is a maximizing probability for the real function $\log g$ (restricted to the interval $[0,1]$, see for instance  \cite{CG} \cite{Bousch1} \cite{CLT}. We will present precise definitions later.

It is known that any convergent subsequence of the equicontinuous family
$\frac{1}{\beta}\, \log \phi_\beta$ is a calibrated subaction (see \cite{CLT}).
Calibrated subactions play a very important role  in the understanding of the properties of the maximizing probabilities  (see \cite{Jenkinson1} \cite{CLT} \cite{BLT}).

A pertinent question is to know if  there exists a real analytic calibrated subaction?
There are examples where there is no real analytic calibrated subaction (see \cite{Bousch1}). Under what hypothesis one can find real analytic calibrated subactions? Is it possible to get piecewise real analytic calibrated subactions under some reasonable conditions? Our purpose here is to address these questions.

\bigskip

 A natural strategy would be to consider the complex extension of $\frac{1}{\beta}\, \log \phi_\beta$ to a certain complex neighborhood $O_\beta$ of $[0,1]$ and then to use the criteria of normal families when $\beta \to \infty$.
One  problem we have to face in this approach is that the results in the literature concerning the existence of the eigenfunction $\phi_\beta$ do not give a sharp information on the size of $O_\beta$ when $\beta$ changes. Our result shows that in general there is no uniform control of this size in the limit $\beta \to \infty$. This implies that the naive strategy has low chance to work.

\section{Definitions and statement of the main result}

A calibrated subaction  for $A=\log g$ is a  function $V$ such that

$$ \sup_{y\, \text{such that}\, f(y)\,=\,x}\,  \,\{\,V(y) \,+ \,\log g (y) \,-\, m ( \log g)\,\}\,= \, V( x)  . $$

If the maximizing probability is unique the calibrated subaction is unique, up to  an additive constant (see \cite{Bousch1} (Lemme C) or \cite{BLT} (Proposition 5)).
\bigskip

In Statistical Mechanics the parameter $\beta\geq 0$ is associated to the inverse of the temperature.
Then, one can say that the limit probability of $\mu_\beta$, when $\beta \to \infty$, corresponds to the case of the equilibrium at temperature zero (see \cite{BLT}, \cite{BLL}).
We refer the reader to \cite{Jenkinson1} \cite{CG} \cite{HY} \cite{Mo} \cite{GL1} \cite{Le} \cite{BQ} \cite{CH} and \cite{CLT} for general references and definitions on Ergodic Optimization.

We recall that the   Bernoulli  space is the set
$\{1,2,...,d\}^\mathbb{N}=\Sigma$. A general element $w$ in
$\Sigma$ is denoted by $ w=(w_0,w_1,..,w_n,..)$.

In section 6 we will assume that $d=2$.

We denote $\hat{\Sigma}$ the set $\Sigma \times [0,1]$ and $\psi_i$
indicates the $i$-th inverse branch of $f$. We also denote by
$\sigma$ the shift on $\Sigma$. Finally,  $\T^{-1}$ is the backward
shift on $\hat{\Sigma}$ given by $\T^{-1}(w,x) = ( \sigma (w),
\psi_{w_0} (x)).$ In order to analyze the analytic properties of the dynamics of $f$ we have to consider the underlying dynamics of the inverse branches, and, then it is natural to consider the extend system $\T$ acting on $\hat{\Sigma}$. This kind of approach (in some sense) appears also in the study of the scaling function (see \cite{PU}).

\begin{defi}
Consider $A:[0,1]\to \mathbb{R}$ H\"older.
We say that  $ W: \hat{\Sigma} \to \mathbb{R}$ is a involution kernel for $A$, if
there is a H\"older function  $A^*:\Sigma \to \mathbb{R}$ such
that
$$A^*(w)= A\circ \T^{-1}(w,x)+ W \circ \T^{-1}(w,x) -
W (w,x).$$

We say that $A^* $ is a dual potential of $A$, or, that $A$ and $A^*$ are in involution.
\end{defi}

Above we denote $A(x)$ and $A^*(w)$ to stress the difference of the domains of each one. Note that $A\circ \T^{-1}(w,x)= A(\psi_{w_0} (x)).$

\begin{rem} \label{re}
In order to show $W$ is an involution kernel for $A$ we just have to show that
$A\circ \T^{-1}(w,x)+ W \circ \T^{-1}(w,x) -
W (w,x)$ is continuous and just depends on $w$ (see \cite{BLT}).
\end{rem}

Given a  H\"older potential $A=\log g$, the existence and properties of an associated  H\"older continuous involution kernel $W$ was
presented in \cite{BLT}, for the purpose of getting a Large Deviation Principle.

We show here the existence of $W(w,x)$, $x \in [0,1], w \in
\{1,...,d\}^\mathbb{N}$, which is an analytic involution kernel for
$A(x)=\log g(x)$, and a relation with the dual potential
$A^*(w)=(\log g)^*(w)$ defined in the Bernoulli space
$\{1,...,d\}^\mathbb{N}$. In this case we have  $W:
\{1,...,d\}^\mathbb{N}\times [0,1] \to \mathbb{R}$, and, by analytic
we mean: for each $w \in \{1,...,d\}^\mathbb{N}$ fixed, the function
$W(w,.)$ has a complex analytic extension to a neighborhood of
$[0,1]$.
\bigskip


Here we assume that the maximizing probability for $A$ is unique which implies the maximizing probability for $A^*$ is also unique (see \cite{BLT}). We denote by $V^*$ the calibrated subaction for $A^*$.

We denote by $I^*$ the deviation function for $A^*$ (see \cite{BLT}).

Suppose $V$ is the limit of  a subsequence $\frac{1}{\beta_n}\, \log \phi_{\beta_n}$, where $\phi_{\beta_n}$ is an eigenfunction
of the Ruelle operator for $\beta_n A$. Suppose $V^*$ is obtained in an analogous way for $A^*$.  Then, there exists $\gamma$ such that
\begin{equation}\label{exp}
\gamma+ V(x ) =
\sup_{ w \in \Sigma}\, [\,W(w, x) - V^* (w)- I^*(w)\,].
\end{equation}

This expression has interesting relations with the additive eigenvalue problem (see \cite{Ba} \cite{CD})

  We consider on $\Sigma=\{0, ...,d-1\}^\mathbb{N}$ the lexicographic order. We will consider, by technical reasons, the case where $f:(0,1) \to (0,1)$ has positive derivative. In the most of the cases we will consider, $d=2$, in order to avoid an unnecessary heavy notation.


Following \cite{LOT} we define:

\begin{defi} \label{rom} We say a  continuous  $G: \hat{\Sigma}=\Sigma\times [0,1]
 \to \mathbb{R}$ satisfies the twist condition on
$ \hat{\Sigma}$, if  for any $(a,b)\in
\hat{\Sigma}=\Sigma\times[0,1] $ and $(a',b')\in\Sigma\times [0,1]
$, with $a'> a$, $b'>b$, we have
\begin{equation}
G(a,b) + G(a',b')  <  G(a,b') + G(a',b).
\end{equation}
\end{defi}

\begin{defi} We say a continuous $A: [0,1] \to \mathbb{R}$   satisfies the twist condition, if some  of its involution kernels
satisfies the twist condition.

\end{defi}

Note that if the above is true for some involution kernel it will be also true for any
involution kernel (see \cite{LOT}).

We will assume the twist condition for $W$ (sometimes called supermodular condition as in section 5.2 in \cite{Mi}), which is a  very
natural assumption for the cost in optimization problems (see
\cite{Ba} and the Monge condition in \cite{Del}).

The twist condition will assure that for the lexicographic order in
$\Sigma$ (can be any lexicographic order) the multi-valuated function $x \to w(x)$ is monotonous  decreasing (to be proved later). In
the case $f$ is a two to one map (that is, $d=2$),  a special point, which will be
called turning point, will play an important role.

The turning point $c$ (see fig. 1)  is defined by
$$c=\sup\{x\, |\, \, w(x)=(1\, w_1\, w_2 ...)\,\,\text{for some the possible }\,\,w(x)\}.$$

All results before section 6 are for the general case of a finite $d$. However, our main result, which is  Theorem \ref{maincor}, is for the case $d=2$. It claims that:

\begin{thm}

Assume that

a) the maximizing probability $\mu_{\infty\, A}$ is unique,

b)  has support in
a periodic orbit,

c)  $\log g$ is twist.

If  $d=2$ and the turning point $c$ is eventually periodic for $f$, then the calibrated sub-action $V:[0,1]\to
\mathbb{R}$ for the potential $A=\log g$ is piecewise analytic, with
a finite number of domains of analyticity.
\end{thm}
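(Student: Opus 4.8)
The plan is to read off the calibrated subaction $V$ from the variational formula \eqref{exp}, namely
\[
\gamma + V(x) = \sup_{w\in\Sigma}\,[\,W(w,x) - V^*(w) - I^*(w)\,],
\]
and to show that under hypotheses (a)--(c) with $d=2$ the supremum is attained, for each $x$, at one of finitely many ``optimal'' points $w=w(x)\in\Sigma$, with the selection $x\mapsto w(x)$ being piecewise constant. Once this is known, on each interval where $w(x)$ is constant equal to some $w^\ast$ we have $\gamma + V(x) = W(w^\ast,x) - V^*(w^\ast) - I^*(w^\ast)$, and since $W(w^\ast,\cdot)$ is real analytic with a complex analytic extension to a neighborhood of $[0,1]$ (the analytic involution kernel whose existence is asserted in the text), $V$ is analytic on that interval. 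Finitely many such intervals then give the conclusion.

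The key steps, in order, are the following. First, I would use the twist condition on $W$ together with the lexicographic order on $\Sigma$ to establish that the multivalued optimal-selection map $x\mapsto w(x)$ is monotone decreasing (the text already announces this as ``to be proved later''); monotonicity forces the set of $x$ sharing a given optimal $w$ to be an interval, and forces the discontinuity/branch structure to be controlled by the turning point $c$. Second, I would invoke hypotheses (a) and (b): since $\mu_{\infty A}$ is unique and supported on a periodic orbit, the dual maximizing measure $\mu_{\infty A^*}$ is likewise unique and supported on a periodic orbit, hence $I^*$ vanishes exactly on (the closure of) that periodic orbit and the calibrated subaction $V^*$ for $A^*$ is itself determined by a finite combinatorial datum. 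In particular $V^*(w) + I^*(w)$ is, up to sign, comparable to a finite-to-one ``cost'' that is $+\infty$ (effectively) away from finitely many relevant cylinders, so the sup over $w\in\Sigma$ in \eqref{exp} reduces to a sup over a finite set of candidate words determined by the periodic orbit and its preimages. Third, I would use the eventual periodicity of the turning point $c$ under $f$: this is what guarantees that the orbit of $c$ meets the relevant periodic data in finitely many steps, so that the partition of $[0,1]$ into intervals of constant $w(x)$ is \emph{finite} rather than merely locally finite. Fourth, on each such interval, substitute the constant optimal word and conclude analyticity of $V$ from analyticity of $W(w,\cdot)$.

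The main obstacle I expect is the third step: controlling the \emph{finiteness} of the partition. Monotonicity of $x\mapsto w(x)$ and the twist condition give local structure and a single turning point $c$ where the ``branch'' of $w(x)$ switches, but iterating — i.e. understanding the preimage structure $f^{-1}(c), f^{-2}(c),\dots$ and how the optimal $w(x)$ changes as $x$ crosses these preimages — could a priori generate infinitely many breakpoints. The hypothesis that $c$ is eventually periodic is precisely designed to collapse this into a finite tree; making that rigorous requires showing that once the orbit of $c$ lands on the (periodic) support of $\mu_{\infty A}$, no further genuinely new breakpoints are created, because the optimal word $w(x)$ becomes the one tracking the periodic orbit. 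A secondary technical point is verifying that the sup in \eqref{exp} is genuinely \emph{attained} (not just approached) for every $x$, including at the breakpoints $x=c, x\in f^{-1}(c),\dots$; this should follow from upper semicontinuity of $w\mapsto W(w,x)-V^*(w)-I^*(w)$ together with the effective finiteness of the candidate set, but the behavior at the turning point needs care since there $w(x)$ is multivalued and one must check the two one-sided limits both yield the same value $\gamma + V(c)$, consistent with continuity of $V$.
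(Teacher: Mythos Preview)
Your overall architecture is right — use the variational formula, show the optimal selector $x\mapsto w(x)$ is monotone via the twist condition, prove it is piecewise constant with finitely many pieces, and conclude from analyticity of $W(w,\cdot)$ for fixed $w$. But the mechanism you propose for step three is inverted, and this is a genuine gap.

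You say the breakpoints arise from the \emph{preimage} structure $f^{-1}(c), f^{-2}(c),\dots$, and that eventual periodicity of $c$ should ``collapse this into a finite tree.'' In fact the breakpoints of the partition are the \emph{forward} iterates $c, f(c), f^2(c),\dots$. The reason is the Fundamental Relation, which gives that $\mathbb{T}^{-1}$ preserves optimal pairs: if $(w,x)$ is optimal then so is $(\sigma(w),\psi_{w_0}(x))$. Iterating, the symbol $w_k$ of the optimal $w(x)$ is determined by whether $\psi_{w_{k-1}}\cdots\psi_{w_0}(x)$ lies to the left or right of $c$. Consequently, for $x<x'$ the optimal words $w(x)$ and $w(x')$ first differ at position $k$ precisely when the images under $\psi_{w_{k-1}}\cdots\psi_{w_0}$ straddle $c$, which — applying $f^k$ — is equivalent to $f^k(c)\in[x,x']$. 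This is the content of the paper's ``characterization of optimal change'' lemma: $w(x)\neq w(x')$ if and only if some $f^n(c)$ lies in $[x,x']$. Hence the intervals $B(w)=\{x: w(x)=w\}$ are exactly the gaps between consecutive points of the forward orbit $\{f^n(c):n\ge 0\}$, and eventual periodicity of $c$ means this set is finite, giving finitely many analytic pieces directly. Your preimage picture would never yield finiteness: the set $\bigcup_n f^{-n}(c)$ is always infinite (indeed dense), and eventual periodicity of $c$ says nothing about it.

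Two smaller points. First, your step two — reducing the supremum to finitely many candidate $w$ by arguing that $I^*$ is ``effectively $+\infty$'' off finitely many words tied to the periodic orbit — is not how the argument runs and is not correct as stated: $I^*(w)<\infty$ can hold for a countable (even uncountable, without further hypotheses) set of $w$. Finiteness of the relevant $w$'s is a \emph{consequence} of the turning-point analysis above, not an input to it. Second, your reading of ``$c$ eventually periodic'' as ``the orbit of $c$ lands on the support of $\mu_{\infty A}$'' is too restrictive: the periodic orbit captured by $c$ need not be the maximizing one.
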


There are several  examples where the hypothesis of the theorem are true (see section 7). We show that expression (\ref{exp}) above can be used to find explicit calibrated subactions in some cases (see Example \ref{ex1} in section 7).

\bigskip
{\bf Motivation and discussion on assumptions}
\bigskip

As a motivation for the study of the above problem we mention the papers \cite{BKRU} \cite{T1}
which consider the fat attractor.  For a fixed potential $A=\log g$ (called $\tau$ in the notation of \cite{BKRU}) there exist an extra-parameter
$\lambda$. In \cite{BKRU} it is shown that the boundary of this attractor is related the graph of a certain function $u_\lambda$. When $\lambda \to 1$, we have that this $u_\lambda$ (normalized)
converges to a calibrated subaction for $A$ (see \cite{Bousch-walters} \cite{BCLMS}).
One of the conjectures presented in \cite{BKRU}, when translated to our language, claims that, if $A$ is $C^2$-generic, then the $u_\lambda$ is piecewise differentiable. The function denoted by $S$ in \cite{T1} corresponds to the involution kernel here. The techniques we consider here, namely, duality and the involution kernel, will be used on that context in a forthcoming paper in order to understand the unstable manifold of some special points in the boundary of the attractor.

In the setting of the fat attractor   \cite{T1} \cite{BKRU} the turning point corresponds to the projection on $S^1$ of the intersection
of certain unstable manifolds in the boundary of the attractor \cite{LO}.

\bigskip

The theory described here can be applied  when $\log g(x)=-\log f'(x)$. In that case we relate the involution kernel $W$ to the scaling function (see \cite{PU} \cite{GJQ} \cite{MM}). The dual potential $A^*$  of $A=-\log f'(x)$ will be the scaling function. The dual relation, via the involution kernel,  we consider here is a generalization of the relation of $-\log f'(x)$ and the scaling function. More precisely, in this case,   $e^{W(w,x)}$, $(w,x) \in \Sigma\times [0,1]$,   coincides with  the function $|D \psi_w
(x)|$ on the variables $(w,x)$ of \cite{MM}.

\bigskip

The twist condition (see  \cite{Gol}) on the involution kernel (it is a condition
that depends just on $A$) plays the same role in Ergodic Transport Theory than  the convexity hypothesis in Aubry-Mather Theory (see \cite{Mat} \cite{CI} \cite{Fa} \cite{Man}). Here we will assume this hypothesis which
was first considered   in \cite{LMST} and \cite{LOT}.
Examples of potentials $A$ such that the corresponding involution
kernel satisfies the twist condition appear there. The twist
condition is an open property in the variation of the analytic
potential $A=\log g$ defined in a fixed open complex neighborhood of
the interval $[0,1]$.

It will be clear from our proof that in the case the support of the maximizing probability is not a periodic orbit (a Cantor set for instance), then, one gets an infinite number of distinct  domains of analyticity. In this case the turning point will not be eventually periodic.

We point out that a main conjecture in Ergodic Optimization claims that generically (in the H\"older topology) on the potential $A$ the maximizing probability has support in a periodic orbit (see \cite{CLT} for related results). Therefore, the assumption that the maximizing probability is a periodic orbit makes sense.

We point out that in the case $f$ reverses orientation (like ,$-2 x $ (mod $1$)), then there is no potential $A=\log g$ which is twist for the dynamics on $\Sigma\times [0,1]$.  A careful analysis (for different types of Baker maps) of when it is possible for $A$ to be  twist for a given dynamics $f$ is presented in \cite{LOT}. We will not consider this case here.

\bigskip

{\bf Strategy of the proof}
\bigskip

 By compactness, for each  $x$  there exists {\bf  at least one}
$w(x)$ such that
$$\gamma+ V( x ) =
\sup_{ w \in \Sigma}\, [\,W(w, x) - V^* (w)-
I^*(w)\,]=
$$
$$
[\,W(w(x) , x ) - V^* (w(x)   )- I^*(  w(x)) \,].$$

For each fixed $w$ we will prove that $W(w,x)$ is
analytic in $x$ (in a complex neighborhood of $[0,1]$).

As  for a fixed $w$, $\,W( w ,x )$ is analytic on $x$ (see
corollary \ref{ana}), a result on piecewise analyticity of $V$  is
obtained if we are able to assume conditions to assure that $w(x)\in \Sigma$ is
locally constant as a function of $x\in[0,1]$ (up to a finite set of points $x$).
In some case there exist just a finite number of possible points $w(x)$ (see fig 2).


\vspace{0.3cm}

{\bf Section by section description of the proof}
\bigskip

In Section 2 we present some more  basic definitions and in Section 3 we show
the existence of a certain function $h_w(x)=h(w,x)$ which  defines
by means of $\log ( h(w,x))$  an involution kernel for $\log g$. In
Section 4 we present some basic results in Ergodic Optimization,
and, we   describe the main strategy for getting the piecewise
analytic sub-action $V$. Section 4 shows the relation of the scaling
function (see \cite{SS} \cite{GJQ}) with the involution kernel, and,
the potential $\log g=-\log f'$. In fact, we consider in this
section a more general setting considering any given  potential
$\log g$.  A main point we will need later is the proof of the analyticity on the variable $x$ for $w$ fixed.  This is the purpose of Section 5. In Section 6 (and also 4) we consider Gibbs states for the
potential $\beta \log g$, where $\beta$ is a real parameter. In
Section 7 and 8 we show
the existence of the piecewise complex analytic calibrated
sub-action.  The main idea is to get the piecewise analyticity for the subaction from the analyticity of the involution kernel. We need in this moment a finiteness condition for the set optimal points.
The  turning point will play an essential rule in this analysis.  In the end of this section an example shows that using our technique it is possible to  get explicit computations and to be able to exhibit a calibrated subaction in some complicated examples.

\bigskip

\begin{center}
\includegraphics[scale=0.63,angle=0]{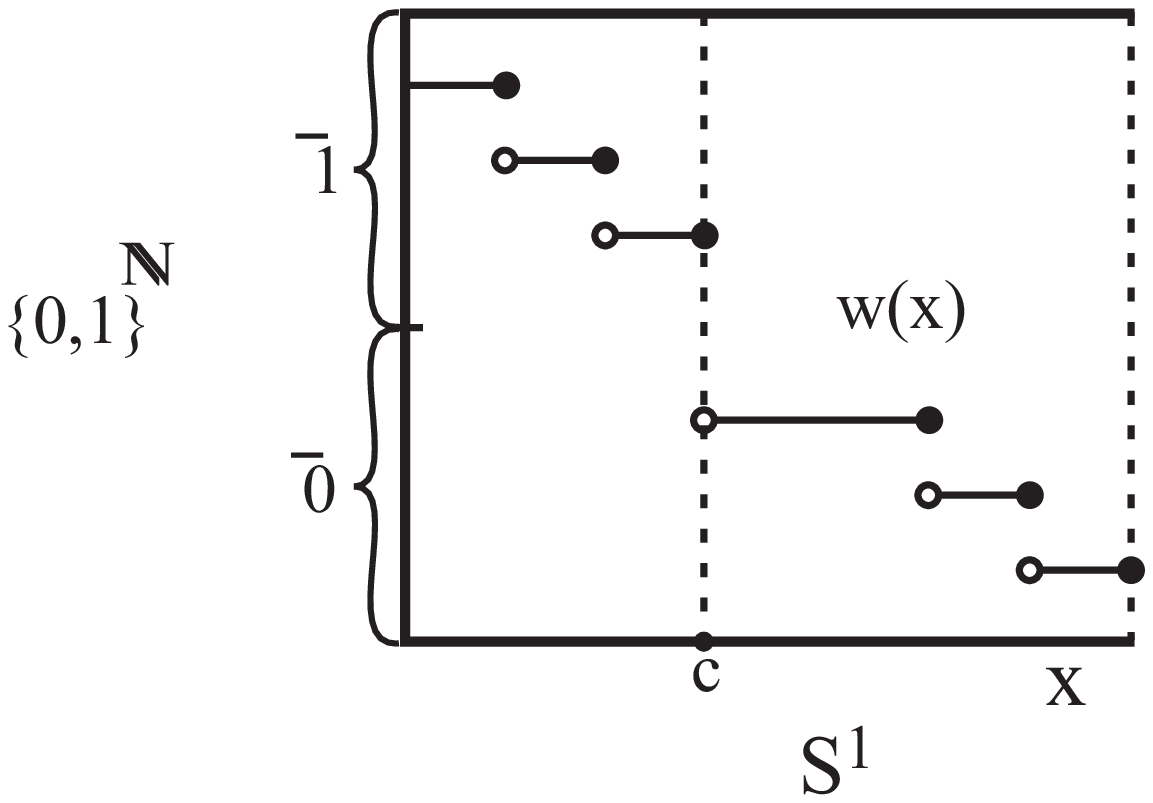}\\
\small{fig. 1 \,\, The turning point $c$}\\
\end{center}

Finally, in the
last section we present a result of independent interest for the
case where the maximizing probability is not a periodic orbit:  we
consider properties of the involution kernel for a generic $x$.

\bigskip

We will use here some ideas from Transport Theory (see \cite{Vi1} \cite{Vi2}) to show our main result. We point out that, in principle, this area has no dynamical content. But, considering a cost function (the involution kernel to be defined later) with dynamical properties one can obtain interesting properties in Ergodic Theory. The fundamental  relation (Proposition \ref{fun}) and a subsequent lemma show that the underlying dynamics spread optimal pairs for the dual Kantorovich problem. This is a special attribute of Ergodic Transport Theory. In \cite{LOT} the main issue was the understanding of points in the support of the maximizing measure. Here we focus on properties outside the support.

After this paper was written we discovered  that some of the ideas described in section 3 appeared in some form in \cite{PR} \cite{GJQ} (but, as far as we can see, not exactly like here).

\section{Onto analytic expanding maps}

 We will  consider a complex analytic extension of the real Ruelle operator $P_{\log g}$ and  general references for this topic are \cite{PP} \cite{Ru} \cite{Pol2} page 14. We describe briefly below the extension of the Ruelle operator to an action in complex functions defined in a small neighborhood of $I$.

The results we state below can be found basically  in \cite{Pol} section 2 pages 165-167 adapted to the present situation.

Denote $I=[0,1]$.  We say that $f\colon I\rightarrow I$ is an onto  map if  there exists a finite partition of $I$ by closed intervals
\begin{equation}\label{part} \{ I_i \}_{i\in \{1,2,..,d\}},\end{equation}
with pairwise disjoint interiors, such that \\
\begin{itemize}
\item[-]  For each $i$ we have that $f(I_i)=I$,\\
\item[-]  $f_i$ is monotone on each $I_i$.\\
\end{itemize}

\begin{defi}
We say that $f$ is expanding if $f$ is $C^1$ on each $I_i$ and there exists $\tilde{\lambda} > 1$ such that
$$\inf_i \inf_{x \in I_i} |Df(x)|\geq \tilde{\lambda}.$$

\end{defi}

Denote by
$$\psi_i\colon I \rightarrow I_i$$
the inverse branch of $f$ satisfying $$\psi_i\circ f(x)=x$$ for each $x \in I_i$.

We will say that an expanding onto map is analytic if there exists an simply connected, precompact  open set $O\subset \mathbb{C}$, with $I \subset O$, such that,  each $\psi_i$ has a univalent extension
$$\psi_i\colon O\rightarrow \psi_i(O).$$

{\bf We assume we can  choose} $O$ such that
\begin{itemize}
\item[-]  $\psi_i$ has a continuous extension
$$\psi_i\colon \overline{O}\rightarrow \mathbb{C}.$$
\item[-] We have
$$\psi_i(\overline{O}) \subset O.$$
\item[-] Moreover
$$\sup_i \sup_{x \in O}  |D\psi_i(x)| \leq  \lambda=\frac{\tilde{\lambda}^{-1}+1}{2}< 1.$$
\end{itemize}
Consider a finite word
$$\gamma =(i_1,i_2,\dots,i_k),$$
where $i_j \in \{1,2,..,d  \}$. Denote $|\gamma|=k$.
Define the univalent maps
$$\psi_{\gamma} \colon O \rightarrow \mathbb{C}$$
as
$$\psi_{\gamma} =  \psi_{i_k}\circ \psi_{i_{k-1}} \circ \cdots \circ \psi_{i_1},$$
We will denote
$$I_{\gamma}:= \psi_{\gamma}(I).$$
Given either an infinite word
$$\omega=(i_1,i_2,\dots,i_k,\dots) \in \Sigma:=\{1,2,..,d  \}^{\mathbb{N}},$$
or a finite word with $|\omega|\geq k$, define  its $k$-truncation as
$$\omega_k=(i_1,i_2,\dots,i_k).$$
Note that for $k\geq 1$
$$\psi_{\omega_{k}}=\psi_{i_{k}}\circ \psi_{\omega_{k-1}}.$$
For every finite word $\gamma$ we can define the cylinder
\begin{equation}\label{cyl} C_\gamma= \{ \omega \in \{1,2,..,d  \}^{\mathbb{N}}\colon \ \omega_{|\gamma|}=\gamma\}.\end{equation}

\bigskip

\begin{center}
\includegraphics[scale=0.4,angle=0]{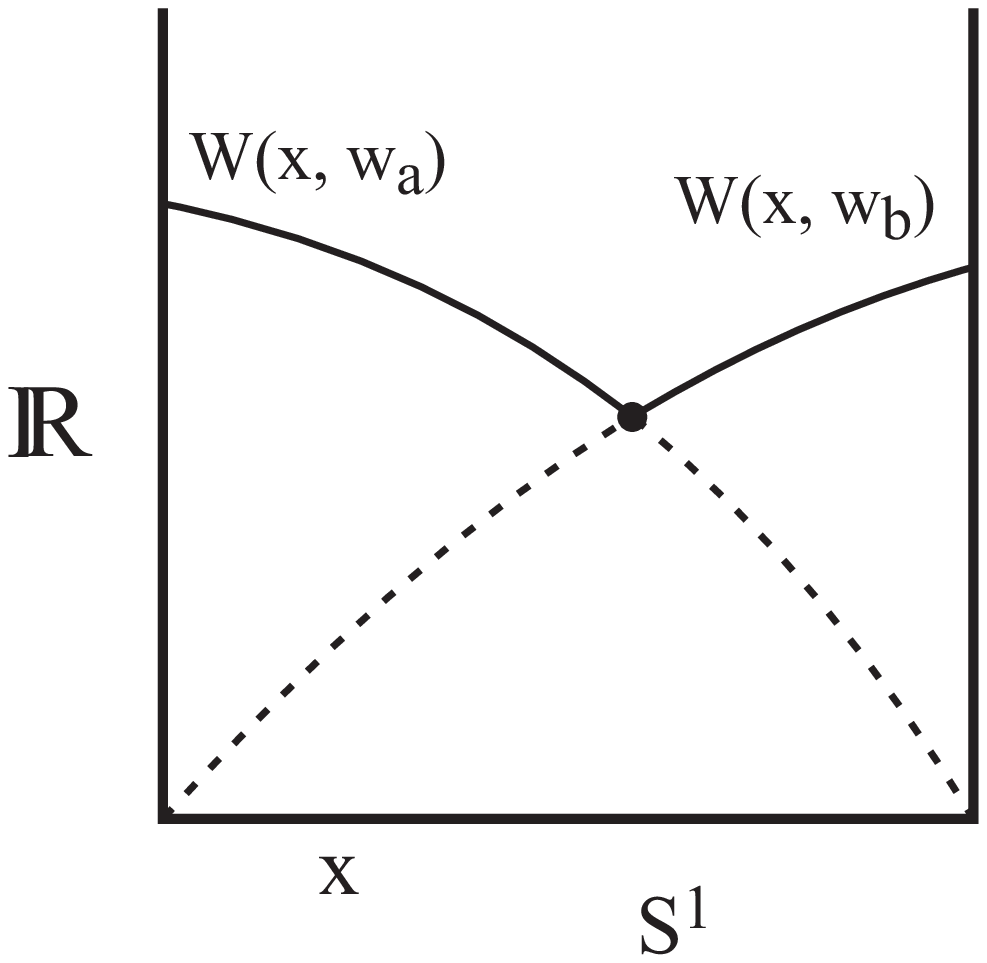}\\
\small{Fig 2) The graph of an specific example of a  piecewise analytic subaction associated to a maximizing probability which is an orbit of period $2$. It is the maximum of $W(.\,,w_a)$ and $W(.\,,w_b)$, where  $\{w_a,w_b\}\subset\{1,2\}^\mathbb{N}$ is an orbit of period $ 2$ for the shift.}\\
\end{center}

\section{Analytic potentials, spectral projections and invariant densities}
\label{anapot}

Some of the results presented in this section extend some of the ones in \cite{MM}. We say that a function
$$g\colon \cup_i \ \text{int} \ I_i \rightarrow \mathbb{R}$$
is a complex analytic potential if there are complex analytic  functions $g_i\colon \psi_i (O)\rightarrow \mathbb{C}$  such that
\begin{itemize}
\item[-] The functions $g_i$ and $g$ coincides in the interior of $I_i$.
\item[-] The functions $g_i$ have a continuous extension to $\overline{\psi_i(O)}$.
\item[-] There exists $\theta < 1$ such that
$$0< \inf_{x \in \overline{\psi_i(O)}} |g_i'(x)|\leq \sup_{x \in \overline{\psi_i(O)}} |g_i'(x)|\leq \theta.$$
\item[-] We have
$$g_i(\mathbb{R}\cap \psi_i(O)) \subset \mathbb{R}^+.$$
\end{itemize}

Denote
$$\tilde{h}_i(x)= g_i(\psi_i(x)).$$
For every finite word $\gamma$ we will define by induction on  the lengths of the words the function
$$\tilde{h}_\gamma\colon O \rightarrow \mathbb{C}$$
in the following way: Let $\gamma=(i_1,i_2,\dots,i_{k+1})$. If $|\gamma|=k+1=1$ define $\tilde{h}_{\gamma}(x)= g_{i_1}(\psi_{i_1}(x))$, otherwise
$$\tilde{h}_{\gamma}(x)= \tilde{h}_{\gamma_{k}}(x)\cdot  g_{i_{k+1}}\circ \psi_{\gamma_{k+1}}(x)=  \tilde{h}_{\gamma_{k}}(x)\cdot  \tilde{h}_{i_{k+1}}\circ \psi_{\gamma_{k}}(x).$$

As the functions we consider have complex  analytic extensions, then, $\tilde{h}_{\gamma}$ is complex analytic, but it is real when restricted to the interval $I$.

\begin{defi}
Define the Perron-Frobenious operator
$$P_{\log g}\colon C(I)\rightarrow C(I).$$
as
$$(P_{\log g} \ q)(x) =\sum_i  \tilde{h}_i(x) \  q(\psi_i(x)).$$
Note that
$$(P_{\log g}^n \ q)(x) =\sum_{|\gamma|=n}  \tilde{h}_\gamma(x) \  q(\psi_\gamma(x)).$$

\end{defi}

From \cite{PP} there exists a probability  $\tilde{\mu}$,  with no atoms and  whose support is $I$, a H\"older-continuous and positive function $v$  and $\alpha > 0$ such that
\begin{equation} \label{v} P_{\log g}^n v = \alpha^n v, \  \tilde{\mu}(v)=1,\end{equation}
and
$$\tilde{\mu} (P_{\log g}^n q)= \alpha^n \ \tilde{\mu}(q)$$
for every $q \in C(I)$. Let $v \tilde{\mu}$ be the measure absolutely continuous with respect to $\tilde{\mu}$ and whose Radon-Nikodyn derivative with respect to $\tilde{\mu}$ is $v$, that is, for every Borel set $A$ we have
$$v \tilde{\mu}(A)= \int_A v(x) \ d\tilde{\mu}(x).$$
Then the probability  $v \tilde{\mu}$ is $f$-invariant.
Let $\omega$ be either an infinite word $\omega=(i_1,i_2,\dots,i_{k},\dots)$ or a finite word with $|\omega|\geq k+n$. Then

\begin{equation}\label{relacao0}
\tilde{\mu} (I_{\omega_{k+n}})=\frac{1}{\alpha^n} \int_{I_{\omega_{k} }}  \tilde{h}_{\omega_{n+k}-\omega_k}(x)   \ d\tilde{\mu}(x),
\end{equation}

where $\omega_{n+k}-\omega_k$ is the word
$$(i_{k+1},i_{k+2},\dots,i_{k+n}).$$

The above expression is sometimes called the conformality of the probability $\tilde{\mu}$.

For every finite word $\gamma$, define
$$h_{\gamma}= \frac{\tilde{h}_{\gamma} }{\alpha^{|\gamma|} \tilde{\mu} (I_{\gamma})}.$$

 $h_{\gamma}$ is complex analytic but it is real when restricted to $I$.

Note that for $|\omega|\geq k+1$
\begin{equation}\label{rec2}  h_{\omega_{k+1}}(x)=  h_{\omega_{k}}(x)\cdot  g_{i_{k+1}}\circ \psi_{ \omega_{k+1}}(x)\frac{\tilde{\mu}(I_{\omega_{k}})}{\alpha \ \tilde{\mu}(I_{\omega_{k+1}})}=  h_{\omega_{k}}(x)\cdot  \tilde{h}_{i_{k+1}}\circ \psi_{ \omega_{k}}(x)\frac{\tilde{\mu}(I_{\omega_{k}})}{\alpha \ \tilde{\mu}(I_{\omega_{k+1}})}.\end{equation}

Let $U\subset \mathbb{C}$ be a pre-compact open set. Consider the Banach space $\mathcal{B}(U)$ of all complex analytic functions
$$h\colon U\rightarrow \mathbb{C}$$
that have  a continuous extension on $\overline{U}$, endowed with the sup norm.

The following lemma (see theorem 2.3.2 in
page 15 in \cite{MF}) is a well-known result on holomorphic functions which is very much used in complex dynamics \cite{PU} \cite{Mil}.

\begin{lem}\label{subseql} If $U,U_1 \subset \mathbb{C}$ are relatively compact open sets such that  $\overline{U}_1 \subset U$ then the inclusion $\imath \colon \mathcal{B}(U)\rightarrow  \mathcal{B}(U_1)$ is a compact linear operator. So every bounded sequence $f_n \in \mathcal{B}(U)$ has a subsequence $f_{n_i}$ such that $f_{n_i}$ converges uniformly on $\overline{U}_1$ to a continuous function that is complex analytic in $U_1$. Moreover if  $U_n$  is a sequence of open sets  such that  $\overline{U}_n \subset U$ and
$$\cup_n U_n =U,$$
we can use a diagonal  argument to show that we can find a subsequence $f_{n_i}$ and a bounded complex analytic function $f$ on $U$ such  that $f_{n_i}$  converges uniformly to $f$ on each  compact subset of $U$.
\end{lem}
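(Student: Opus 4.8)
The plan is to deduce everything from the classical fact that a locally bounded family of holomorphic functions is normal (Montel's theorem), which in turn rests on Cauchy's integral formula and the Arzel\`a--Ascoli theorem. First I would take an arbitrary bounded sequence $f_n \in \mathcal{B}(U)$, say $\sup_n \sup_{\overline{U}} |f_n| \le M$. For any compact $K \subset U$ let $r = \tfrac12 \mathrm{dist}(K,\partial U) > 0$; applying the Cauchy estimate on circles of radius $r$ inside $U$ gives $\sup_{z \in K} |f_n'(z)| \le M/r$ for all $n$, so the family $\{f_n\}$ is uniformly Lipschitz, hence equicontinuous, on every compact subset of $U$.

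Next, since $U_1$ is relatively compact and $\overline{U}_1 \subset U$, the set $\overline{U}_1$ is itself a compact subset of $U$. Applying Arzel\`a--Ascoli on $\overline{U}_1$ (uniform boundedness by $M$ together with the equicontinuity just established) produces a subsequence $f_{n_i}$ converging uniformly on $\overline{U}_1$ to a continuous limit $f$. A uniform limit of holomorphic functions is holomorphic, so $f$ is complex analytic on $U_1$ and continuous on $\overline{U}_1$, i.e. $f \in \mathcal{B}(U_1)$, and $f_{n_i} \to f$ in the sup norm of $\mathcal{B}(U_1)$. Since $\imath$ is obviously linear and bounded ($\norm{\imath h} = \sup_{\overline{U}_1}|h| \le \sup_{\overline{U}}|h|$ because $\overline{U}_1 \subset U$), the above shows that $\imath$ maps bounded sequences into sequences admitting convergent subsequences, which is precisely compactness of the operator $\imath$.

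For the final assertion I would first replace each $U_n$ by $U_1 \cup \dots \cup U_n$, so that without loss of generality the $U_n$ are increasing, satisfy $\overline{U}_n \subset U$, and still have union $U$; then any compact $K \subset U$, being covered by finitely many of the original sets, lies in some $U_N$. Now iterate the extraction from the previous paragraph: pull out a subsequence converging uniformly on $\overline{U}_1$, from it a further subsequence converging uniformly on $\overline{U}_2$, and so on, and take the diagonal sequence; this converges uniformly on each $\overline{U}_n$, hence on every compact subset of $U$, to a function $f$ that is holomorphic on $U$ with $|f| \le M$ there. I do not expect a real obstacle here: the only points needing a little care are the bookkeeping of indices in the diagonal argument, and the observation that it is the hypothesis $\overline{U}_1 \subset U$ (making $\overline{U}_1$ a compact subset of the domain of holomorphy) that converts mere local uniform convergence into convergence in the Banach-space norm of $\mathcal{B}(U_1)$.
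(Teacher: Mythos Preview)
Your proof is correct and follows the standard route via Cauchy estimates, Arzel\`a--Ascoli, and a diagonal extraction. The paper does not actually prove this lemma: it simply states it as ``a well-known result on holomorphic functions'' and refers the reader to theorem 2.3.2 in \cite{MF}, so there is no paper-proof to compare against beyond noting that your argument is exactly the classical one such a citation points to.
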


\begin{thm}\label{hbeta} There exists $K > 0$ with the following property: For every  infinite word $\omega$ the sequence  $h_{\omega_k}$ is  a Cauchy sequence in $\mathcal{B}(O)$. Let $h_\omega$ be its limit. For every $\omega$ and $x \in O$ we have
$$\frac{1}{K} \leq |h_{\omega}(x)|\leq K.$$
\end{thm}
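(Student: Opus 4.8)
The plan is to establish the Cauchy property via the recursion \eqref{rec2}, which expresses $h_{\omega_{k+1}}$ as $h_{\omega_k}$ times a correction factor, and then control that correction factor using distortion estimates for the inverse branches together with the conformality relation \eqref{relacao0}. Write $h_{\omega_{k+1}}(x) = h_{\omega_k}(x) \cdot R_k(x)$, where
$$R_k(x) = \tilde h_{i_{k+1}}\circ \psi_{\omega_k}(x)\cdot \frac{\tilde\mu(I_{\omega_k})}{\alpha\,\tilde\mu(I_{\omega_{k+1}})}.$$
First I would show that $R_k$ is uniformly close to $1$ on $O$, with an exponentially small error: $\sup_{x\in O}|R_k(x)-1| \le C\lambda^k$ for constants $C>0$, $\lambda<1$. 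To see this, note that by \eqref{relacao0} applied with $n=1$ to the word $\omega_{k+1}$ (starting the truncation at level $k$),
$$\alpha\,\tilde\mu(I_{\omega_{k+1}}) = \int_{I_{\omega_k}} \tilde h_{i_{k+1}}(y)\, d\tilde\mu(y),$$
so the ratio $\tilde\mu(I_{\omega_k})/(\alpha\,\tilde\mu(I_{\omega_{k+1}}))$ is exactly $\big(\tfrac{1}{\tilde\mu(I_{\omega_k})}\int_{I_{\omega_k}}\tilde h_{i_{k+1}}\,d\tilde\mu\big)^{-1}$, i.e. the reciprocal of the $\tilde\mu$-average of $\tilde h_{i_{k+1}}$ over $I_{\omega_k}$. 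Hence $R_k(x)$ is the ratio of the value of $\tilde h_{i_{k+1}}\circ\psi_{\omega_k}$ at $x$ to its $\tilde\mu$-average over $I_{\omega_k}$ after the change of variables by $\psi_{\omega_k}$. Since $|I_{\omega_k}| = |\psi_{\omega_k}(I)|$ shrinks geometrically (diam $\le C\lambda^k$ because $\sup_i\sup_O|D\psi_i|\le\lambda<1$), and $\tilde h_{i_{k+1}} = g_{i_{k+1}}\circ\psi_{i_{k+1}}$ is analytic hence Lipschitz on the relevant compact set, the oscillation of $\tilde h_{i_{k+1}}$ over $I_{\omega_k}$ is $O(\lambda^k)$; combined with the uniform lower bound on $|g_i|$ (coming from $g_i(\mathbb R\cap\psi_i(O))\subset\mathbb R^+$ and continuity on the compact set $\overline{\psi_i(O)}$, so $\inf|\tilde h_i|>0$), this gives $|R_k(x)-1|\le C\lambda^k$ on $O$.

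Granting this, the Cauchy property is immediate: $h_{\omega_{m}} = h_{\omega_{k}}\prod_{j=k}^{m-1}R_j$, and $\prod_{j\ge 1}R_j$ converges uniformly on $O$ because $\sum_j\|R_j-1\|_\infty<\infty$, with no factor vanishing (for $k$ large $|R_j|\ge 1/2$, and the finitely many initial factors are bounded away from $0$ and $\infty$ by analyticity and positivity). The difference $\|h_{\omega_m}-h_{\omega_k}\|_\infty$ is then bounded by $\|h_{\omega_k}\|_\infty$ times $|\prod_{j=k}^{m-1}R_j - 1|\le \exp(\sum_{j\ge k}C\lambda^j)-1 = O(\lambda^k)$, uniformly in $\omega$, which is what is needed. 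The limit $h_\omega\in\mathcal B(O)$ exists since $\mathcal B(O)$ is complete. Note that in carrying this out I should start the estimate from $h_{\omega_1}=g_{i_1}\circ\psi_{i_1}/(\alpha\,\tilde\mu(I_{i_1}))$, which is a fixed analytic function with values bounded above and below on $O$ uniformly in the finitely many choices of $i_1$.

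For the two-sided bound $\tfrac1K\le|h_\omega(x)|\le K$: from the product representation $h_\omega = h_{\omega_1}\cdot\prod_{j\ge 1}R_j$, we get $\log|h_\omega(x)| = \log|h_{\omega_1}(x)| + \sum_{j\ge1}\log|R_j(x)|$; the first term lies in a fixed bounded interval bounded away from $\pm\infty$ (independently of $\omega_1$, by the uniform estimates on the finitely many $h_{i_1}$), and the tail sum is bounded in absolute value by $\sum_{j\ge 1}|\log|R_j(x)||\le \sum_{j\ge1}2C\lambda^j$ once $C\lambda^j<1/2$, the finitely many earlier terms again being individually bounded; so $|\log|h_\omega(x)||\le \log K$ for a uniform $K$, uniformly over $\omega$ and $x\in O$. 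The main obstacle is the first step — making the distortion/oscillation estimate for $R_k$ genuinely uniform in $\omega$ and valid on the complex set $O$ (not just on $I$): one must ensure the Koebe-type bounds and the Lipschitz constant for $\tilde h_{i_{k+1}}$ are controlled on the nested images $\psi_{\omega_k}(\overline O)\subset O$, which is exactly where the standing hypotheses $\psi_i(\overline O)\subset O$ and $\sup_i\sup_O|D\psi_i|\le\lambda<1$ do the work, but the bookkeeping of constants through the change of variables by $\psi_{\omega_k}$ relative to $\tilde\mu$ requires care, particularly controlling $\tilde\mu$-averages over the sets $I_{\omega_k}$ from below (this uses that $\tilde\mu$ has no atoms and full support, plus conformality, to avoid degeneracy).
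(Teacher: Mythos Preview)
Your proposal is correct and follows essentially the same approach as the paper's proof: both use the recursion \eqref{rec2} to write $h_{\omega_{k+1}}/h_{\omega_k}$ as a multiplicative correction $R_k$, show $|R_k(x)-1|\le C\lambda^k$ uniformly on $O$ by combining the conformality relation \eqref{relacao0} (so that the normalizing factor is an average of $\tilde h_{i_{k+1}}$ over $I_{\omega_k}$) with the contraction bound $\operatorname{diam}\psi_{\omega_k}(O)\le C\lambda^k$ and the analyticity of $g_i\circ\psi_i$, and then deduce both the Cauchy property and the uniform two-sided bounds from the resulting convergent product. The only cosmetic difference is that the paper phrases the distortion estimate as a ratio bound $g_{i_{k+1}}\circ\psi_{i_{k+1}}(y)/g_{i_{k+1}}\circ\psi_{i_{k+1}}(x)=1+\delta_{k,x,y}$ for $x,y\in\psi_{\omega_k}(O)$ rather than your oscillation-versus-average formulation, but the content is identical.
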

\begin{proof} Indeed since
$$\psi_{i_{k+1}}(I_{\omega_{k}})=  I_{\omega_{k+1}},$$
we have  from the conformality property (\ref{relacao0})

\begin{equation}\label{relacao} \alpha \ \tilde{\mu}(I_{\omega_{k+1}})= \int_{I_{\omega_{k}}}     g_{i_{k+1}}\circ \psi_{i_{k+1}}(y)   \ d\tilde{\mu}(y).\end{equation}
Since $g_i$ is analytic and
$$diam \ \psi_{\omega_{k+1}}(O)\leq C \lambda^{k+1},$$
by Eq. (\ref{rec2}) we have that  if $\delta_{k,x,y}$ is defined by
$$   \frac{ g_{i_{k+1}}\circ \psi_{i_{k+1}}(y) }{ g_{i_{k+1}}\circ \psi_{i_{k+1}}(x) } = 1+\delta_{k,x,y},$$
then,
$$|\delta_{k,x,y}| \leq C \lambda^{k+1}.$$
for every $x, y \in  \psi_{\omega_{k}}(O)$. Here $C$ does not depend on either $x,y \in O$, $k\geq 1$, or  $\omega$. In particular,  if $\tilde{\delta}_{k,x}$ is defined by
$$g_{i_{k+1}}\circ \psi_{ \omega_{k+1}}(x)\frac{\tilde{\mu}(I_{\omega_{k}})}{\alpha \ \tilde{\mu}(I_{\omega_{k+1}})} =1+\tilde{\delta}_{k,x},$$
then, by conformality of $\tilde{\mu}$ and the usual bounded distortion argument (for instance \cite{Man1} page 169)
$$|\tilde{\delta}_{k,x}|\leq  C \lambda^{k}.$$
for $x \in O$. This implies that for $m > n$, if $\epsilon_{n,m}$ is defined by
$$\frac{h_{\omega_m}(x)}{h_{\omega_n}(x)}=1 + \epsilon_{n,m},$$
then,
\begin{equation} \label{estmn} |\epsilon_{n,m}|\leq C_1\lambda^n\end{equation}
for some $C_1$. Here $C_1$ does not depend on  $x,y \in O$, $k\geq 1$, or  $\omega$.

Let $m_0$ large enough such that $C_1\lambda^{m_0}  < 1$. Then
$$ \inf_{y \in O, |\gamma| < m_0} |h_{\gamma}(y)| \prod_{k=m_0}^\infty (1-  C_1\lambda^k) \leq |h_{\omega_k}(x)|\leq \sup_{y \in O, |\gamma| < m_0} |h_{\gamma}(y)| \prod_{k=m_0}^\infty (1+ C_1\lambda^k)$$
for every $x \in O$, infinite word $\omega$ and $k\geq 1$.
In particular there exists $K > 0$ such that
\begin{equation}\label{estul}   \frac{1}{K} \leq |h_{\omega_k}(x)|\leq K\end{equation}
for every $k\geq 1$, $x \in O$ and infinite word $\omega$. The family $h_{\omega_k}$ is equicontinuous. Indeed,  by  estimate  (\ref{estmn})  we have that
$$\frac{h_{\omega_m}(x)- h_{\omega_n}(x)}{h_{\omega_n}(x)}= \epsilon_{n,m},$$
and by (\ref{estul}) we have that $h_{\omega_n}$ is bounded above and below. Then, we conclude that  $h_{\omega_k}$ converges.

Denote
$$h_{\omega}=\lim_k h_{\omega_k}.$$
It follows from Eq. (\ref{estul}) that \begin{equation}\label{lowerbounds}    \frac{1}{K} \leq |h_{\omega}(x)|\leq K\end{equation}       for every $x \in O$ and infinite word $\omega$.

For each $\omega$ the function $h_{\omega}$ is complex analytic. It is the extension of a strictly positive real function defined on $I$.

\end{proof}

\begin{cor} \label{anaex} For each $\omega \in \Sigma$ the function $\log h_\omega(\cdot)\colon I \rightarrow \mathbb{R}$ has a complex analytic extension to $O$.
\end{cor}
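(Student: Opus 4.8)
The plan is to derive this immediately from Theorem \ref{hbeta} together with the standing hypothesis that $O$ is simply connected. By Theorem \ref{hbeta}, for each fixed $\omega \in \Sigma$ the function $h_\omega$ is complex analytic on $O$ and satisfies $|h_\omega(z)| \geq 1/K > 0$ for every $z \in O$; in particular $h_\omega$ has no zeros in $O$. A nonvanishing holomorphic function on a simply connected domain admits a holomorphic logarithm, so, fixing a base point $x_0 \in I \subset O$, I would set
$$\log h_\omega(z) := \ln h_\omega(x_0) + \int_{x_0}^{z} \frac{h_\omega'(\zeta)}{h_\omega(\zeta)}\, d\zeta ,$$
the integral being path-independent by simple connectivity of $O$. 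This is holomorphic on $O$. Since $h_\omega$ is real and strictly positive on $I$ (last line of the proof of Theorem \ref{hbeta}), the restriction of this function to $I$ is real-valued and coincides with $x \mapsto \ln h_\omega(x)$; hence it is the desired complex analytic extension of $\log h_\omega|_I$.

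Alternatively, and closer to the computations already made, one can exhibit the extension as an explicit series. Iterating the recursion (\ref{rec2}) gives, on $I$,
$$\log h_{\omega_k}(x) = \log h_{\omega_1}(x) + \sum_{j=1}^{k-1} \log\!\Big( g_{i_{j+1}}\!\circ \psi_{\omega_{j+1}}(x)\, \tfrac{\tilde\mu(I_{\omega_j})}{\alpha\, \tilde\mu(I_{\omega_{j+1}})}\Big),$$
where the $j$-th summand is precisely $\log(1+\tilde\delta_{j,x})$. By hypothesis $\log g$ has a complex analytic extension, so $\log h_{\omega_1}$ is analytic on $O$ and so is each summand; these analytic extensions agree on $I$ with the principal branch of $\log(1+\tilde\delta_{j,x})$ once $|\tilde\delta_{j,x}|<1$, hence on all of $O$ by the identity theorem. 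The bound $|\tilde\delta_{j,x}| \leq C\lambda^{j}$ then yields $|\log(1+\tilde\delta_{j,x})| \leq 2C\lambda^{j}$ on $O$ for $j$ large, so the tail of the series converges uniformly on $\overline{O}$ to an analytic function; letting $k \to \infty$ and using $h_{\omega_k} \to h_\omega$ from Theorem \ref{hbeta} identifies the limit with $\log h_\omega$, now defined and analytic on $O$.

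There is essentially no obstacle here: the only point needing care is that the logarithm be single-valued, with no monodromy around zeros of $h_\omega$ — which is guaranteed because $O$ is simply connected and $h_\omega$ never vanishes (equivalently, in the series approach, the uniform decay $|\tilde\delta_{j,x}| \leq C\lambda^j$ keeps each factor inside a disk of radius $<1$ about $1$, where the principal branch of $\log$ is unambiguous). All remaining verifications are routine.
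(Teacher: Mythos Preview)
Your first argument is correct and is exactly the paper's proof: $O$ is simply connected, $h_\omega$ is analytic on $O$, and $h_\omega$ is nonvanishing there by the bound $1/K \le |h_\omega|$ from Theorem~\ref{hbeta}, so a holomorphic logarithm exists (the paper cites \cite{Con}, Cor.~6.17, for this). You simply spell out the integral construction of the branch, which the paper omits.

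Your alternative series argument is not in the paper but is also correct and has the minor advantage of being constructive: it identifies $\log h_\omega$ as a uniformly convergent series of analytic functions built from $\log g_i\circ\psi_i$, without any appeal to simple connectivity beyond what is needed to define those summands in the first place.
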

\begin{proof} Since $O$ is a simply connected open set,  the funtions  $h_\omega$ are complex analytic, and $h_\omega(x) \neq 0$ for every $x \in O$, the result follows from the property of the normal families in Complex Analysis (see \cite{Con} Cor. 6.17).
\end{proof}
We use the notation  $h_\omega(x) = h(\omega,x),$ $h_{\omega_k} (x) = h(\omega_k,x),$  for $x\in [0,1]$ and $\omega \in \{1,2,..,d\}^\mathbb{N}$, according to convenience.

For every $\tilde{\mu}$-integrable  function $z\colon I \rightarrow \mathbb{R}$ we can define the signed measure $z\tilde{\mu}$ as
$$(z\tilde{\mu})(A)=\int_A z(x) \tilde{\mu}(x)$$
for every Borel set $A \subset I$.

\begin{thm}\label{formula} Let $$z\colon I \rightarrow \mathbb{R}$$ be a positive H\"older-continuous function. Then, the sequence
$$\rho_z(x) :=  \lim_k \sum_{|\gamma|=k} h_{\gamma}(x) \,   [\,(z\,\tilde{\mu}\,)\,(I_{\gamma})]= \lim_k \sum_{|\gamma|=k} h_{\gamma}(x) \,   \int_{I_{\gamma}} z\,d\, \tilde{\mu},$$
converges for each $x \in O$. This convergence is uniform on compact subsets of $O$. Indeed
$$\rho_z(x)= v(x)\int z  \ d\tilde{\mu},$$
where $v$ is the complex analytic extension of the function $v$ defined in $(\ref{v})$.
 Furthermore, there exists a probability  $\mu$ over the Borel sigma algebra in the space of infinite words such that
 \begin{equation}\label{comconv1}
 v(x)=\rho_v(x) = \int h_{\omega}(x)\ d\mu(\omega).\end{equation}
\end{thm}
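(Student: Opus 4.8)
The plan is to prove the three assertions in turn, each time extracting analyticity/convergence from Theorem \ref{hbeta} and the bounded-distortion estimates already established. First I would fix a positive Hölder function $z$ and show pointwise convergence of the partial sums $S_k(x):=\sum_{|\gamma|=k} h_\gamma(x)\,(z\tilde\mu)(I_\gamma)$ on $O$. Using the recursion (\ref{rec2}) and the bound $|\tilde\delta_{k,x}|\le C\lambda^k$, one writes $h_{\gamma}(x) = h_{\gamma_{k-1}}(x)\,(1+\tilde\delta_{k-1,x})$, so that comparing $S_{k}$ with $S_{k-1}$ one peels off a factor $(1+\tilde\delta)$ and uses $\sum_{|\gamma|=k,\ \gamma_{k-1}=\gamma'} (z\tilde\mu)(I_\gamma) = (z\tilde\mu)(I_{\gamma'})$ to telescope. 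The error between $S_k$ and $S_{k-1}$ is then $O(\lambda^{k-1})$ uniformly in $x\in O$ (using $\frac1K\le|h_{\gamma}|\le K$ and $\|z\|_\infty$), which gives a Cauchy sequence in $\mathcal B(O)$; by Lemma \ref{subseql} the limit $\rho_z$ is complex analytic on $O$ and the convergence is uniform on compacts.

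Next I would identify the limit. On the real interval $I$, $\sum_{|\gamma|=k} h_\gamma(x)(z\tilde\mu)(I_\gamma)$ is, up to the normalization built into $h_\gamma=\tilde h_\gamma/(\alpha^{|\gamma|}\tilde\mu(I_\gamma))$, exactly $\alpha^{-k}(P_{\log g}^k(z))(x)$ after regrouping $\int_{I_\gamma}z\,d\tilde\mu \approx z(\psi_\gamma(x))\tilde\mu(I_\gamma)$ with a bounded-distortion error (again $O(\lambda^k)$); more carefully one writes $(P_{\log g}^k z)(x)=\sum_{|\gamma|=k}\tilde h_\gamma(x) z(\psi_\gamma(x))$ and compares term by term with $\alpha^k\sum_\gamma h_\gamma(x)\int_{I_\gamma}z\,d\tilde\mu$. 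The spectral properties (\ref{v}) of $P_{\log g}$ — a simple leading eigenvalue $\alpha$ with eigenfunction $v$ and eigenmeasure $\tilde\mu$, hence $\alpha^{-k}P_{\log g}^k z \to v\,\tilde\mu(z)$ uniformly on $I$ — then give $\rho_z = v\cdot\int z\,d\tilde\mu$ on $I$, and by analytic continuation (both sides lie in $\mathcal B(O)$ and agree on $I$) on all of $O$. This simultaneously shows that the $v$ appearing in (\ref{v}) extends complex-analytically to $O$, namely as $\rho_v/\tilde\mu(v)=\rho_v$.

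Finally I would produce the probability $\mu$ on $\Sigma$ and formula (\ref{comconv1}). The natural candidate is the shift-invariant Gibbs/equilibrium measure for $\log g$ on $\Sigma$ whose cylinder measures are $\mu([\gamma])=(v\tilde\mu)(I_\gamma)/(v\tilde\mu)(I)=\tilde\mu(v)^{-1}\int_{I_\gamma}v\,d\tilde\mu$; more invariantly, $\mu$ is the pullback under the coding map $\omega\mapsto \bigcap_k I_{\omega_k}$ of the $f$-invariant measure $v\tilde\mu$. Taking $z=v$ in the already-proved convergence, the partial sums are $\sum_{|\gamma|=k} h_\gamma(x)\,(v\tilde\mu)(I_\gamma) = \sum_{|\gamma|=k} h_\gamma(x)\,\mu(C_\gamma) = \int h_{\omega_k}(x)\,d\mu(\omega)$, since $h_{\omega_k}$ is constant on each cylinder $C_\gamma$. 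By Theorem \ref{hbeta}, $h_{\omega_k}(x)\to h_\omega(x)$ with the uniform bound $\frac1K\le|h_{\omega_k}(x)|\le K$, so dominated convergence lets me pass to the limit under the integral: $\rho_v(x)=\int h_\omega(x)\,d\mu(\omega)$, and the left side equals $v(x)\,\tilde\mu(v)=v(x)$. The main obstacle I anticipate is not any single estimate but keeping the bounded-distortion bookkeeping clean — making precise the comparison between $\int_{I_\gamma}z\,d\tilde\mu$ and $z(\psi_\gamma(x))\tilde\mu(I_\gamma)$, and between the normalized sums and $\alpha^{-k}P_{\log g}^k z$, uniformly for $x$ in a complex neighbourhood $O$ rather than just on $I$ — together with citing the correct uniform-on-$O$ spectral convergence for the complexified operator (Lemma \ref{subseql} applied to $\alpha^{-k}P_{\log g}^k$).
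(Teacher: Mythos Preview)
Your identification of the limit on $I$ via the Ruelle operator and your construction of $\mu$ via the cylinder masses $(v\tilde\mu)(I_\gamma)$ match the paper's proof essentially verbatim, including the dominated-convergence passage to the integral formula (\ref{comconv1}).

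The gap is in your first step, the claimed direct Cauchy estimate $|S_k-S_{k-1}|=O(\lambda^{k-1})$ on $O$. The telescoping you invoke needs
\[
\sum_{\substack{|\gamma|=k\\ \gamma_{k-1}=\gamma'}} (z\tilde\mu)(I_\gamma)\;=\;(z\tilde\mu)(I_{\gamma'}),
\]
but with the paper's convention $\psi_\gamma=\psi_{i_k}\circ\cdots\circ\psi_{i_1}$ the intervals $\{I_{(\gamma',i_k)}:i_k=1,\dots,d\}$ are the images $\psi_{i_k}(I_{\gamma'})$; they form a partition of $f^{-1}(I_{\gamma'})$, not of $I_{\gamma'}$. (Concretely, for $f(x)=2x\bmod 1$ one has $I_{(0,0)}\cup I_{(0,1)}=[0,\tfrac14]\cup[\tfrac12,\tfrac34]\neq I_{(0)}$.) Hence the displayed identity holds only when $z\tilde\mu$ is $f$-invariant, i.e.\ when $z$ is a scalar multiple of $v$; for general H\"older $z$ your peeling argument does not close, and indeed the true rate at which $S_k-S_{k-1}\to0$ is governed by the spectral gap of $P_{\log g}$, not by $\lambda$. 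Note also that the recursion (\ref{rec2}) adds a symbol at the \emph{end} of $\gamma$, while the partition of $I_{\gamma'}$ into level-$k$ cylinders is by varying the \emph{first} symbol, so there is no compatible regrouping that rescues the telescoping.

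The paper sidesteps this entirely: it never proves $S_k$ Cauchy on $O$. Instead it (i) shows the sequence is \emph{uniformly bounded} on $O$ via $|h_\gamma|\le K$ and $\sum_{|\gamma|=k}\tilde\mu(I_\gamma)=1$, (ii) proves pointwise convergence on the real interval $I$ by your Ruelle-operator comparison, and then (iii) invokes Lemma~\ref{subseql}: every subsequence of the bounded family has a further subsequence converging uniformly on compacts of $O$, and since any two subsequential limits are analytic and agree on $I$, they agree on $O$. This normal-families/identity-theorem manoeuvre replaces your Cauchy estimate and is exactly what you would need to supply.
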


\begin{proof} Define  $\rho(k)\colon O \rightarrow \mathbb{C}$ as $$\rho(k)(x):= \,\sum_{|\gamma|=k} h_{\gamma}(x) \,\,  \int_{I_{\gamma}} z\,d\, \tilde{\mu}.$$ Firstly  we  will prove  that
\begin{equation}\label{comconv}  \rho(k)(x)\rightarrow_k v(x)\int z  \ d\tilde{\mu},\end{equation}  for each   $x \in I$. Indeed for $x\in I$

$$\sum_{|\gamma|=k} h_{\gamma}(x) \, \int_{I_{\gamma}} z\,d\, \tilde{\mu} = \sum_{|\gamma|=k} h_{\gamma}(x) \  z(\psi_{\gamma}(x))(1+\epsilon_{x,\gamma}) \tilde{\mu}(I_{\gamma})$$
$$=\sum_{|\gamma|=k} h_{\gamma}(x) \  z(\psi_{\gamma}(x)) \tilde{\mu}(I_{\gamma}) + \tilde{\epsilon}_{x,k}$$
$$= \alpha^{-k} \sum_{|\gamma|=k} \tilde{h}_{\gamma}(x)\  z(\psi_{\gamma}(x)) +   \tilde{\epsilon}_{x,k}$$
$$= \alpha^{-k} (P_{\log g}^kz)(x) + \tilde{\epsilon}_{x,k} .$$
Here,
$$|\epsilon_{x,\gamma}|, |\tilde{\epsilon}_{x,k}| \leq C\eta^k,$$
for some $\eta < 1$.

It is a well know fact  that
$$\lim_k  \alpha^{-k} (P_{\log g}^kz)(x)= v(x)  \int z \ d\tilde{\mu}.$$
So
$$\lim_k \rho(k)(x)= v(x)  \int z \ d\tilde{\mu}.$$
for $x \in I$.

Next we claim that  $\rho(k)$ converges uniformly on compact subsets  of $O$ to a complex analytic function $\rho_z$. Note  that  by Eq. (\ref{estul}) we have
$$|\sum_{|\gamma|=k} h_{\gamma}(x) \,   \int_{I_{\gamma}} z\,d\, \tilde{\mu}|
\leq  K\, \sup_{x \in I} |z(x)| \sum_{|\gamma|=k}  \tilde{\mu}(I_{\gamma})  \leq  K\, \sup_{x \in I} |z(x)|,$$
for every $x \in O$,
so in particular the complex analytic functions
$\rho(k)$
are uniformly bounded in $O$. By Lemma \ref{subseql} every subsequence of $\rho(k)$ has a subsequence  that converges uniformly on compact subsets of $O$  to a complex analytic function defined in $O$, so to prove the claim it is enough to show that every subsequence of $\rho(k)$ that converges uniformly on compact subsets of $O$ converges to  the very same complex analytic function. Indeed we already proved that such limit functions must coincide with $$v(x)  \int z \ d\tilde{\mu}$$ on $I$. Since the limit functions are complex analytic, if they coincide on $I$ they must coincide everywhere in $O$. This finishes the proof of the claim.  In particular taking $z(x)=1$ everywhere, this proves that $v\colon I \rightarrow \mathbb{R}$ has a complex analytic extension $v\colon O\rightarrow \mathbb{C}$. Consequently for every function $z$
$$\rho_z(x)= v(x)  \int z \ d\tilde{\mu},$$
once  we already know that these functions coincide on $I$. For any given $z$ we have that  $\rho_z(x)= v(x)\int z  \ d\tilde{\mu}$ is an eigenfunction of the Ruelle operator. So we got a spectral projection in the space of eigenfunctions.

Now  we will prove the second statement. Consider the unique probability $\mu$ defined on the space of infinite words such that on the cylinders $C_{\gamma}$, $|\gamma|< \infty$, it   satisfies
$$\mu(C_{\gamma})=(v\tilde{\mu})(I_{\gamma})=  \int_{I_{\gamma}} \,v\,d\, \tilde{\mu}.$$
Note that $\mu$ extends to a measure  on the space of infinite words  because $v\tilde{\mu}$ is $f$-invariant and it has no atoms. For  each fixed $x \in O$, the functions $\omega\rightarrow h_{\omega_k}(x)$ are constant on each cylinder $C_\gamma$, $|\gamma|=k$. So
$$\int h_{\omega_k}(x) \ d\mu(\omega)= \sum_{|\gamma|=k} h_{\gamma}(x)\mu(C_\gamma).$$
By the Dominated Convergence Theorem
$$ \int h_{\omega}(x) \ d\mu(\omega)= \lim_k \int h_{\omega_k}(x) \ d\mu(\omega) = \lim_k \sum_{|\gamma|=k} h_{\gamma}(x) \  \mu(C_\gamma)=$$
$$\lim_k \sum_{|\gamma|=k} h_{\gamma}(x) \  (v\tilde{\mu})(I_{\gamma}).$$
\end{proof}

\begin{cor} \label{cor} The function $\rho_z= v(x)\int z  \ d\tilde{\mu}$ is a $\alpha$-eigenfunction of $P_{\log g}$
$$P_{\log g}( \rho_z )= \alpha \cdot \rho_z.$$
\end{cor}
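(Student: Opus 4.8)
The plan is to observe that $\rho_z$ is, by the conclusion of Theorem~\ref{formula}, nothing but the fixed eigenfunction $v$ multiplied by the scalar $\int z\, d\tilde\mu$, so the statement reduces to the already-known fact that $v$ is an $\alpha$-eigenfunction of $P_{\log g}$, together with the linearity of the operator.

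More precisely, I would proceed as follows. First, recall from Theorem~\ref{formula} that $\rho_z(x) = \big(\int z\, d\tilde\mu\big)\, v(x)$ for $x\in O$, where $v\colon O\to\mathbb{C}$ is the complex analytic extension of the function $v$ of $(\ref{v})$, and the factor $c:=\int z\, d\tilde\mu$ is a finite real constant independent of $x$. Second, $(\ref{v})$ with $n=1$ gives $P_{\log g} v = \alpha v$ on $I$; since $\psi_i(\overline O)\subset O$ and each $\tilde h_i$ is complex analytic on $O$, the operator $P_{\log g}$ maps $\mathcal{B}(O)$ into itself, so both $P_{\log g} v$ and $\alpha v$ are complex analytic on the connected set $O$ and agree on $I$, which has accumulation points in $O$; hence $P_{\log g} v = \alpha v$ on all of $O$. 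Third, by linearity of $P_{\log g}$,
$$P_{\log g}(\rho_z) = P_{\log g}(c\, v) = c\, P_{\log g}(v) = c\, \alpha\, v = \alpha\, \rho_z,$$
which is the claim.

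Alternatively, staying closer to the limit formula, one can avoid invoking the eigenfunction equation for $v$ directly: write $\rho_z = \lim_k \rho(k)$ with uniform convergence on compact subsets of $O$ (Theorem~\ref{formula}), apply $P_{\log g}$, and use the recursion $(\ref{rec2})$ together with the $\tilde\mu$-conformality $(\ref{relacao0})$ to reindex the terms $\sum_i \tilde h_i(x)\, h_\gamma(\psi_i(x))\,\tilde\mu(I_\gamma)$ as a sum over words of length $k+1$, so that $P_{\log g}\rho(k) \to \alpha\, \rho_z$; one then passes $P_{\log g}$ through the limit, which is legitimate because $P_{\log g}$ is a finite sum of bounded composition–multiplication operators and hence continuous for uniform convergence on compact subsets of $O$ (using $\psi_i(\overline O)\subset O$). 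In either route there is no real obstacle; the only point requiring a line of care is the passage of the identity $P_{\log g} v=\alpha v$ (respectively, of the operator itself) from the real interval $I$ to the complex neighborhood $O$, handled by analytic continuation exactly as in the proof of Theorem~\ref{formula}.
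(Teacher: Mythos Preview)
Your proof is correct and is precisely the reasoning the paper intends: the corollary is stated without proof, as it follows immediately from Theorem~\ref{formula} (which gives $\rho_z = c\,v$ with $c=\int z\,d\tilde\mu$) together with the eigenequation $(\ref{v})$ for $v$ and the linearity of $P_{\log g}$. Your care in extending $P_{\log g}v=\alpha v$ from $I$ to $O$ by analytic continuation is appropriate, though the paper does not spell this out either.
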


Therefore, any $\rho_z$ is an eigenfunction for the Ruelle operator for $A=\log g$. Later we will consider a real parameter $\beta$ and we will denote by $\phi_\beta(x)$ a specific normalized eigenfunction of the Ruelle operator
for $\beta \log g$.

The two results described above are in some sense similar to the ones in \cite{PU} section 9, \cite{MM}, \cite{BLT}.  We explain this claim in a more precise way in  the next section.

The results described in this section correspond in \cite{MM} to the potential $\log g = A = -\log f'$.

\section{Maximizing probabilities, the dual potential and Scaling functions }\label{dualpot}

\bigskip

From Corollary \ref{cor}, given $A=\beta\,\log g$, there exists  $\alpha_\beta$ and $  \rho_\beta  $, such that, $P_{\beta\,\log g} (\rho_\beta ) = \alpha_\beta\, \rho_\beta,$  where $\rho_\beta$ has a complex analytic extension to a neighborhood $O_\beta$.
The $\phi_{\beta A} $ is colinear with $\rho_\beta$ and satisfies the normalization described above. Therefore, we get from Corollary \ref{cor} the expression
$$\rho_\beta(x) = \int h_{\omega}(x)\ d\mu(\omega).$$

Our main purpose in this section is to get the following:

\begin{pro} \label{main1} For any $\beta$ we have that
$\log h_\omega (x) =\log  h_\beta(\omega,x)$ is well defined and is an involution kernel for $\beta \log g$. For $\omega$ and $\beta$ fixed, the function $\log h_\beta(\omega,.)$ has a  complex analytic extension to  a complex  neighborhood $O$ of $[0,1]$.
\end{pro}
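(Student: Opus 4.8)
The plan is to establish the two assertions separately, but both rest on the formula $h_{\omega}=\lim_k h_{\omega_k}$ produced by Theorem~\ref{hbeta}, together with the recursion~(\ref{rec2}) unwound to the limit. First I would verify the analyticity claim. By Theorem~\ref{hbeta} each $h_{\omega_k}$ lies in $\mathcal B(O)$, the sequence converges in $\mathcal B(O)$, and by~(\ref{lowerbounds}) the limit $h_\omega$ is bounded away from $0$ on $O$. Since $O$ is simply connected and $h_\omega$ is non-vanishing and holomorphic on $O$, it admits a holomorphic logarithm; this is exactly Corollary~\ref{anaex}, so $\log h_\omega(\cdot)=\log h(\omega,\cdot)$ has a complex analytic extension to $O$, which gives the second sentence of the Proposition (with $O$ playing the role of the neighborhood). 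The same argument applies verbatim with $g$ replaced by $g^\beta$ (equivalently $\log g$ by $\beta\log g$): the hypotheses on the potential in Section~\ref{anapot} are stable under taking real powers on a complex neighborhood since $\log g$ is assumed well defined there, so the construction of $\tilde h^{(\beta)}_\gamma$, $h^{(\beta)}_\gamma$, the eigendata $(\alpha_\beta,\tilde\mu_\beta,v_\beta)$, and finally $h_\beta(\omega,\cdot)$ goes through unchanged, yielding the analytic extension for each $\beta$.

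Next I would prove that $W(\omega,x):=\log h_\beta(\omega,x)$ is an involution kernel for $\beta\log g$. By Remark~\ref{re} it suffices to show that
$$
\beta\log g\bigl(\psi_{\omega_0}(x)\bigr)\;+\;\log h_\beta\bigl(\T^{-1}(\omega,x)\bigr)\;-\;\log h_\beta(\omega,x)
$$
is continuous and depends only on $\omega$ (not on $x$), where $\T^{-1}(\omega,x)=(\sigma\omega,\psi_{\omega_0}(x))$. The strategy is to pass the recursion~(\ref{rec2}) to the limit $k\to\infty$. Fix $\omega=(i_1,i_2,\dots)$ and set $\omega'=\sigma\omega=(i_2,i_3,\dots)$; note that $\psi_{\omega'_k}\circ\psi_{i_1}=\psi_{\omega_{k+1}}$, so applying $\psi_{i_1}$ inside $h_{\omega'_k}$ produces the ``shifted'' truncation. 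Writing out~(\ref{rec2}) relating $h_{\omega_{k+1}}(x)$ to $h_{\omega_k}(x)$, one sees after re-indexing that $h_\beta(\sigma\omega,\psi_{i_1}(x))$ and $h_\beta(\omega,x)$ differ, in the limit, by a factor which is (a constant depending on $\omega$, built from the ratios $\tilde\mu_\beta(I_{\omega_k})/(\alpha_\beta\,\tilde\mu_\beta(I_{\omega_{k+1}}))$ in the limit) times a power of $g_{i_1}\circ\psi_{i_1}(x)=\tilde h^{(\beta)}_{i_1}(x)$. Taking logarithms, the $x$-dependence on the right is exactly $-\beta\log g(\psi_{i_1}(x))$, so the displayed combination reduces to a quantity $A^*(\omega)$ depending only on $\omega$; its continuity in $\omega$ follows from the uniform (in $x$, in $\omega$) estimates~(\ref{estmn})--(\ref{estul}) which make $\omega\mapsto h_\beta(\omega,x)$ continuous. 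This identifies $A^*=(\beta\log g)^*$ and completes the verification.

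The main obstacle I anticipate is bookkeeping in the limit of~(\ref{rec2}): the products $\prod_k \tilde\mu_\beta(I_{\omega_k})/(\alpha_\beta\,\tilde\mu_\beta(I_{\omega_{k+1}}))$ do not converge termwise in an obvious telescoping way once mixed with the $g$-factors, and one must use the bounded-distortion/conformality estimates (the $\delta_{k,x,y}$, $\tilde\delta_{k,x}$, $\epsilon_{n,m}$ of Theorem~\ref{hbeta}) to see that the ``cocycle defect'' $\beta\log g(\psi_{\omega_0}(x))+\log h_\beta(\T^{-1}(\omega,x))-\log h_\beta(\omega,x)$ has a genuine limit which is $x$-independent, rather than merely a bounded oscillation. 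Concretely, the cleanest route is: prove the identity at the level of finite truncations with a controlled error term that is $O(\lambda^{k})$ uniformly in $x$, then let $k\to\infty$; the uniformity is precisely what~(\ref{estmn}) and~(\ref{estul}) deliver. Once that error control is in hand, continuity in $\omega$ and independence of $x$ are immediate, and Remark~\ref{re} closes the argument.
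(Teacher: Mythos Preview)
Your proposal is correct and follows essentially the same route as the paper: show analyticity via Corollary~\ref{anaex}, then verify the cocycle identity by computing the ratio $h(\sigma\omega,\psi_{i_1}(x))/h(\omega,x)$ and showing its $x$-dependence is exactly $g(\psi_{i_1}(x))^{-1}$. Two small remarks on execution. First, the measure ratio that enters is not $\tilde\mu(I_{\omega_k})/(\alpha\,\tilde\mu(I_{\omega_{k+1}}))$ from~(\ref{rec2}) (which adds a symbol at the \emph{end} of the truncation) but rather $\tilde\mu(I_{\omega_k})/\tilde\mu(I_{\sigma^\star(\omega_k)})$ (removing a symbol at the \emph{beginning}); the paper names this limit the \emph{scaling function} $s(\omega)$ and proves it exists directly, which cleans up the bookkeeping. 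Second, the obstacle you anticipate does not arise: at each finite level the $\tilde h$-ratio telescopes \emph{exactly} to $g(\psi_{i_1}(x))$ with no error term, because $\tilde h_{\omega_k}(x)=g_{i_1}(\psi_{i_1}(x))\cdot \tilde h_{\sigma^\star(\omega_k)}(\psi_{i_1}(x))$; the only limit to control is the measure ratio, and that is handled once and for all by the existence of $s(\omega)$. With these adjustments your argument coincides with the paper's Lemma, yielding
\[
\frac{h(\sigma\omega,\psi_{i_1}(x))}{h(\omega,x)}=\frac{\alpha\, s(\omega)}{g(\psi_{i_1}(x))},
\]
so that $A^*(\omega)=\log(\alpha\, s(\omega))$ and Remark~\ref{re} applies.
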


Given a finite word $\gamma=(i_1,i_2,\dots,i_k)$, $k> 1$, define $\sigma^\star(\gamma)=(i_2,\dots,i_k)$. For infinite words we define $\sigma^\star$ as the usual shift function.   The {\em scaling function } $s\colon \Sigma \rightarrow \mathbb{R}$  of the potential $g$ is defined as
$$s(\omega)= \lim_{k\rightarrow \infty} \frac{\tilde{\mu}(I_{\omega_k})}{\tilde{\mu}(I_{\sigma^\star(\omega_k)})}.$$

This definition is the natural generalization of the scaling function  in \cite{SS} and \cite{GJQ}. If we take  $\log g=-\log f'$ then we get their result. It will follow from our results  the existence of an involution kernel which provides a co-homology between the scaling function $[\,\log  (\alpha \, s)\,](w)$ and $\log g(x)=-\log f'(x)$.  The constant $\alpha$ is  the eigenvalue defined before in section 1.

To verify that the above limit indeed exists, note that by Eq. (\ref{relacao})  and since $g$
is a H\"older-continuous function we have that

$$\frac{\tilde{\mu}(I_{\omega_{k+1}})}{ \tilde{\mu}(I_{\sigma^\star(\omega_{k+1})})}= \frac{\int_{I_{\omega_{k}}}
g\circ \psi_{i_{k+1}}(y)   \ d\tilde{\mu}(y)}{ \int_{I_{\sigma^\star(\omega_{k})}}     g\circ \psi_{i_{k+1}}(y)   \ d\tilde{\mu}(y)}=(1+\epsilon_k) \frac{\tilde{\mu}(I_{\omega_{k})}}{ \tilde{\mu}(I_{\sigma^\star(\omega_{k})})},$$
where $|\epsilon_k|\leq C\lambda^k$. So $s(\omega)$ is well defined.

Note that, since $v >0$ is a H\"older function and  $I_{\omega_k} \subset I_{\sigma(\omega_k)}$,
$$s(\omega)=  \lim_{k\rightarrow \infty} \frac{(v\tilde{\mu})(I_{\omega_k})}{(v\tilde{\mu})(I_{\sigma^\star(\omega_k)})} = \lim_{k\rightarrow \infty} \frac{\mu(C_{\omega_k})}{\mu(C_{\sigma^\star(\omega_k)})},$$
so the the scaling function $s$ is the Jacobian of the measure $\mu$.

The {\em dual potential} $g^\star$ is defined as
$$g^{\star}(\omega):=\alpha s(\omega).$$

\begin{lem} We have that
$$\frac{g^{\star}(\omega)}{g(\psi_{i_0}(x))}= \frac{h(\sigma(\omega), \psi_{i_0}(x))}{h(\omega,x)}.$$
\end{lem}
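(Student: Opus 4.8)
The plan is to verify the claimed identity directly from the recursive formula for $h_{\gamma}$, namely Eq. (\ref{rec2}), together with the definition of the scaling function $s$ and the dual potential $g^\star = \alpha s$. Write $\omega = (i_0, i_1, i_2, \dots)$, so that $\sigma(\omega) = (i_1, i_2, \dots)$ and $\omega_{k} = (i_0, i_1, \dots, i_{k-1})$, while $\sigma(\omega)_{k} = (i_1, \dots, i_{k})$. The key observation is that the word $\omega_{k+1}$ is obtained from $\sigma(\omega)_{k}$ by prepending the symbol $i_0$; equivalently, $\psi_{\omega_{k+1}} = \psi_{\sigma(\omega)_k} \circ \psi_{i_0}$, so that $I_{\omega_{k+1}} = \psi_{\sigma(\omega)_k}(I_{i_0}) \subset \psi_{\sigma(\omega)_k}(I) = I_{\sigma(\omega)_k}$, and more importantly $\psi_{i_0}(I_{\sigma(\omega)_k}) \supset I_{\omega_{k+1}}$... — actually the cleanest route is to relate $h_{\omega_{k+1}}(x)$ to $h_{\sigma(\omega)_k}(\psi_{i_0}(x))$.

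First I would establish a finite-level identity: from the multiplicative structure of $\tilde h_\gamma$ (the inductive definition in Section \ref{anapot}) one checks that $\tilde h_{\omega_{k+1}}(x) = \tilde h_{i_0}(x)\cdot \tilde h_{\sigma(\omega)_k}(\psi_{i_0}(x)) = g_{i_0}(\psi_{i_0}(x))\cdot \tilde h_{\sigma(\omega)_k}(\psi_{i_0}(x))$, since applying the first inverse branch $\psi_{i_0}$ and then the branches of $\sigma(\omega)_k$ realizes $\psi_{\omega_{k+1}}$. Dividing by the normalizing factors $\alpha^{k+1}\tilde\mu(I_{\omega_{k+1}})$ and $\alpha^{k}\tilde\mu(I_{\sigma(\omega)_k})$ respectively, and recalling $h_\gamma = \tilde h_\gamma/(\alpha^{|\gamma|}\tilde\mu(I_\gamma))$, I get
$$ h_{\omega_{k+1}}(x) = h_{\sigma(\omega)_k}(\psi_{i_0}(x))\cdot g(\psi_{i_0}(x))\cdot \frac{\tilde\mu(I_{\sigma(\omega)_k})}{\alpha\,\tilde\mu(I_{\omega_{k+1}})}. $$
Now let $k\to\infty$. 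By Theorem \ref{hbeta}, $h_{\omega_{k+1}}(x)\to h(\omega,x)$ and $h_{\sigma(\omega)_k}(\psi_{i_0}(x))\to h(\sigma(\omega),\psi_{i_0}(x))$ uniformly on $O$ (note $\psi_{i_0}(x)\in O$ whenever $x\in O$). The remaining factor is $g(\psi_{i_0}(x))\cdot \tilde\mu(I_{\sigma(\omega)_k})/(\alpha\,\tilde\mu(I_{\omega_{k+1}}))$; since $\omega_{k+1}$ has $\sigma^\star(\omega_{k+1}) = \sigma(\omega)_k$, the ratio $\tilde\mu(I_{\sigma(\omega)_k})/\tilde\mu(I_{\omega_{k+1}})$ is exactly the reciprocal of the quantity whose limit defines $s(\omega)$, so it converges to $1/s(\omega)$. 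Hence in the limit
$$ h(\omega,x) = h(\sigma(\omega),\psi_{i_0}(x))\cdot \frac{g(\psi_{i_0}(x))}{\alpha\, s(\omega)} = h(\sigma(\omega),\psi_{i_0}(x))\cdot \frac{g(\psi_{i_0}(x))}{g^\star(\omega)}, $$
which upon rearranging is precisely the asserted formula $g^\star(\omega)/g(\psi_{i_0}(x)) = h(\sigma(\omega),\psi_{i_0}(x))/h(\omega,x)$.

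The main obstacle, and the only genuinely delicate point, is the passage to the limit in the product: one must confirm that the three factors converge separately (which they do, by Theorem \ref{hbeta} for the two $h$-factors and by the well-definedness of $s(\omega)$ established just before the lemma for the $\tilde\mu$-ratio) and that no factor degenerates — here the two-sided bound $1/K \le |h_\omega(x)| \le K$ from Eq. (\ref{lowerbounds}) and the positivity $s(\omega)>0$, $g>0$ guarantee the limit is a legitimate identity among finite nonzero quantities rather than an indeterminate form. A secondary bookkeeping point is to get the indexing of truncations right ($\sigma^\star(\omega_{k+1})$ versus $\sigma(\omega)_k$, and the shift by one in the power of $\alpha$), but once the finite-level identity above is written down carefully this is automatic.
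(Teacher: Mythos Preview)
Your proof is correct and follows essentially the same approach as the paper's own argument: both compute the ratio by passing to the finite-level functions $h_{\gamma}$, exploit the multiplicative relation $\tilde h_{\omega_{k+1}}(x) = g(\psi_{i_0}(x))\cdot \tilde h_{\sigma(\omega)_k}(\psi_{i_0}(x))$, insert the normalizations $\alpha^{|\gamma|}\tilde\mu(I_\gamma)$, and then let $k\to\infty$ using Theorem~\ref{hbeta} and the definition of the scaling function $s$. The only difference is cosmetic indexing (you use $\omega_{k+1}$ versus $\sigma(\omega)_k$, the paper uses $\omega_k$ versus $\sigma^\star(\omega_k)$), and your write-up is in fact more careful about justifying the passage to the limit.
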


\begin{proof} Indeed
$$\frac{h(\sigma(\omega), \psi_{i_0}(x))}{h(\omega,x)}$$
$$= \lim_k \frac{h(\sigma(\omega_k), \psi_{i_0}(x))}{h(\omega_k,x)}.$$
$$\lim_k \frac{\tilde{h}(\sigma(\omega_k), \psi_{i_0}(x))}{\tilde{h}(\omega_k,x)}\frac{\alpha^{k}\tilde{\mu}(I_{\omega_k})}{\alpha^{k-1}\tilde{\mu}(I_{\sigma^*(\omega_k)})} $$
$$=\lim_k \frac{\alpha}{ g(\psi_{i_0}(x))}  \frac{\tilde{\mu}(I_{\omega_k})}{\tilde{\mu}(I_{\sigma(\omega_k)})}   $$
$$=\frac{\alpha}{ g(\psi_{i_0}(x))}  s(\omega) $$

\end{proof}

From the above we finally get Proposition \ref{main1}.

\section{Analyticity of the involution kernel}

From last section we  get that for each value $\beta\geq 0$
$$ \rho_{\beta A}(x) = \int e^{ W_\beta(w,x)} \,d
\nu_{\beta A^*}(w)= \int \, h_\beta (w,x)\,  \,d
\nu_{\beta A^*}(w),$$
is an eigenfunton for the Ruelle operator of the potential $\beta \log g$. The involution kernel  $W_\beta$ depends of the variable $\beta$.
\bigskip

{\bf Remark:}
There is a main difference from the reasoning of this section to the procedures in     \cite{BLT}. We will explain this.
Suppose $W_1$ is an involution kernel for $\log g$ (that is, $\beta=1$). Therefore, given a real value $\beta$ we have
$$\beta\, (\log g)^*(w)= \beta \,\log \, g \circ \T^{-1}(w,x)+ \beta \,W_1 \circ \T^{-1}(w,x) -
\beta \, W_1(w,x).$$

The involution kernel is not unique (see \cite{BLT}). We point out that $\log (h_\beta (w,x))$ is not necessarily equal to $\beta W_1$. This will require an extra work.
We will need to show the existence of a $H_\infty(w,x)$ (complex analytic on $x$), such that,
$h_\beta(w,x) \sim e^{ \beta \, H_\infty(w,x)}$ (in the sense that $\lim_{\beta \to \infty} \frac{1}{\beta} \log h_\beta(w,x)  = H_\infty(w,x)$).  In other words, we want to replace $W_\beta$ by a $\beta H_\infty$ (in the notation that will be followed later).
\bigskip

 We will show in Corollary \ref{ana}  that for each fixed $w$ the family $\frac{1}{\beta} \log h_\beta(w,x)$, $\beta>0$,
 is  normal.
\bigskip

Remember that for a given $w\in \Sigma$, we have
$h_{\omega}=\lim_k h_{\omega_k}.$

\begin{pro}\label{converge} Let $K \subset O$ be a compact.  There exists $C$  such that the following holds:
\begin{itemize}
\item[A.] For every $\beta \geq 1$ and $x\in K$, $\omega        \in \Sigma$, we have
\begin{equation}\label{est0}e^{-\beta C}\leq | h_\beta(\omega_1,x) |\leq e^{\beta C}\end{equation}
\item[B.] For every $\beta \geq 1$, $x \in K$, $\omega \in \Sigma$ and $k\geq 1$  we have
\begin{equation}\label{estabs} e^{-C\beta\lambda^{k}}\leq \Big|\frac{ h_\beta(\omega_{k+1},x)}{ h_\beta(\omega_{k},x)}\Big| \leq  e^{C\beta\lambda^{k}}.\end{equation}
\item[C.] For every finite word $\gamma$ there is a  function
$$q_{\gamma}\colon \mathbb{R}\times O\rightarrow \mathbb{C},$$
that is holomorphic on $x$, real valued for $x \in \mathbb{R}$ and  which does not depend on $K$, such that for every $x \in O$, $\beta \geq 1$, $\omega \in \Sigma$  we have
\begin{equation} \label{rec} h_\beta(\omega_{k+1},x)=e^{q_{\omega_{k+1}}(\beta,x)}.\end{equation}
Furthermore
\begin{equation}\label{estargini} |q_{\omega_{1}}(\beta,x)|\leq C\beta \end{equation}
and
\begin{equation}\label{estarg} | q_{\omega_{k+1}}(\beta,x)-  q_{\omega_{k}}(\beta,x) |\leq  C\beta\lambda^{k}\end{equation}
for every $\beta \geq 1$, $x \in K$, $\omega \in \Sigma$ and $k\geq 1$.
\end{itemize}
\end{pro}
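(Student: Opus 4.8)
The plan is to transfer the multiplicative distortion estimates of Theorem~\ref{hbeta} (which were proved for $\beta=1$) to the potential $\beta\log g$, keeping careful track of how the constants scale with $\beta$. The key observation is that replacing $g$ by $g^\beta$ replaces $\tilde h_i(x)=g_i(\psi_i(x))$ by $g_i(\psi_i(x))^\beta$, and replaces $\tilde h_\gamma$ by $\tilde h_\gamma^\beta$ as well; hence $h_\beta(\omega_{k},x)$ is obtained from the same recursion~(\ref{rec2}) but with the factor $g_{i_{k+1}}\circ\psi_{\omega_{k+1}}(x)$ replaced by its $\beta$-th power (and the normalizing constants $\alpha_\beta$, $\tilde\mu_\beta(I_{\omega_k})$ now refer to the $\beta$-conformal data). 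The crucial point is that the bounded-distortion ratios that controlled the $\beta=1$ case are raised to the power $\beta$: if $g_{i_{k+1}}\circ\psi_{i_{k+1}}(y)/g_{i_{k+1}}\circ\psi_{i_{k+1}}(x)=1+\delta_{k,x,y}$ with $|\delta_{k,x,y}|\le C\lambda^{k+1}$, then the corresponding ratio for $\beta\log g$ is $(1+\delta_{k,x,y})^\beta$, whose logarithm has modulus at most $2C\beta\lambda^{k+1}$ for $k$ large (using $|\log(1+z)|\le 2|z|$ for $|z|$ small). The analogous statement holds for the distortion term $\tilde\delta_{k,x}$ coming from conformality of $\tilde\mu_\beta$; here one needs that the bounded-distortion constant for $\tilde\mu_\beta$ can be taken uniform in $\beta\ge 1$, which follows because the distortion of $\tilde h_\gamma^\beta$ is governed by $\beta$ times the distortion of $\log\tilde h_\gamma$, and the latter is summable with a geometric bound independent of everything.

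First I would fix the compact $K\subset O$ and record, exactly as in Theorem~\ref{hbeta}, that $\mathrm{diam}\,\psi_{\omega_{k}}(O)\le C\lambda^{k}$ uniformly in $\omega$. Then for item~C, I define $q_{\omega_{k+1}}(\beta,x)$ directly as a sum of logarithms: starting from $q_{\omega_1}(\beta,x)=\log h_\beta(\omega_1,x)=\beta\log g_{i_1}(\psi_{i_1}(x))-\log\big(\alpha_\beta\,\tilde\mu_\beta(I_{\omega_1})\big)$ (where the branch of the logarithm is the real one on $I$ extended holomorphically to the simply connected $O$, using that $h_\beta(\omega_1,\cdot)$ is nonvanishing there by the analogue of Theorem~\ref{hbeta}), and then using the recursion to write
\[
q_{\omega_{k+1}}(\beta,x)-q_{\omega_k}(\beta,x)=\beta\,\log\big(g_{i_{k+1}}\circ\psi_{\omega_{k+1}}(x)\big)+\log\frac{\tilde\mu_\beta(I_{\omega_k})}{\alpha_\beta\,\tilde\mu_\beta(I_{\omega_{k+1}})}.
\]
By the bounded-distortion argument above this increment equals $\beta\cdot(\text{something of modulus }\le C\lambda^k)$ on $K$, giving~(\ref{estarg}); the estimate~(\ref{estargini}) is immediate from the explicit formula for $q_{\omega_1}$ together with the uniform-in-$\beta\ge1$ bounds $1/C\le\alpha_\beta^{1/\beta}\le C$ and $e^{-\beta C}\le\tilde\mu_\beta(I_{\omega_1})\le e^{-\beta C'}$... more precisely $|\log(\alpha_\beta\tilde\mu_\beta(I_{\omega_1}))|\le C\beta$, which is the content one needs. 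That $q_{\gamma}$ is holomorphic in $x$, real on $\mathbb R$, and independent of $K$ is clear since it is a finite sum of such functions and the formula never referred to $K$. Exponentiating gives~(\ref{rec}). Items A and B are then just the special cases: (\ref{est0}) is $e^{\pm}$ of $|q_{\omega_1}(\beta,x)|\le C\beta$, and (\ref{estabs}) follows by taking real parts in~(\ref{estarg}), since $\big|\,|h_\beta(\omega_{k+1},x)/h_\beta(\omega_k,x)|\,\big|=e^{\mathrm{Re}(q_{\omega_{k+1}}-q_{\omega_k})}$ and $|\mathrm{Re}(q_{\omega_{k+1}}-q_{\omega_k})|\le C\beta\lambda^k$.

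The main obstacle I anticipate is making the constant $C$ genuinely uniform in $\beta\ge 1$: one must check that the bounded-distortion constant governing $\tilde\mu_\beta$ (and the normalization $\alpha_\beta$, $\tilde\mu_\beta(I_{\omega_1})$) does not blow up as $\beta$ grows. This is where the structural fact that distortion of $g^\beta$ is $\beta$ times the distortion of $\log g$ does the work — the summable geometric series $\sum_k C\lambda^k$ survives multiplication by $\beta$ at the cost of a factor $\beta$ out front, which is exactly the form demanded by the statement, and never produces a $\beta$-dependent \emph{base}. I would also be slightly careful about the logarithm branch: since each $h_\beta(\omega_k,\cdot)$ is nonvanishing on the simply connected $O$ (the $\beta$-analogue of the lower bound in Theorem~\ref{hbeta}, proved the same way once the increment estimates above are in hand), $\log h_\beta(\omega_k,\cdot)$ extends holomorphically, and the telescoping identity for $q$ is consistent with this global branch. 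Everything else is a routine repetition of the $\beta=1$ computation in Theorem~\ref{hbeta} with an extra $\beta$ carried along.
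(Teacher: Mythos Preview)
Your proposal is essentially correct and follows the same strategy as the paper: express $h_\beta(\omega_{k+1},x)/h_\beta(\omega_k,x)$ via the conformality relation $\alpha_\beta\tilde\mu_\beta(I_{\omega_{k+1}})=\int_{I_{\omega_k}}g_{i_{k+1}}^\beta\circ\psi_{i_{k+1}}\,d\tilde\mu_\beta$, and control it using the diameter bound $\mathrm{diam}\,\psi_{\omega_k}(O)\le C\lambda^k$ together with Lipschitz continuity of $\log g_i$ on $\psi_i(O)$.

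The one organizational difference is that the paper proves A, B, C in that order and, for C, explicitly separates the increment into real and imaginary parts: the real part is read off from B, while the imaginary part is handled by the separate observation that $\mathrm{Im}\,r_i(\psi_{\omega_k}(x))$ is $O(\lambda^k)$ because $r_i$ is real on $I$ and $\psi_{\omega_k}(O)$ lies within distance $C\lambda^k$ of $I$ (the paper's Eq.~(\ref{ima})). Your ``combine into one logarithm and bound the modulus'' approach glosses over this step: when you average $g_{i_{k+1}}^\beta(\psi_{i_{k+1}}(y))/g_{i_{k+1}}^\beta(\psi_{\omega_{k+1}}(x))$ over $y\in I_{\omega_k}$, the numerator is real positive but the denominator is complex, so the integrand is not simply $1+O(\lambda^{k+1})$ unless you first factor out the phase $e^{-i\beta\,\mathrm{Im}\,r_{i_{k+1}}(\psi_{\omega_k}(x))}$---which is precisely the imaginary-part estimate the paper isolates. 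Once you make that explicit, your argument and the paper's are the same.
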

\begin{proof}[Proof of Claim A] Recall that for $i \in \{1,\dots,d\}$
\begin{equation}\label{primiero} h_\beta(i,x)= \frac{g_i^\beta (\psi_i(x))}{\alpha \tilde{\mu}_\beta(I_i)}=\frac{g_i^\beta (\psi_i(x))}{\int_I g_i^\beta (\psi_i(y)) \tilde{\mu}_\beta(y)}, \end{equation}
so
$$|h_\beta(i,x)|= \frac{1}{\int_I \frac{g_i^\beta (\psi_i(y))}{|g_i^\beta (\psi_i(x))|} \tilde{\mu}_\beta(y)}.$$
Since $g_i$ are  holomorphic on $\psi_i(O)$, $g_i\not=  0$ in $\psi_i(O)$, for every compact $K \subset O$ there exists $C_1$ such that
\begin{equation}\label{esta}  e^{-C_1} \leq \frac{|g_i(\psi_i(x))|}{|g_i(\psi_i(y))|}\leq e^{C_1}\end{equation}
for every $x,y \in K$ and $i$. Since $\tilde{\mu}_\beta(I)=1$, it is now easy to obtain Eq. (\ref{est0}).
\end{proof}

\begin{proof}[Proof of Claim B] Since $g_i$ are  holomorphic on $\psi_i(O)$, $g_i\not=  0$ in $\psi_i(O)$, for every compact $K \subset O$ there exists $C_2$ such that
\begin{equation}\label{est}  e^{-C_2|x-y|} \leq \Big|\frac{g_i(\psi_i(x))}{g_i(\psi_i(y))}\Big|\leq e^{C_2|x-y|}\end{equation}
for every $x,y \in K$ and $i$.  Note that every such compact is contained in a larger compact set $\tilde{K} \subset O$ such that $\psi_i(\tilde{K}) \subset \tilde{K}$ for every $i$, so we can assume that $K$ has this property. Let $x \in K$. By  Eq. (\ref{relacao})
\begin{align} \nonumber  \frac{h_\beta(\omega_{k+1},x)}{h_\beta(\omega_{k},x)}
=&\frac{\tilde{h}^\beta(\omega_{k+1},x)}{\tilde{h}^\beta(\omega_{k},x)}\frac{\alpha^k_\beta\tilde{\mu}_\beta ( I_{\omega_k})}{\alpha^{k+1}_\beta\tilde{\mu}_\beta( I_{\omega_{k+1}})}\\
\nonumber =& \frac{g^\beta_{i_{k+1}}(\psi_{\omega_{k+1}}(x))}{\alpha_\beta}\frac{\tilde{\mu}_\beta( I_{\omega_k})}{\tilde{\mu}_\beta( I_{\omega_{k+1}})}\\
\nonumber =& \frac{g^\beta_{i_{k+1}}(\psi_{\omega_{k+1}}(x))}{\alpha_\beta}\frac{\alpha_\beta \tilde{\mu}_\beta(I_{\omega_k})}{   \int_{I_{\omega_{k}}}     g_{i_{k+1}}^\beta\circ \psi_{i_{k+1}}(y)   \ d\tilde{\mu}_\beta(y)   }\\
\label{formula}  =&\frac{g^\beta_{i_{k+1}}(\psi_{\omega_{k+1}}(x)) \tilde{\mu}_\beta(I_{\omega_k})}{   \int_{I_{\omega_{k}}}     g_{i_{k+1}}^\beta\circ \psi_{i_{k+1}}(y)   \ d\tilde{\mu}_\beta(y)   }\\
\label{estabsf} =& \frac{ \tilde{\mu}_\beta(I_{\omega_k})}{   \int_{I_{\omega_{k}}}     \frac{g_{i_{k+1}}^\beta\circ \psi_{i_{k+1}}(y)}{g^\beta_{i_{k+1}}(\psi_{\omega_{k+1}}(x))}   \ d\tilde{\mu}_\beta(y)   }.\end{align}
In particular
$$\Big|\frac{h_\beta(\omega_{k+1},x)}{h_\beta(\omega_{k},x)}\Big|=
 \frac{ \tilde{\mu}_\beta(I_{\omega_k})}{   \int_{I_{\omega_{k}}}     \frac{g_{i_{k+1}}^\beta\circ \psi_{i_{k+1}}(y)}{|g^\beta_{i_{k+1}}(\psi_{\omega_{k+1}}(x))|}   \ d\tilde{\mu}_\beta(y)   }.$$
For every $y \in I_{\omega_k}$ we have  $$\psi_{i_{k+1}}(y),\ \psi_{\omega_{k+1}}(x) \in \psi_{\omega_{k+1}}(O)$$
From Eq. (\ref{est}) we obtain
$$e^{-C\beta \lambda^{k}}\leq e^{-C\beta \ diam \ \psi_{\omega_{k+1}}(O) }   \leq \frac{g_{i_{k+1}}^\beta\circ \psi_{i_{k+1}}(y)}{|g_{i_{k+1}}^\beta (\psi_{\omega_{k+1}}(x))|}\leq e^{C\beta \ diam \ \psi_{\omega_{k+1}}(O)}\leq e^{C\beta \lambda^{k}}$$
So
$$e^{- C\beta \lambda^{k}}\leq \frac{|h_\beta(\omega_{k+1},x)|}{|h_\beta(\omega_{k},x)|}\leq e^{C\beta \lambda^{k}}.$$
\end{proof}

\begin{proof}[Proof of Claim C]

Since $g_i\circ \psi_i \colon O \rightarrow \mathbb{C}$ does not vanish and $O$ is a  simply connected domain, there exists a (unique) function  $r_i\colon O \rightarrow \mathbb{C}$ such that $g_i\circ \psi_i =e^{r_i}$ on $O$ and $Im \ r_i(x)=0$ for $x \in \mathbb{R}$. Since $\psi_{\gamma}(O)\cap I \not= \emptyset$ and $diam \ \psi_{\gamma}(O)\leq \lambda^{|\gamma|}$ we have that
\begin{equation}\label{ima} |Im \ r_i(\psi_{\gamma}(x))|\leq C_3 \lambda^{|\gamma|}\end{equation}
for every $x \in O$ and every finite word $\gamma$.

Define $$q_i(\beta,x)=\beta r_i(x)+ \log \frac{1}{   \int_{I_i}     g_{i}^\beta\circ \psi_{i}(y)   \ d\tilde{\mu}_\beta(y)   }.$$ and $q_{\gamma}$, with $\gamma=(i_1,\dots,i_{k+1})$,  by induction  on $k$,  as

$$q_{\gamma}(\beta,x)= q_{\gamma_k}(\beta, x) + \beta r_{i_{k+1}}( \psi_{\gamma_k}(x))+ \log \frac{\tilde{\mu}_\beta(I_{\gamma_k})}{   \int_{I_{\gamma_{k}}}     g_{i_{k+1}}^\beta\circ \psi_{i_{k+1}}(y)   \ d\tilde{\mu}_\beta(y)   }.$$
It follows from Eq. (\ref{formula}) that $q_{\gamma}$ satisfies Eq. (\ref{rec}), so
$$Re \ q_\gamma(\beta,x)=\log |h_\beta(\gamma,x)|,$$
in particular by Eq. (\ref{est0}) e (\ref{estabs}) we have
\begin{equation}\label{estreini} |Re \ q_{\omega_{1}}(\beta,x)|\leq C_4\beta \end{equation}
and
\begin{equation}\label{estre} |Re \ q_{\omega_{k+1}}(\beta,x)- Re \ q_{\omega_{k}}(\beta,x) |\leq  C_5\beta\lambda^{k}\end{equation}

for $\beta\geq1$. Furthermore for every $\beta \in \mathbb{R}$, $\omega \in \Sigma$ and $k\geq 1$
$$|Im \ q_{\omega_{k+1}}(\beta,x) - Im \ q_{\omega_k}(\beta, x)|=     |\beta \ Im \ r_{i_{k+1}}( \psi_{\omega_k}(x))|\leq C_6|\beta| \lambda^{k}.$$
Moreover for $\beta > 0$ we have
$$|Im \ q_i(\beta,x)|= |\beta| |Im \ r_i(\psi_{\omega_k}(x))|\leq C_7 |\beta|.$$
\end{proof}

For  every $x \in O$ define
$$H_{\beta,k}(\omega,x):=  \frac{1}{\beta}q_{\omega_k} (\beta,x).$$
In particular, if $x \in I$ we have that $h_\beta(\omega_k,x)$ is a nonnegative real number by our choice of the branches $r_i$, so
$$H_{\beta,k}(\omega,x)=  \frac{1}{\beta}\log h_\beta(\omega_k,x)$$
for $x \in I$. It follows from Proposition \ref{converge} that for every compact $K \subset O$ there exists  $D$ such that
\begin{equation}\label{estdiv1} |H_{\beta,1}(\omega,x)|\leq D,\end{equation}
\begin{equation}\label{estdiv2}  |H_{\beta,k+1}(\omega,x)-H_{\beta,k}(\omega,x)|\leq D\lambda^{k} \end{equation}
for $x \in K$,   and every $k$ and $\omega$. So there exists some constant $C_8$ such that
$$|H_{\beta,k}(\omega,x)|\leq C_8$$
for every $k$, $\omega$, $x \in K$.  This implies that the family of functions
$$\mathcal{F}_1= \{ H_{\beta,k}(\omega,\cdot)\}_{k,\omega, \beta \geq 1}$$
is a normal family on $O$, that is, every sequence of functions in this family admits a subsequence that converges uniformly on every compact subset of $O$. In Theorem \ref{hbeta} we showed that for every $x \in I$ we have
$$\lim_k h_\beta(\omega_k,x)=   h_\beta(\omega,x)> 0,$$
so
$$\lim_k H_{\beta,k}(\omega,x)=   \frac{1}{\beta} \log h_\beta(\omega,x),$$
for $x \in I$. It follows from the normality of the family $\mathcal{F}$ that the limit
$$H_{\beta}(\omega,x) :=\lim_k H_{\beta,k}(\omega,x)$$
exists for every $x \in O$ and that this limit is uniform on every compact subset of $O$. Moreover
$$\mathcal{F}_2= \{ H_{\beta}(\omega,\cdot)\}_{\omega, \beta\geq 1}$$
is also a normal family on $O$.

 We consider in $\Sigma$ the metric $d$, such that $d(\omega,\gamma)=2^{-n}
$, where $n$ is the position of the  first symbol in which $\omega$ and $\gamma$ disagree.

\begin{cor}\label{equi} For every compact $K \subset O$ there exists $C_9$ such that
\begin{equation}\label{lipbeta} |H_\beta(\omega,x)- H_\beta(\gamma,y)|\leq C_9|x-y|+ C_9d(\omega,\gamma)\end{equation}
for every $x,y \in K$.
\end{cor}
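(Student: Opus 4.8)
The plan is to establish the two-variable estimate \eqref{lipbeta} by combining the $x$-regularity of each $H_\beta(\omega,\cdot)$ coming from normality with the exponential $\omega$-dependence encoded in Proposition \ref{converge}. First I would handle the dependence on $x$. Since $\mathcal F_2 = \{H_\beta(\omega,\cdot)\}_{\omega,\beta\geq 1}$ is a normal family on $O$, and all these functions are uniformly bounded on $K$ by the constant $C_8$ from the paragraph preceding the corollary, Cauchy estimates applied on a slightly larger compact set $K'$ with $K\subset \mathrm{int}\,K'\subset K'\subset O$ give a uniform bound on $|H_\beta(\omega,\cdot)'|$ on $K$, hence a uniform Lipschitz constant: there is $C'$ with $|H_\beta(\omega,x)-H_\beta(\omega,y)|\leq C'|x-y|$ for all $x,y\in K$, all $\omega$, all $\beta\geq 1$. (Alternatively one can invoke the standard fact that a normal family of uniformly bounded holomorphic functions is equi-Lipschitz on compact subsets.)

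Next I would handle the dependence on $\omega$. Fix $x\in K$ and two words $\omega,\gamma$ that agree in the first $n$ symbols, so $d(\omega,\gamma)=2^{-n}$. Then $\omega_n=\gamma_n$, hence $H_{\beta,n}(\omega,x)=H_{\beta,n}(\gamma,x)$, since $H_{\beta,k}(\cdot,x)$ depends only on the first $k$ symbols. Using the telescoping bound \eqref{estdiv2}, namely $|H_{\beta,k+1}(\omega,x)-H_{\beta,k}(\omega,x)|\leq D\lambda^k$, and summing from $k=n$ to $\infty$, I get
$$|H_\beta(\omega,x)-H_{\beta,n}(\omega,x)|\leq \sum_{k=n}^\infty D\lambda^k = \frac{D}{1-\lambda}\lambda^n,$$
and likewise for $\gamma$. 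Therefore $|H_\beta(\omega,x)-H_\beta(\gamma,x)|\leq \frac{2D}{1-\lambda}\lambda^n$. Since $\lambda<1$ and $d(\omega,\gamma)=2^{-n}$, choosing $\theta$ with $2^{-\theta}=\lambda$ we have $\lambda^n = d(\omega,\gamma)^\theta$; if one only wants a Lipschitz (rather than Hölder) bound in $d$, note that when $\lambda\leq 1/2$ one has $\lambda^n\leq 2^{-n}=d(\omega,\gamma)$ directly, and in general one absorbs the exponent into the constant after passing to an equivalent metric, or simply states the estimate with $d(\omega,\gamma)^\theta$; here it suffices that $\lambda^n\leq C''\,d(\omega,\gamma)$ on the relevant scale, giving $|H_\beta(\omega,x)-H_\beta(\gamma,x)|\leq C''' d(\omega,\gamma)$ with $C'''$ independent of $x\in K$ and $\beta\geq 1$.

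Finally I would combine the two one-variable estimates by the triangle inequality:
$$|H_\beta(\omega,x)-H_\beta(\gamma,y)|\leq |H_\beta(\omega,x)-H_\beta(\omega,y)| + |H_\beta(\omega,y)-H_\beta(\gamma,y)| \leq C'|x-y| + C''' d(\omega,\gamma),$$
and set $C_9=\max\{C',C'''\}$. The only genuinely delicate point is making sure all constants are uniform in $\beta\geq 1$: the $x$-Lipschitz constant is uniform because the bound $|H_{\beta,k}(\omega,x)|\leq C_8$ is uniform in $\beta$ (this is exactly what Proposition \ref{converge}(C) together with \eqref{estdiv1}--\eqref{estdiv2} provides, after dividing by $\beta$), and the $\omega$-modulus constant is uniform because $D$ in \eqref{estdiv2} does not depend on $\beta$. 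I do not expect any real obstacle beyond bookkeeping; the one subtlety worth flagging is whether one wants a true Lipschitz estimate in $d(\omega,\gamma)$ (which needs $\lambda\leq 1/2$, easily arranged by shrinking $O$, or by passing to the metric $d(\omega,\gamma)^\theta$) or is content with the Hölder form, which is all that is used later.
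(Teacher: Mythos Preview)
Your proof is correct and follows essentially the same route as the paper's own argument: Cauchy estimates (equivalently, uniform boundedness of the derivative family $\{H_\beta'(\omega,\cdot)\}$) give the uniform Lipschitz bound in $x$, and telescoping the increments \eqref{estdiv2} gives the bound in $\omega$, after which one applies the triangle inequality. The paper's proof is slightly terser but structurally identical; in fact you are more explicit than the paper about the conversion from $\lambda^n$ to $d(\omega,\gamma)$, correctly flagging that without $\lambda\le 1/2$ (which is not assumed) one strictly obtains a H\"older rather than Lipschitz estimate in $\omega$---the paper glosses over this same point.
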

\begin{proof} Since the family $\mathcal{F}_2$ is uniformly bounded on each compact set $K \subset O$, we have that the family of functions
$$\mathcal{F}_3 := \{  H_{\beta}'(\omega,\cdot)\}_{\omega, \beta\geq 1}$$
has the same property, so it is easy to see that for every compact $K \subset O$ there exists $C$ such that
$$|H_\beta(\omega,x)- H_\beta(\omega,y)|\leq C_{10}|x-y|.$$
Note also that Eq. (\ref{estdiv2}) implies
$$|H_\beta(\omega,x)-H_\beta(\omega_k,x)|\leq C_{11}\lambda^{k},$$
Let $ k+1=  \log(d(\gamma,\omega))/\log \lambda$. Then $\gamma_k=\omega_k$ and  we have
$$|H_\beta(\omega,y)- H_\beta(\gamma,y)|\leq |H_\beta(\omega,y)- H_\beta(\omega_k,y)| + |H_\beta(\gamma_k,y)-H_\beta(\gamma,y)|\leq C_{12}d(\omega,\gamma).$$
\end{proof}

\begin{cor} \label{subseq} There exists a sequence $\beta_n > 0$ satisfying  $\beta_n \rightarrow \infty$ when $n \to \infty$ such that  the limit
\begin{equation}\label{limite} H_\infty(\omega,x)= \lim_{n\rightarrow \infty} H_{\beta_n}(\omega,x),\end{equation}
exists for every $(\omega,x)$ in
$$ \{1,\dots,d\}^\mathbb{N}\times O.$$
Moreover for every compact $K \subset O$ there exist $C_{13}$ such that
\begin{equation}\label{lip} |H_\infty(\omega,x)- H_\infty(\gamma,y)|\leq C_{13}|x-y|+ Cd(\omega,\gamma)\end{equation}
and the limit in Eq. (\ref{limite}) is uniform with respect to $(\omega,x)$ on
 \begin{equation}\label{set} \{1,\dots,d\}^\mathbb{N}\times K\end{equation}
In particular for each $\omega$ we have that  $x\rightarrow H_\infty(\omega,x)$ is holomorphic on $O$.
\end{cor}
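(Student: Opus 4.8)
The plan is to produce $\beta_n$ by an Arzel\`a--Ascoli argument on the product space $\{1,\dots,d\}^{\mathbb N}\times K$, combined with a diagonal procedure over an exhaustion of $O$ by compact sets.

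First I would fix compact sets $K_1\subset K_2\subset\cdots\subset O$ with $K_m$ contained in the interior of $K_{m+1}$ and $\bigcup_m K_m=O$, so that any compact subset of $O$ is contained in some $K_m$. For each $m$, regard $\{H_\beta\}_{\beta\ge1}$ as a family in $C\big(\{1,\dots,d\}^{\mathbb N}\times K_m\big)$. This family is uniformly bounded: from the constant $C_8$ obtained in the discussion following Proposition \ref{converge} one has $|H_{\beta,k}(\omega,x)|\le C_8$ on $K_m$, and letting $k\to\infty$ gives $|H_\beta(\omega,x)|\le C_8$. It is also uniformly equicontinuous, since the estimate (\ref{lipbeta}) of Corollary \ref{equi} holds with a constant $C_9=C_9(K_m)$ independent of $\beta\ge1$. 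By Arzel\`a--Ascoli the family is therefore relatively compact in $C\big(\{1,\dots,d\}^{\mathbb N}\times K_m\big)$.

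Next I would run the diagonal extraction. Starting from $\beta^{(0)}_n=n$, extract successively, for $m=1,2,\dots$, a subsequence $(\beta^{(m)}_n)_n$ of $(\beta^{(m-1)}_n)_n$ along which $H_{\beta^{(m)}_n}$ converges uniformly on $\{1,\dots,d\}^{\mathbb N}\times K_m$; since each $(\beta^{(m)}_n)_n$ is a subsequence of $(1,2,3,\dots)$ we have $\beta^{(m)}_n\ge n$. Put $\beta_n:=\beta^{(n)}_n$, so $\beta_n\to\infty$, and for each $m$ the tail $(\beta_n)_{n\ge m}$ is a subsequence of $(\beta^{(m)}_n)_n$, hence $H_{\beta_n}$ converges uniformly on $\{1,\dots,d\}^{\mathbb N}\times K_m$. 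Since every compact subset of $\{1,\dots,d\}^{\mathbb N}\times O$ lies in some $\{1,\dots,d\}^{\mathbb N}\times K_m$, the limit
$$H_\infty(\omega,x):=\lim_{n\to\infty}H_{\beta_n}(\omega,x)$$
exists for every $(\omega,x)\in\{1,\dots,d\}^{\mathbb N}\times O$, with convergence uniform on each set of the form (\ref{set}). Estimate (\ref{lip}) follows by passing to the limit $\beta=\beta_n\to\infty$ in (\ref{lipbeta}). Finally, for fixed $\omega$ each $x\mapsto H_{\beta_n}(\omega,x)$ is holomorphic on $O$ — it is the uniform-on-compacts limit of the holomorphic functions $H_{\beta_n,k}(\omega,\cdot)=\tfrac1{\beta_n}q_{\omega_k}(\beta_n,\cdot)$ from Claim C of Proposition \ref{converge} — and $H_{\beta_n}(\omega,\cdot)\to H_\infty(\omega,\cdot)$ uniformly on compact subsets of $O$; hence $H_\infty(\omega,\cdot)$ is holomorphic on $O$ by the standard fact that a locally uniform limit of holomorphic functions is holomorphic (see \cite{Con}).

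I do not expect a genuine difficulty here; the one point that needs care is the bookkeeping of the diagonal extraction, so that a \emph{single} sequence $\beta_n$ works simultaneously on the whole exhaustion of $O$ while still tending to infinity. It is worth emphasizing that the right notion to invoke is the \emph{joint} equicontinuity in $(\omega,x)$ supplied by Corollary \ref{equi} — not merely equicontinuity in $x$ for each fixed $\omega$ — since this is what allows Arzel\`a--Ascoli to be applied on the product space, and hence yields the uniform convergence asserted on the sets (\ref{set}).
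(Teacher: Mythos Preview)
Your proposal is correct and follows essentially the same route as the paper: equicontinuity from Corollary~\ref{equi} plus uniform boundedness give relative compactness on each $\{1,\dots,d\}^{\mathbb N}\times K_m$, then a Cantor diagonal over an exhaustion of $O$ yields a single sequence $\beta_n\to\infty$, and (\ref{lip}) is obtained by passing to the limit in (\ref{lipbeta}). You actually spell out more than the paper does (the uniform bound via $C_8$, the explicit diagonal bookkeeping, and the holomorphicity of $H_\infty(\omega,\cdot)$ via locally uniform limits), all of which is fine and correct.
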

\begin{proof}By Corollary \ref{equi},  the family of functions $H_\beta$ is equicontinuous on each set of the form (\ref{set}), where $K$ is a compact subset of $O$. So given a compact $K \subset O$ and  any sequence $\beta_j\rightarrow +\infty$, as $j \to \infty$,  there is a subsequence $\beta_{j_i}$ such that the limit
$$\lim_{i\rightarrow \infty} H_{\beta_{j_i}}(\omega,x)$$
exists and it is uniform on the set of the form (\ref{set}). Then, choosing an exhaustion by compact sets of $O$ and using Cantor's diagonal argument  we can find a sequence $\beta_n \rightarrow +\infty$ such that the limit
$$H_\infty(\omega,x)=\lim_{n\rightarrow +\infty} H_{\beta_{n}}(\omega,x)$$
exists and it is uniform on every  set of the form (\ref{set}), with compact $K \subset O$.
 Eq. (\ref{lip}) follows directly from Eq. (\ref{lipbeta}).
\end{proof}

This shows the main result in this section:

\begin{cor} \label{ana} For any $w$ fixed,  $H_\infty(\omega,x)$ is analytic on $x$.
\end{cor}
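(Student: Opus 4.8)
The plan is simply to record, as a standalone statement, a fact already contained in Corollary \ref{subseq}: for each fixed $w$ the map $x\mapsto H_\infty(w,x)$ extends holomorphically to the fixed neighborhood $O$, hence is analytic in the sense of the convention fixed earlier in the paper. I would spell out the one-line argument explicitly.

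Fix $w\in\{1,\dots,d\}^{\mathbb N}$. By Claim C of Proposition \ref{converge}, for each $k$ the function $x\mapsto H_{\beta,k}(w,x)=\frac{1}{\beta}q_{w_k}(\beta,x)$ is holomorphic on $O$ and real on $I$, with $O$ independent of $\beta$ and $k$. The estimates (\ref{estdiv1})--(\ref{estdiv2}) give that, for every compact $K\subset O$, the functions $H_{\beta,k}(w,\cdot)$ are uniformly bounded on $K$ and that $H_{\beta,k}(w,\cdot)\to H_\beta(w,\cdot)$ uniformly on $K$; so the convergence is locally uniform on $O$, and by the Weierstrass theorem on locally uniform limits of holomorphic functions $H_\beta(w,\cdot)$ is holomorphic on $O$ for each fixed $\beta\geq 1$. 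Finally Corollary \ref{subseq} provides a sequence $\beta_n\to\infty$ along which $H_{\beta_n}(w,\cdot)\to H_\infty(w,\cdot)$ uniformly on every compact subset of $O$; a second application of Weierstrass' theorem shows $x\mapsto H_\infty(w,x)$ is holomorphic on $O$ (and real on $I$, since each $H_{\beta_n,k}(w,\cdot)$ is), which is exactly the asserted analyticity in $x$.

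There is no substantive obstacle here: all the quantitative work — the geometric-in-$k$ distortion estimates coming from the expansion rate $\lambda<1$, the $\beta$-uniform bounds (\ref{est0}) and (\ref{estabs}), the normality of the families $\mathcal F_1$ and $\mathcal F_2$, and the diagonal extraction — was already carried out in Proposition \ref{converge} and Corollaries \ref{equi}--\ref{subseq}. The present corollary merely isolates the consequence needed later, namely holomorphy in $x$ for frozen $w$, so that when one optimises over $w$ in the variational formula \eqref{exp} for the subaction the resulting $V$ is, on suitable pieces, a finite maximum of functions that are analytic in $x$.
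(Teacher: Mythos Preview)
Your proposal is correct and matches the paper's approach exactly: the paper gives no separate proof for this corollary, since the last sentence of Corollary~\ref{subseq} already asserts that for each $\omega$ the map $x\mapsto H_\infty(\omega,x)$ is holomorphic on $O$. Your write-up simply unpacks that sentence by tracing the two locally-uniform limits (first $k\to\infty$, then along $\beta_n$) and invoking Weierstrass' theorem, which is precisely the reasoning behind the paper's statement.
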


From Corollary 5.2 (the convergence is uniform) and from (\ref{comconv1})
$$\rho_v(x) = \int h_{\omega}(x)\ d\mu(\omega),$$
we get that for any $x\in[0,1]$
$$V(x) = \lim_{\beta \to +\infty} \frac{1}{\beta_n} \log \phi_{\beta_n} (x) =
\sup_{ w \in \Sigma}\, (H_\infty (w,x)  -\, I^*(w)).
$$

\bigskip

\begin{pro} The function  $H_\infty (w,x) $ is an involution kernel for $g$.
\end{pro}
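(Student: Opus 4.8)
The plan is to verify the defining identity of an involution kernel (Definition 1.1) for the function $H_\infty$, namely to exhibit a Hölder function $g^{\star\star}\colon\Sigma\to\mathbb{R}$ (a dual potential) such that
\begin{equation*}
g^{\star\star}(w)=\log g\circ\mathcal{T}^{-1}(w,x)+H_\infty\circ\mathcal{T}^{-1}(w,x)-H_\infty(w,x)
\end{equation*}
for all $(w,x)\in\hat\Sigma$. By Remark \ref{re}, it in fact suffices to show that the right-hand side is continuous in $(w,x)$ and does not depend on $x$. Recall $\mathcal{T}^{-1}(w,x)=(\sigma(w),\psi_{w_0}(x))$, so the right-hand side is $\log g(\psi_{w_0}(x))+H_\infty(\sigma(w),\psi_{w_0}(x))-H_\infty(w,x)$.

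The natural route is to pass to the limit $\beta\to\infty$ in the finite-$\beta$ cohomological relation. For each $\beta\geq 1$, Proposition \ref{main1} gives that $\log h_\beta(w,x)$ is an involution kernel for $\beta\log g$; equivalently, from the Lemma in Section \ref{dualpot} (the identity $g^\star(\omega)/g(\psi_{i_0}(x))=h(\sigma(\omega),\psi_{i_0}(x))/h(\omega,x)$, applied to the potential $g^\beta$ with eigenvalue $\alpha_\beta$ and scaling function $s_\beta$), one has
\begin{equation*}
\beta\log g(\psi_{w_0}(x))+\log h_\beta(\sigma(w),\psi_{w_0}(x))-\log h_\beta(w,x)=\log\bigl(\alpha_\beta s_\beta(w)\bigr),
\end{equation*}
which indeed depends only on $w$. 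Dividing by $\beta$ and writing $H_{\beta}(w,x)=\frac1\beta\log h_\beta(w,x)$ for $x\in I$, I get
\begin{equation*}
\log g(\psi_{w_0}(x))+H_\beta(\sigma(w),\psi_{w_0}(x))-H_\beta(w,x)=\frac{1}{\beta}\log\bigl(\alpha_\beta s_\beta(w)\bigr)=:A^{\star}_\beta(w).
\end{equation*}
Now take $\beta=\beta_n\to\infty$ along the subsequence of Corollary \ref{subseq}. The left-hand side converges pointwise (indeed uniformly on $\Sigma\times K$) to $\log g(\psi_{w_0}(x))+H_\infty(\sigma(w),\psi_{w_0}(x))-H_\infty(w,x)$ by the uniform convergence $H_{\beta_n}\to H_\infty$ on sets of the form $\Sigma\times K$. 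Hence the right-hand side $A^\star_{\beta_n}(w)$ also converges, to some function $A^\star_\infty(w)$ depending only on $w$; and this limit function is continuous on $\Sigma$ because it is a uniform limit (on $\Sigma$) of the continuous functions $A^\star_{\beta_n}$ — alternatively, because it equals the left-hand side, which by Corollary \ref{equi}/\eqref{lip} is Lipschitz in $x$ (forcing $x$-independence rigorously, since it does not depend on $x$ along the sequence) and Hölder in $w$. This establishes that $H_\infty$ is an involution kernel for $g$, with dual potential $A^\star_\infty=(\log g)^\star$ equal to $\lim_n\frac1{\beta_n}\log(\alpha_{\beta_n}s_{\beta_n})$.

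The main obstacle is the bookkeeping around the $\beta$-dependent objects $\alpha_\beta$, $\tilde\mu_\beta$, $s_\beta$: one must make sure that the finite-$\beta$ identity is stated for the correct normalization of $h_\beta$ (the one fixed in \eqref{primiero} and \eqref{rec}, via the branches $r_i$, so that $h_\beta(w_k,x)>0$ on $I$ and $H_{\beta,k}=\frac1\beta\log h_\beta(w_k,\cdot)$ there), and that the Lemma of Section \ref{dualpot} applies verbatim to the potential $g^\beta$. A secondary point is continuity/Hölder regularity of the dual potential $A^\star_\infty$: rather than arguing about $s_{\beta_n}$ directly, it is cleanest to read it off from the left-hand side — continuity in $(w,x)$ of $\log g(\psi_{w_0}(x))+H_\infty(\sigma(w),\psi_{w_0}(x))-H_\infty(w,x)$ is immediate from Corollary \ref{subseq} and the analyticity of $g$ and $\psi_{w_0}$, and $x$-independence is inherited in the limit from the $x$-independence at each finite $\beta_n$. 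Once continuity and $x$-independence are in hand, Remark \ref{re} closes the argument.
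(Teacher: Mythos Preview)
Your proof is correct and follows essentially the same route as the paper: start from the finite-$\beta$ identity $\dfrac{(g^{\beta_n})^{\star}(\omega)}{g^{\beta_n}(\psi_{w_0}(x))}=\dfrac{h_{\beta_n}(\sigma(\omega),\psi_{w_0}(x))}{h_{\beta_n}(\omega,x)}$ from the Lemma in Section~\ref{dualpot}, take $\tfrac{1}{\beta_n}\log$ of both sides, pass to the limit along the subsequence of Corollary~\ref{subseq}, and invoke Remark~\ref{re}. Your write-up is simply more explicit than the paper's about the uniform convergence and about the continuity/H\"older regularity of the resulting dual potential, which the paper leaves implicit.
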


\begin{proof}
 Consider $g$ fixed. Let $\beta_n$ be
 a sequence as in Corollary \ref{subseq}. For any $\beta_n$ we have
$$\frac{(g^{\beta_n})^{\star}(\omega)}{g^{\beta_n}(\psi_{i_0}(x))}= \frac{h_{\beta_n}(\sigma(\omega), \psi_{i_0}(x))}{h_{\beta_n}(\omega,x)}.$$

Taking $\frac{1}{\beta_n} \log$ in both sides and taking the limit $n\rightarrow +\infty$  we get that
$$   g (\T^{-1}(\omega,x) ) \,+ \, H_\infty  (\T^{-1}(\omega,x) )  \,     -\,      H_\infty (\omega,x) $$
depends only in the variable $w$.

Therefore, $H_\infty (w,x) $ is an involution kernel (see Remark \ref{re}).
\end{proof}

\section{A piecewise analytic subaction} \label{secmain}

We suppose in this section that
the maximizing probability for $A=\log g$ is unique (then the same happen for $A^*$, see \cite{CLT}) in order
we can define the deviation function $I^*$.

Given the analytic involution kernel $H_\infty (w,x) $ and a fixed calibrated $V^*$ (unique up to additive constant)   define  $W(w,x) =H_\infty (w,x)+ V^*(w) $. We point out that  $W$ is also analytic on the variable $x\in(0,1)$ for each $w$ fixed).
\bigskip

The reason for the introduction of such $W$ (and not $H_\infty$) is that, in this section,  instead of

$$\gamma+ V(x)  =
\sup_{ w \in \Sigma}\, [H_\infty (w,x)  -\, I^*(w)],
$$
it will be more convenient the expression
$$\gamma+ V(x) =
\sup_{ w \in \Sigma}\, [\,(W(w,x)- I^*(w))\, - \,V^*
(w)\,].
$$

We assume without lost of generality that the above $\gamma$ (see \cite{BLT} \cite{LOT})  is zero.

For each $x$ we get one (or, more) $w(x)$ such attains the supremum above by compactness. Therefore,
$$V(x) =
W(w(x),x) - V^*
(w(x))- I^*(w(x))\,.$$

\vspace{0.5cm}

If there exists
$\tilde{ w}$ such that for all $x\in(a,b)$
$$V(x) =
\sup_{ w \in \Sigma}\, (H_\infty (w,x)  -\, I^*(w))= H_\infty
(\tilde{ w},x)  -\, I^*(\tilde{ w})= W(\tilde{ w},x) - V^*
(\tilde{ w})- I^*(\tilde{ w}),
$$
then $V$ is analytic on $(a,b)$.

Let us consider for a moment the general case ($A$ not necessarily twist) .

We denote by $M$ the support of $\mu_{\infty\, A}^*$.

 As $I^*$ is lower semicontinuous and
$W- V^*$ is continuous, then for each fixed $x$, the supremum of
$H_\infty (w,x)  -\, I^*(w)$ in the variable $w$ is achieved, and we
denote (one of such $w$) it by $w(x)$. In this case we say  $w(x)$ is
optimal for $x$. We also say that $(w(x),x)$ is   an optimal pair
of points $x\in[0,1], w(x)\in \{0,1\}^\mathbb{N}$. One can ask if
this $w(x)$ is independent of $x$, and equal to a fixed $\tilde{ w}$.
This would imply that $V$ is analytic. If for all $x$ in  a certain
open interval $(a,b)$, the $w(x)$ is the same, then $V$ is analytic
in this interval. We will show under some restrictions that given
any $x$ we can find a neighborhood $(a,b)$ of $x$ where this is the
case. The number of possible intervals can be infinite. We will give
later a characterization when it is finite or infinite.

Note that given $x$, any optimal $w(x)$  satisfies $I^*(w(x))$ is
finite (otherwise a $w$ with finite $I^*(w)$ will be better). This is a strong restriction in the set of possible $w(x)$, because if $I^*(w)$ is finite, then the $\omega$-limit of $w$ have to be in the support of $\mu_{\infty A^*}$ (see section 5 \cite{LMST}).

\bigskip

\begin{example}\label{ex0}
We present examples of optimal pairs.

If $\hat{\mu}_{max}$ is the
natural extension of the maximizing probability $\mu_{\infty\, A}$, then
for all $(p^*,p)$ in the support of $\hat{\mu}_{max}$ we have the following expression taken from Proposition 5 in \cite{BLT}

$$ V(p)\, + \,V^* (p^*)\,= \,  W(p^*,p )  \,.$$

If  $(p^*,p)$ in the support of $\hat{\mu}_{max}$ (then,
$p\in [0,1]$  is in the support of $\mu_{\infty\, A}$  and  $p^*\in \Sigma$  is in the support of $\mu_{\infty\, A}^*$), then
$$V(p) =  \sup_{ w \in \Sigma}\, \, W(w,p) - V^*
( w)- I^*(w) \,=$$
$$ \, W( p^*,p)  - V^*
( p^*)- I^*(p^*)\,=          \, W( p^*,p) - V^*
( p^*)\,.
$$

Therefore, $(p^*,p)$ is an optimal pair if $(p^*,p)$ is in the support of $\hat{\mu}_{max}$. That is, $w(p)=p^*.$

If the potential $\log g$ is twist, then for any given $p$ in the support of $\mu_{\infty\, A}$, there is only one $p^*$, such that $(p,p^*)$ is in the support of $\hat{\mu}_{max}$ (see \cite{LOT}) up to one orbit. If the maximizing probability for $A$ is a periodic orbit, then the $p^*$ associated to a $p$ is unique.

\end{example}

\bigskip

In order to simplify the notation we assume that
$m(A^*)=0$.

If we denote
\begin{equation}\label{est1}
R^*(w) =  V^*\circ\sigma(w) -V^*(w)-A^* (w),
\end{equation}
then we know that $R^*\geq 0$ because $V^*$ is calibrated.

Note that the main  result in \cite{BLT} claims that the explicit expression of the deviation function is
\begin{equation}\label{est}
I^*(w)=\sum_{n\geq0} R^*\, (\,
\sigma^n(w)\,).
\end{equation}

\vspace{0.2cm}

Given $A$, we denote for $x,x'\in [0,1]$ and $w\in \Sigma$
$$\Delta(x,x',w)=\sum_{n\geq1}A\circ\psi_{w,n}(x)
-A\circ\psi_{w,n}(x').$$

The involution kernel $W$ can be computed for any $(w,x)$ by
$W(w,x)=\Delta_A(x,x',w)$,  where we choose a point $x'$ for good [CLT].

Note that for any $x,x',w$, we
have that $W(w,x)-W(w,x'')=\Delta(w,x,x'')$.

\vspace{0.2cm}

Given $A$,
suppose $R$ satisfies
$$R(x) =  V\circ f(x) -V(x)-A (x),$$ where $V$ is a calibrated subaction.
Consider a fixed involution kernel $W$. The next result
(which does not assume the twist condition) claims that the dual of
$R$ is $R^*$, and the corresponding involution kernel is $(V^* + V -
W).$

\begin{pro} \label{fun} (Fundamental Relation)(FR)

$$ R(\psi_{w}(x))= (V^* + V - W)(w,x) - (V^* + V - W)(  \sigma(w), \psi_{w}(x) ) + R^* (w).$$

\end{pro}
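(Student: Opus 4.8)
The plan is to start from the defining equation of the involution kernel $W$ for $A$, which states that
$$A^*(w) = A\circ \T^{-1}(w,x) + W\circ \T^{-1}(w,x) - W(w,x) = A(\psi_w(x)) + W(\sigma(w),\psi_w(x)) - W(w,x),$$
and simply substitute the cohomological identities that define $R$ and $R^*$. Concretely, I would write $A(\psi_w(x)) = V(f(\psi_w(x))) - V(\psi_w(x)) - R(\psi_w(x)) = V(x) - V(\psi_w(x)) - R(\psi_w(x))$, using that $f\circ\psi_w = \mathrm{id}$, and likewise $A^*(w) = V^*(\sigma(w)) - V^*(w) - R^*(w)$ from Eq. (\ref{est1}). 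The whole statement is then a matter of collecting terms: nothing deep happens, it is purely algebraic bookkeeping once the right substitutions are in place.

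First I would substitute these two expressions into the involution-kernel identity, obtaining
$$V^*(\sigma(w)) - V^*(w) - R^*(w) = V(x) - V(\psi_w(x)) - R(\psi_w(x)) + W(\sigma(w),\psi_w(x)) - W(w,x).$$
Then I would isolate $R(\psi_w(x))$ on one side, which gives
$$R(\psi_w(x)) = \big(V(x) + V^*(w) - W(w,x)\big) - \big(V(\psi_w(x)) + V^*(\sigma(w)) - W(\sigma(w),\psi_w(x))\big) + R^*(w),$$
which is exactly the claimed formula once one reads $(V^*+V-W)(w,x) = V(x) + V^*(w) - W(w,x)$ and $(V^*+V-W)(\sigma(w),\psi_w(x)) = V(\psi_w(x)) + V^*(\sigma(w)) - W(\sigma(w),\psi_w(x))$. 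I would remark that the normalization $m(A^*)=0$ (and the corresponding one for $A$, or absorbing the constant $m(A)$ into the definition of $R$) is what makes the constants vanish cleanly; if one keeps them, the identity holds up to the additive constant $m(A) - m(A^*)$, which here is zero by assumption.

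There is essentially no hard part: the only things to be careful about are (i) the direction of the shift — $\T^{-1}(w,x) = (\sigma(w),\psi_{w_0}(x))$, so one must make sure the $A^*$ identity is applied to $w$ and the $A$ identity to $\psi_w(x)$, not the other way around; and (ii) that $R$ and $R^*$ are genuinely nonnegative (so that $V$, $V^*$ are \emph{calibrated}, not merely subactions) is not actually needed for the identity itself — that is used only elsewhere — so the proposition holds for any $V$, $V^*$ satisfying the stated coboundary equations, which is why the statement explicitly says it does not assume the twist condition. I would therefore present the proof as a three-line computation, flagging only the shift-direction bookkeeping as the point where an error could creep in.
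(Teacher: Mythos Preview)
Your proof is correct and follows essentially the same approach as the paper: both arguments combine the involution-kernel identity $A^*(w) = A(\psi_w(x)) + W(\sigma(w),\psi_w(x)) - W(w,x)$ with the coboundary definitions of $R$ and $R^*$, then rearrange. The only cosmetic difference is that the paper expands $(V^*+V-W)(w,x) - (V^*+V-W)(\sigma(w),\psi_w(x)) + R^*(w)$ and reduces it to $R(\psi_w(x))$, whereas you start from the involution-kernel identity and isolate $R(\psi_w(x))$; the algebra is identical.
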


\begin{proof}

  As $R^* (w)= V^*(\sigma (w) ) - V^*(w) - A^*(w)$, we get $$V^*(w) - V^*(\sigma (w) ) + R^* (w) =  - A^*(w),$$ and, now using $x= f(\psi_{w}x)$, we get $$V(x) - V(\psi_{w}x) = V(f(\psi_{w}x)) - V(\psi_{w}x) - A(\psi_{w}x) + A(\psi_{w}x) = R(\psi_{w}x) + A(\psi_{w}x).$$

Substituting the above in the previous equation we get
\begin{align*}
(V^* + V - W)(x,w) - (V^* + V - W)(\psi_{w}x , \sigma(w)) + R^* (w) & = \\
[V^*(w) - V^*(\sigma (w) ) + R^* (w)] + [V(x) - V(\psi_{w}x)]  - W(x,w) + W (\psi_{w}x , \sigma(w))  & =\\
- A^*(w) + R(\psi_{w}x) + A(\psi_{w}x) + W (\psi_{w}x , \sigma(w)) - W(x,w) & = \\
R(\psi_{w}x),&  \\
\end{align*}
because $A^*(w)= A(\psi_{w}x) + W (\psi_{w}x , \sigma(w)) - W(x,w)$. So the claim follows.

\end{proof}
\bigskip

Note that $R\geq 0$, because $V$ is a calibrated subaction.

Note also that given $w=(w_0,w_1,..),$ then, $\psi_{w}(x)$ depends only of $w_0$. We can use either notation $\psi_{w}(x)$, or $\psi_{w_0}(x)$.

\bigskip

We know that the calibrated subaction satisfies
$$V(x)= \max_{w \in \Sigma} (-V^* -I^* +  W)(w,x).$$
Then, we define
$$b(w,x)= (V^* + V  +I^* - W)(w,x) \geq 0,$$ and,
$$\Gamma_{V}=\{(w,x) \in \Sigma \times [0,1] \,|\, V(x)=  (-V^* -I^* +  W)(w,x)\},$$
which can be written in an equivalent form
$$\Gamma_{V}=\{(w,x) \in \Sigma \times [0,1] \,| \, b(w,x)=0\}.$$

\begin{rem}
Note, that $b(w,x)=0$, if and only if, $(w,x)$ is an
optimal pair.
\end{rem}

Using $ R^* (w)= I^*(w) - I^*(\sigma (w))$  (it follows from (\ref{est})), the FR becomes
$$R(\psi_{w}x) = (V^* + V - W)(w,x) - (V^* + V - W)( \sigma(w), \psi_{w}(x)) + I^*(w) - I^*(\sigma (w)),$$
or
$$R(\psi_{w}x) = b(w,x) - b( \sigma(w), \psi_{w}(x))\,\,\,\,\,(\text{FR1}).$$

From this main equation we get:\\

\begin{lemma}
If $\,\T^{-1}(w,x) = ( \sigma(w), \psi_{w}(x))$, then

a) $b - b \circ \T^{-1} (w,x) = R(\psi_{w}x)$;

b) The function $b$ it is  non-decreasing in the trajectories of
$\T$;

c) $\Gamma_{V}$ is backward invariant;

d) when $(w,x)$ is optimal then $R(\psi_w(x))=0$.
\end{lemma}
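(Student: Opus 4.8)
The plan is to derive everything directly from the Fundamental Relation in the form (FR1), namely
$$R(\psi_{w}x) = b(w,x) - b(\sigma(w),\psi_{w}(x)),$$
together with the two structural facts already established earlier: $R\geq 0$ because $V$ is a calibrated subaction, and $b\geq 0$ because $V(x)=\max_{w}(-V^*-I^*+W)(w,x)$. So first I would restate (FR1) and observe that, with the abbreviation $\T^{-1}(w,x)=(\sigma(w),\psi_{w}(x))$, it reads exactly $b(w,x) - b(\T^{-1}(w,x)) = R(\psi_{w}x)$; this is item (a) verbatim, so (a) requires essentially no work beyond citing (FR1).

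For (b), I would argue that along a backward orbit $b$ is non-decreasing: applying (a) gives $b(w,x) = b(\T^{-1}(w,x)) + R(\psi_{w}x) \geq b(\T^{-1}(w,x))$ since $R\geq 0$; iterating, $b(w,x)\geq b(\T^{-n}(w,x))$ for all $n\geq 0$, i.e.\ $b$ increases as we move forward under $\T$ (equivalently is non-decreasing in the $\T$-trajectories). I should be careful about the phrasing "non-decreasing in the trajectories of $\T$": the point is that if $(w',x')=\T^{-1}(w,x)$ then $b(w',x')\le b(w,x)$, so going in the $\T$ direction (undoing $\T^{-1}$) the value does not decrease. For (c), if $(w,x)\in\Gamma_V$, i.e.\ $b(w,x)=0$, then since $b\geq 0$ and $R(\psi_{w}x)\geq 0$ and $b(w,x) = b(\T^{-1}(w,x)) + R(\psi_{w}x)$, both non-negative summands must vanish: $b(\T^{-1}(w,x))=0$ and $R(\psi_{w}x)=0$. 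The first says $\T^{-1}(w,x)\in\Gamma_V$, giving backward invariance of $\Gamma_V$; the second is precisely item (d), that $R(\psi_w(x))=0$ whenever $(w,x)$ is optimal (recalling the Remark that $b(w,x)=0$ iff $(w,x)$ is an optimal pair).

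The argument is genuinely short and the only "obstacle" is bookkeeping: making sure the direction conventions (which way is "non-decreasing", what "backward invariant" means for $\Gamma_V$ under $\T^{-1}$) are stated consistently with the rest of the paper, and making explicit the two positivity inputs $R\ge 0$ and $b\ge 0$ at the exact spots where a sum of two non-negative terms is forced to be zero. I would write it as: prove (a) from (FR1); deduce (b) by induction using $R\ge0$; deduce (c) and (d) simultaneously from the $n=1$ case of (a) by splitting $0=b(w,x)=b(\T^{-1}(w,x))+R(\psi_w x)$ into its two non-negative pieces. No compactness, no analyticity, and no twist hypothesis is needed here — this lemma is purely formal consequence of the cohomological identity (FR1) and the sign of $R$ and $b$.
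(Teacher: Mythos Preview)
Your proof is correct and is exactly the natural argument: (a) is a restatement of (FR1), and (b)--(d) follow immediately from (a) together with $R\ge 0$ and $b\ge 0$ by splitting $0=b(\T^{-1}(w,x))+R(\psi_w x)$ into its two non-negative summands. The paper itself does not give a proof here but simply refers the reader to \cite{CLO}, so you have supplied precisely the short formal verification one would expect.
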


\textbf{Proof:} see \cite{CLO}.

\bigskip
In this way $\T^{-n}$ spread optimal pairs.
\bigskip

As $R\geq 0$, then the function $b$ is a kind of Lyapunov function  for the iteration
of $\T^{-1}$.

\bigskip
From now on we assume $d=2$.

\bigskip

It is known that if $A$ is twist, then  $x \to w_x$ (can be multi-valuated)   is
monotonous non-increasing (see \cite{Ba} \cite{LMST} \cite{CLO}).  We recall the proof:

\vskip 0.5cm
\begin{pro} If $A$ is twist, then
$x \to w_x$ is
monotonous non-increasing.

\end{pro}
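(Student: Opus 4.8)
The plan is to prove the statement by contradiction, using the twist inequality together with the optimality characterization of the pairs $(w_x, x)$ encoded in the set $\Gamma_V$. First I would set up notation: suppose for contradiction that monotonicity fails, i.e. there are $x < x'$ in $[0,1]$ together with optimal points $w = w_x$ and $w' = w_{x'}$ (so $(w,x) \in \Gamma_V$ and $(w',x') \in \Gamma_V$) such that $w < w'$ in the lexicographic order on $\Sigma$. The goal is to derive a contradiction with optimality by showing that the ``crossed'' pairs $(w', x)$ and $(w, x')$ would give a strictly better value somewhere.

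\medskip

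The key computation uses that $(w,x)$ and $(w',x')$ are optimal pairs, so
$$ V(x) = W(w,x) - V^*(w) - I^*(w), \qquad V(x') = W(w',x') - V^*(w') - I^*(w'),$$
while for the crossed pairs we only have the subaction inequalities
$$ V(x) \geq W(w',x) - V^*(w') - I^*(w'), \qquad V(x') \geq W(w,x') - V^*(w) - I^*(w).$$
Adding the two inequalities and subtracting the two equalities, all the terms $V^*(\cdot) + I^*(\cdot)$ cancel, and one is left with
$$ W(w,x) + W(w',x') \geq W(w',x) + W(w,x').$$
But the twist condition for the involution kernel $W$ (Definition \ref{rom}, applied with the roles of the two coordinates; recall the twist condition is symmetric and holds for any involution kernel once it holds for one), applied to the pair $w < w'$, $x < x'$, gives the strict reverse inequality
$$ W(w,x) + W(w',x') < W(w',x) + W(w,x'),$$
a contradiction. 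Hence no such crossing can occur, and $x \mapsto w_x$ is non-increasing in the appropriate multivalued sense (any choice of optimal $w_x$ and $w_{x'}$ for $x < x'$ satisfies $w_x \geq w_{x'}$).

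\medskip

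I expect the main obstacle to be the careful bookkeeping around the multivaluedness and the boundary/degenerate cases: when $x = x'$ or when $w = w'$, the twist inequality is not strict (or does not apply), so one must phrase the conclusion as ``non-increasing'' rather than ``strictly decreasing,'' and be precise that the statement means every branch of the multivalued map is non-increasing. A secondary technical point is matching the orientation conventions: the twist inequality as stated in Definition \ref{rom} is $G(a,b) + G(a',b') < G(a,b') + G(a',b)$ for $a' > a$, $b' > b$, and one must check that plugging in $a = w$, $a' = w'$, $b = x$, $b' = x'$ indeed produces the inequality that contradicts the one obtained from optimality; this is a direct substitution but deserves an explicit line. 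Everything else (existence of optimal $w_x$ by compactness and lower semicontinuity of $I^*$, the fact that $W$ being an involution kernel for $A$ means the twist property is independent of the choice of kernel) is already available from earlier in the paper.
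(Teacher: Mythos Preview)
Your proof is correct and follows essentially the same argument as the paper: both use the optimality of $(w_x,x)$ and $(w_{x'},x')$ together with the subaction inequalities for the crossed pairs $(w_{x'},x)$ and $(w_x,x')$, add them so that the $V^*+I^*$ terms cancel, and then invoke the twist inequality to rule out $w_x < w_{x'}$. The only cosmetic difference is that the paper phrases it directly via the quantity $\Delta(x,x',w)=W(w,x)-W(w,x')$, obtaining $\Delta(x,x',w_{x'})\le\Delta(x,x',w_x)$ and reading off $w_{x'}\le w_x$ from twist, whereas you set it up as a contradiction; the underlying computation is identical. (One small remark: your parenthetical about the twist condition being ``symmetric'' is unnecessary and slightly misleading---you apply Definition~\ref{rom} exactly as stated with $a=w$, $a'=w'$, $b=x$, $b'=x'$, so no symmetry consideration is needed.)
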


\begin{proof}
 Suppose $x<x'$, and, that $(w_x,x), ( w_{x'},x')$ are two optimal pairs. We will show that $w_x\geq w_{x'}.$

Indeed, as

$$V(x) =
\sup_{ w \in \Sigma}\, (W(w ,x) - V^* (w )- I^*(w))= W(w_{x},x) -
V^* (w_{x})- I^*(w_{x}),
$$
then
$$W(w ,x) - V^* (w )- I^*(w) \leq W(w_{x},x) - V^* (w_{x})- I^*(w_{x}),\,\,\,(*)$$
for any $w$, and we also have that

$$V(x') =
\sup_{ w \in \Sigma}\, (W(w ,x') - V^* (w )- I^*(w))= W(w_{x'},x') - V^* (w_{x'})- I^*(w_{x'}).
$$

Therefore,
$$W(w ,x') - V^* (w )- I^*(w) \leq W(w_{x'},x') - V^* (w_{x'})- I^*(w_{x'}),\,\,\,(**)$$
for any $w$.

Suppose, $x < x'$. Substituting $w_{x'}$ in the first expression (*), and $w_{x}$ in the second one (**) we get
$$\Delta(x,x',w_{x'}) \leq  \Delta(x,x',w_{x}),$$
where $W(x,w)-W(x',w)=\Delta(x,x',w)$. So the twist property implies that $w_{x'} \leq w_{x}$.

\end{proof}

We showed before that  the twist property implies that for $x<x'$,
if $b(w,x)=0$ and $b(w',x')=0$, then $w' < w$, which means that the
optimal sequences are monotonous  non-increasing. Remember,  that we define
the ``turning point $c$"  as being the maximum of the point
$x$ that has his optimal sequence starting in 1:
$$c=\sup\{x\, |\, b(w,x)=0 \,\Rightarrow \, w=(1\, w_1\, w_2 ...)\}.$$


The main criteria is the following:\\

\emph{``If $x \in [0,1]$ has the optimal sequence  $w=(w_0 \, w_1 \,
w_2\, ...)$ then
\[
w_0= \left \{
  \begin{array}{ll}
    1, \,\, if &  x \in [0,c] \\
    0, \,\, if &  x \in (c,1]
  \end{array}
\right.
\]}
Starting from $(x^0,w^0)$ we can iterate FR1 by $\T^{-n} (w,x)
=(w^n,x^n)$ in order to obtain new points $w^1 , \, w^2\, ...\in
\Sigma$. Unless the only possible optimal point $w(x)$, for all $x$,
is a fixed point for $\sigma$, then,  $0<c<1$.

Note that for $c$ there are two optimal pairs $(w,c)$ and $(w',c)$,
where the first symbol of $w$ is zero, and, the first symbol of $w'$
is one.

 The next lemma shows an interesting property of optimal pairs. If the maximizing measure for $A$ is supported in a periodic orbit, then the optimal pair $(w_p,p)$, for such points $p$ in the periodic orbit, could not be unique (that is, there exists more the one $w_p$ for a fixed $p$). This can happen (and there examples) in the case the turning point $c$ belongs to the pre-image of the maximizing periodic orbit.

\begin{lemma}
If $A$ satisfies the twist property, then $c$ is solution of
$$V(\psi_{1}x) + A(\psi_{1}x) = V(\psi_{0}x) + A(\psi_{0}x).$$
\end{lemma}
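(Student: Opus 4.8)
The plan is to use the characterization of the turning point $c$ together with the calibrated subaction equation. Recall that $V$ is calibrated, so for every $x$,
\[
V(x) = \max\{\, V(\psi_0 x) + A(\psi_0 x),\ V(\psi_1 x) + A(\psi_1 x)\,\},
\]
after the normalization $m(A)=0$. The two inverse branches $\psi_0,\psi_1$ are the ones whose indices are the possible first symbols $w_0$ of an optimal sequence $w(x)$. The key point is that, by the main criteria stated just above the lemma, for $x<c$ every optimal sequence starts with $w_0=1$, while for $x>c$ every optimal sequence starts with $w_0=0$; and at $x=c$ itself there are two optimal pairs, one starting with $0$ and one starting with $1$.

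First I would make precise the link between "optimal $w(x)$ starts with $w_0=i$" and "the maximum in the calibrated equation is attained at the branch $\psi_i$". Using item a) and d) of the previous Lemma (the Fundamental Relation), if $(w,x)$ is an optimal pair then $R(\psi_w x)=0$, i.e. $V(\psi_w x)+A(\psi_w x)=V(f(\psi_w x))=V(x)$; conversely the branch realizing the max in the calibrated equation produces an optimal sequence with that first symbol (this is essentially the iteration of FR1 described in the text). Hence: for $x<c$ the maximum is realized (only) by $\psi_1$, giving
\[
V(x) = V(\psi_1 x) + A(\psi_1 x) > V(\psi_0 x) + A(\psi_0 x);
\]
for $x>c$ the maximum is realized (only) by $\psi_0$, giving the reverse strict inequality
\[
V(x) = V(\psi_0 x) + A(\psi_0 x) > V(\psi_1 x) + A(\psi_1 x).
\]
So the continuous function
\[
\Phi(x) := \big(V(\psi_1 x) + A(\psi_1 x)\big) - \big(V(\psi_0 x) + A(\psi_0 x)\big)
\]
is strictly positive on $[0,c)$ and strictly negative on $(c,1]$. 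Since $V$ is continuous (it is a uniform limit of the equicontinuous family $\frac1{\beta_n}\log\phi_{\beta_n}$) and $A$, $\psi_0$, $\psi_1$ are continuous, $\Phi$ is continuous, and therefore $\Phi(c)=0$, which is exactly the claimed equation $V(\psi_1 c)+A(\psi_1 c)=V(\psi_0 c)+A(\psi_0 c)$.

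I would also note the consistency check afforded by the observation recorded in the text that at $c$ there are genuinely two optimal pairs $(w,c)$ and $(w',c)$ with first symbols $1$ and $0$ respectively; this directly says both branches realize the maximum at $x=c$, which is precisely $\Phi(c)=0$, so one may even bypass the limiting/continuity argument. The main obstacle — really the only subtle point — is justifying rigorously the two-way correspondence between "first symbol of an optimal $w(x)$" and "branch attaining the max in the calibrated equation", i.e. that an optimal sequence projects, under one application of $\T^{-1}$, onto an optimal sequence for $\psi_{w_0}x$ whose data feeds the calibrated recursion at $x$ (this uses FR1 and $R,R^*\ge 0$ as in the preceding Lemma), and the fact that for $x\neq c$ the max in the calibrated equation is attained at a unique branch — which follows because a tie at some $x\ne c$ would produce optimal sequences with both possible first symbols, contradicting the definition of $c$ (for $x<c$) or its role as a supremum (for $x>c$). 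Once this dictionary is in place the proof is just the intermediate-value / continuity argument above.
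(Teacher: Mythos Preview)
Your proof is correct and follows essentially the same approach as the paper. The paper argues a bit more economically: for $y<c$ the optimal $w$ starts with $1$, so by item d) of the previous lemma $R(\psi_1 y)=0$, hence $V(y)=V(\psi_1 y)+A(\psi_1 y)$; letting $y\to c^-$ gives $V(c)=V(\psi_1 c)+A(\psi_1 c)$, and the symmetric argument from the right gives $V(c)=V(\psi_0 c)+A(\psi_0 c)$. Your detour through the strict sign of $\Phi$ on each side is unnecessary (and its justification via the converse direction of FR1 is correct but could be stated more explicitly), though your final remark---that the existence of two optimal pairs at $c$ with distinct first symbols already gives both equalities directly---is exactly the heart of the matter.
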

\begin{proof} As  for $y<c$, we have $b(w=(1 \, ...),y)=0$,
 taking limit of $y$ on the left side of $c$, then, we have from FR1, that $R(\psi_{1}y)=0$. From this follows  $R(\psi_{1}c)=0$, which means $V(c)=V(\psi_{1}c) + A(\psi_{1}c)$.  Analogously, taking limit of $y$ on the right of $c$, we get $V(c)=V(\psi_{0}c) + A(\psi_{0}c)$.  Thus, $V(\psi_{1}c) + A(\psi_{1}c)= V(\psi_{1}c) + A(\psi_{1}c)$.
\end{proof}

A point $x$ is called eventually periodic (or, pre-periodic), if
there is $n\neq m $, such that, $f^n(x)=f^m(x).$

\begin{lemma} \label{ufa} (Characterization of optimal change)
Let $c \in (0,1)$ be the turning point then, for any $x < x'$, such
that, $b(w,x)=0$ and $b(w',x')=0$, we have $w \neq w'$,  if, and
only if, there exists $n \geq 0$ such that $f^n (c) \in [x,x']$.
Moreover, if $x,x'$ are such that $w(x)$ and $w(x')$ are identical
until the $n$ coordinate, then, $f^n (c)\in (x,x').$
\end{lemma}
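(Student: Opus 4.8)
The plan is to exploit the monotonicity of the multivalued optimal map $x\mapsto w_x$ together with the backward invariance of $\Gamma_V$ (part c of the earlier Lemma) and the fact, proved in the preceding Lemma, that the turning point $c$ satisfies $V(\psi_1 c)+A(\psi_1 c)=V(\psi_0 c)+A(\psi_0 c)$, i.e. $c$ has two optimal sequences, one beginning with $0$ and one beginning with $1$. First I would prove the ``only if'' direction together with the last sentence. Suppose $x<x'$ and the optimal sequences $w=w(x)$, $w'=w(x')$ first disagree at coordinate $n$ (so $w_0\dots w_{n-1}=w'_0\dots w'_{n-1}$ but $w_n\neq w'_n$; by monotonicity $w_n=1$, $w'_n=0$). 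Apply $\T^{-n}$ to the optimal pairs $(w,x)$ and $(w',x')$: by backward invariance (iterated $n$ times) the pairs $\T^{-n}(w,x)=(\sigma^n w,\psi_{w_{n-1}}\circ\cdots\circ\psi_{w_0}(x))$ and $\T^{-n}(w',x')=(\sigma^n w',\psi_{w'_{n-1}}\circ\cdots\circ\psi_{w'_0}(x'))$ are again optimal; since the first $n$ symbols of $w$ and $w'$ agree, the two base points are images of $x$ and $x'$ under the \emph{same} composition of inverse branches $\Psi:=\psi_{w_{n-1}}\circ\cdots\circ\psi_{w_0}$, which is monotone increasing, so $\Psi(x)<\Psi(x')$ (or they coincide). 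At these new base points $y:=\Psi(x)$ and $y':=\Psi(x')$ the optimal sequences are $\sigma^n w$ (starting with $1$) and $\sigma^n w'$ (starting with $0$). By definition of $c$ as the supremum of points whose optimal sequence can start with $1$, together with the criterion ``$w_0=1$ iff $x\in[0,c]$, $w_0=0$ iff $x\in(c,1]$'' stated just before the lemma, this forces $y\le c\le y'$, i.e. $c\in[\Psi(x),\Psi(x')]$. Since $\Psi^{-1}=f^n$ on the relevant interval and $f^n$ is monotone increasing, applying $f^n$ gives $f^n(c)\in[x,x']$; and if $x,x'$ are chosen so that strict inequalities hold one gets $f^n(c)\in(x,x')$, which is the ``moreover'' statement.

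For the ``if'' direction, suppose there is $n\ge0$ with $f^n(c)\in[x,x']$, and assume for contradiction that $w(x)=w'(x')=:w$ (a single common optimal sequence for both). Then by monotonicity every point of $[x,x']$ has $w$ as an optimal sequence; in particular $z:=f^n(c)$ has $w$ as an optimal sequence. Pulling back along the first $n$ inverse branches of $w$ and using backward invariance again, the point $\psi_{w_{n-1}}\circ\cdots\circ\psi_{w_0}(z)$ is optimal for $\sigma^n w$; but this point is a preimage of $z=f^n(c)$ under $n$ inverse branches, and one legitimate such preimage is $c$ itself — here I need to check that the branch choices $w_0,\dots,w_{n-1}$ are exactly the ones that carry $z$ back to $c$, which follows because on $[x,x']$ the optimal sequence is constant equal to $w$ and at $c$ one of the optimal sequences must, by the previous Lemma, be compatible with the turning-point prescription. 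This yields that $\sigma^n w$ is optimal for $c$ with a prescribed first symbol, contradicting the fact that $c$ has \emph{two} optimal continuations (one starting with $0$, one starting with $1$) while $w$ forces only one; hence $w(x)\neq w(x')$ as claimed.

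The main obstacle I anticipate is the careful bookkeeping in the backward-invariance step: making sure that when one pulls the pairs $(w,x)$ and $(w',x')$ back by $\T^{-n}$ the base points really are obtained from $x$ and $x'$ by the \emph{same} monotone composition of inverse branches (which requires that the first $n$ symbols genuinely agree and that $\Gamma_V$ is backward invariant coordinatewise), and dually, in the ``if'' direction, that the preimage of $f^n(c)$ selected by the branches $w_0,\dots,w_{n-1}$ is precisely $c$ and not some other preimage — this uses both the monotonicity of $x\mapsto w_x$ on the interval and the two-optimal-sequences property of $c$ established in the preceding lemma. A secondary technical point is handling the degenerate possibility $\Psi(x)=\Psi(x')$ or endpoints coinciding, which is why the statement uses the closed interval $[x,x']$ in the equivalence and the open interval $(x,x')$ only in the sharper ``moreover'' clause; I would dispatch these boundary cases by a direct appeal to continuity of $V$, $W$ and $I^*$ on the relevant sets.
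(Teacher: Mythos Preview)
Your treatment of the ``only if'' direction and the ``moreover'' clause is correct and coincides with the paper's argument: the paper phrases it as an iterative algorithm (at step $k$ check whether the pulled-back points $\psi_{w_{k-1}}\cdots\psi_{w_0}(x)$ and $\psi_{w_{k-1}}\cdots\psi_{w_0}(x')$ straddle $c$), while you pull back $n$ times in one shot, but the content is identical.

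The ``if'' direction, however, has a genuine gap, and in two places. First, the crucial claim that $\Psi(z)=c$, where $\Psi=\psi_{w_{n-1}}\circ\cdots\circ\psi_{w_0}$ and $z=f^n(c)$, is not established. The point $\Psi(z)$ is the unique $f^n$-preimage of $z$ lying in the cylinder $I_{(w_0,\dots,w_{n-1})}$; for it to equal $c$ you need $c$ itself to lie in that cylinder, i.e.\ the first $n$ symbols of the \emph{$f$-itinerary} of $c$ (with respect to the partition $\{I_0,I_1\}$) must be $(w_{n-1},\dots,w_0)$. Nothing in your argument forces this, and your attempted justification (``on $[x,x']$ the optimal sequence is constant equal to $w$ and at $c$ one of the optimal sequences must be compatible with the turning-point prescription'') does not bridge the gap: the optimal-sequence symbols are governed by the position relative to $c$, not by the partition into $I_0,I_1$, so there is no a~priori match. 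Second, even granting $\Psi(z)=c$, your contradiction does not follow: knowing that $\sigma^n w$ is optimal for $c$ is perfectly compatible with $c$ having a \emph{second} optimal sequence starting with the other symbol; the existence of two optimal sequences at $c$ does not preclude $\sigma^n w$ from being one of them.

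The paper's own proof is terse on this direction as well: the asserted step-$1$ equivalence ``$w_1\neq w'_1 \Leftrightarrow x<f(c)<x'$'' is really only argued left-to-right (from $c\in(\psi_i x,\psi_i x')\subset I_i$ one gets $f(c)\in(x,x')$ by monotonicity of $f|_{I_i}$; the converse would require knowing $c\in I_i$). What the iterative algorithm genuinely establishes at each step is the equivalence ``$w_k\neq w'_k$ iff $c$ lies between the $k$-th pullbacks'', and it is this internal equivalence, together with the exhaustive trichotomy on the position of the pullbacks relative to $c$, that one should use to get both directions. A clean way to organize the ``if'' direction is to take the \emph{least} $n$ with $f^n(c)\in[x,x']$ and argue inductively that the first $n$ pullbacks follow the $f$-orbit of $c$ backwards, rather than attempting to pull back $f^n(c)$ directly along the (a~priori unrelated) branches $w_0,\dots,w_{n-1}$.
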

\begin{proof}

$^{ }$\\

Step 0\\
If $x < x' \leq c$, then, $w_{0} = w'_{0}=1$, else if $c< x < x' $,
then $w_{0} = w'_{0}=0$. Suppose $w_{0} = w'_{0}= i\in \{0,1\}$ then
applying FR1 we get  $\psi_{i}x < \psi_{i}x'$ and $b(
(w_{1} \, w_{2} \, ...), \psi_{i}x)=0$ and $b( (w'_{1} \, w'_{2} \,
...),\psi_{i}x' )=0$.

Step 1\\
If $\psi_{i}x < \psi_{i}x' \leq c$, then, $w_{1} = w'_{1}=1$, else,
if $c< \psi_{i}x < \psi_{i}x' $, then $w_{1} = w'_{1}=0$. Otherwise,
if $\psi_{1}x <  c < \psi_{1}x' $ we can use the monotonicity of $f$
in each branch in order to get $ x <  f(c) < x' $. Thus $$w_{1} \neq
w'_{1} \Leftrightarrow  x <  f(c) < x'. $$ The conclusion comes by
iterating this algorithm.

\end{proof}
\bigskip

\begin{lemma}

The set
$$B(w)=\{ x \, | \, b(w,x)=0 \}$$
is closed and connected, that is, an interval (could be a single
point). More specifically, if $B(w)=[a,b]$, then, $a$ and $b$ are
adherence points of the orbit of $c$.

In particular, if $c$ is pre-periodic, then, for any non-empty
$B(w)$, there exists $n,m$ such that $B(w)=[f^n (c), f^m (c)]$
(unless $B(w)$ is of the form $[0,b]$, or $[a,1]$.
\end{lemma}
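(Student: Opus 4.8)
The plan is to exploit the monotonicity of the multivalued map $x \mapsto w_x$ together with the backward-invariance of $\Gamma_V$ (part (c) of the Lemma following (FR1)) and the characterization of optimal change in Lemma \ref{ufa}. First I would fix $w$ with $B(w)$ nonempty and show $B(w)$ is connected: if $x_1 < x_2$ both lie in $B(w)$ and $x_1 < y < x_2$, pick any optimal $w_y$ for $y$; by the twist/monotonicity proposition we have $w = w_{x_2} \leq w_y \leq w_{x_1} = w$ in the lexicographic order, hence $w_y = w$ and $y \in B(w)$. Thus $B(w)$ is an interval. Closedness of $B(w)$ follows because $b(w,\cdot)$ is continuous in $x$ (it is built from $W(w,\cdot)$, which is analytic in $x$ for fixed $w$ by Corollary \ref{ana}, and from the constants $V^*(w)$, $I^*(w)$), so $B(w) = \{x : b(w,x) = 0\}$ is the preimage of a closed set; combined with connectedness, $B(w) = [a,b]$ for some $a \le b$ (possibly $a=b$).

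Next I would identify the endpoints $a$ and $b$ with adherence points of the orbit of $c$. Suppose $a > 0$ and $a$ is not an adherence point of $\{f^n(c)\}_{n \ge 0}$. Then there is $\e > 0$ such that $(a-\e, a+\e)$ contains no point $f^n(c)$. Take $x < x'$ in $(a-\e,a)$ and in $[a, a+\e)$ respectively with $x' \in B(w)$; by Lemma \ref{ufa}, since no $f^n(c)$ lies in $[x,x']$, every optimal sequence for $x$ equals every optimal sequence for $x'$, so $x \in B(w)$ too, contradicting $x < a = \inf B(w)$. Hence $a$ must be an adherence point of the orbit of $c$ (or $a = 0$). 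The same argument applied on the right shows $b$ is an adherence point of the orbit of $c$ (or $b = 1$).

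Finally, under the hypothesis that $c$ is pre-periodic, the orbit $\{f^n(c) : n \ge 0\}$ is a finite set, hence closed, so it coincides with its set of adherence points. Therefore, for a nonempty $B(w) = [a,b]$ not of the form $[0,b]$ or $[a,1]$, both $a$ and $b$ lie in $\{f^n(c) : n \ge 0\}$, i.e.\ $a = f^n(c)$ and $b = f^m(c)$ for some $n, m \ge 0$, which is the claim. The main obstacle I anticipate is the careful handling of the multivalued nature of $x \mapsto w_x$ at the endpoints and at preimages of $c$: one must check that ``$b(w,x) = 0$ for some optimal $w$'' behaves well under the monotonicity argument even when several optimal $w$'s coexist at a single $x$ (as happens precisely at $c$ and its preimages, per the Lemma preceding Lemma \ref{ufa}), and that the squeezing argument via Lemma \ref{ufa} correctly accounts for the branch of $f$ used at each step of the backward iteration.
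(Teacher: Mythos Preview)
Your proof is correct and follows essentially the same route as the paper. The only notable difference is packaging: for connectedness you invoke the already-proven monotonicity proposition (so $w=w_{x_2}\le w_y\le w_{x_1}=w$ forces $w_y=w$), whereas the paper reproves this step directly via the iterated inverse branches and the twist inequality; for the endpoints you cite Lemma \ref{ufa} explicitly, while the paper states the same idea in contrapositive form (if the optimal $w$ changes to the right of $b$, then every $[b,b+\delta)$ must contain some $f^j(c)$). Your version is slightly more streamlined, but there is no substantive difference in strategy.
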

\begin{proof}
 Indeed, remember that $\psi_{i}$, for $i=0,1$, are
order preserving. If, $x<y$, and, $x,y \in B(w)$, then, we claim
that each $z \in (x,y)$ satisfies $z \in B(w)$. Indeed, otherwise if
$\tilde{w} \neq w$ is the optimal sequence for $z$, we know that
there is $K>0$ such that $\tilde{w}_{j} = w_{j} $ for $j=0,..,k-1$
and $\tilde{w}_{k} \neq w_{k}$. On the other hand
$$\psi_{k, w} x < \psi_{k, \tilde{w}} z <\psi_{k, w} y.$$
Without lost of generality suppose $w_{k}=1$ then $\tilde{w}_{k}=0$
a contradiction by twist property, analogously if $w_{k}=0$ then
$\tilde{w}_{k}=1$ a contradiction again.

The closeness follows from the continuity on $x$ of the function
$b$: if, $x_n \subset B(w)$, and $x_n \to \bar{x}$, we observe that
$$b(w,x_n)=0  \Leftrightarrow  V(x_n)+ V^*(w) +I^*(w) - W(w,x_n)=0,$$
and this implies $b(w,\bar{x})=0$, that is, $\bar{x} \in B(w)$.

For the second part it is enough to see that, for each extreme of
the interval, for example $b$, if the optimal $w$ is not constant in
the right side, for any $\delta >0$ there is a image of $c$, namely
$f^{j}(c)\in [b, b+\delta)$ taking $\delta \to 0$ we get $f^{j}(c)
\to b^{+}$.

\end{proof}

\begin{rem}
Each set $B(w)=[a,b]$ is such that $a=f^n(c)$, or,
$a$ it is accumulated by a subsequence of $f^j(c)$ from the left
side. Similar property is true for $b$ (accumulated by the right
side).

\end{rem}

\bigskip

\begin{center}
\includegraphics[scale=0.25,angle=0]{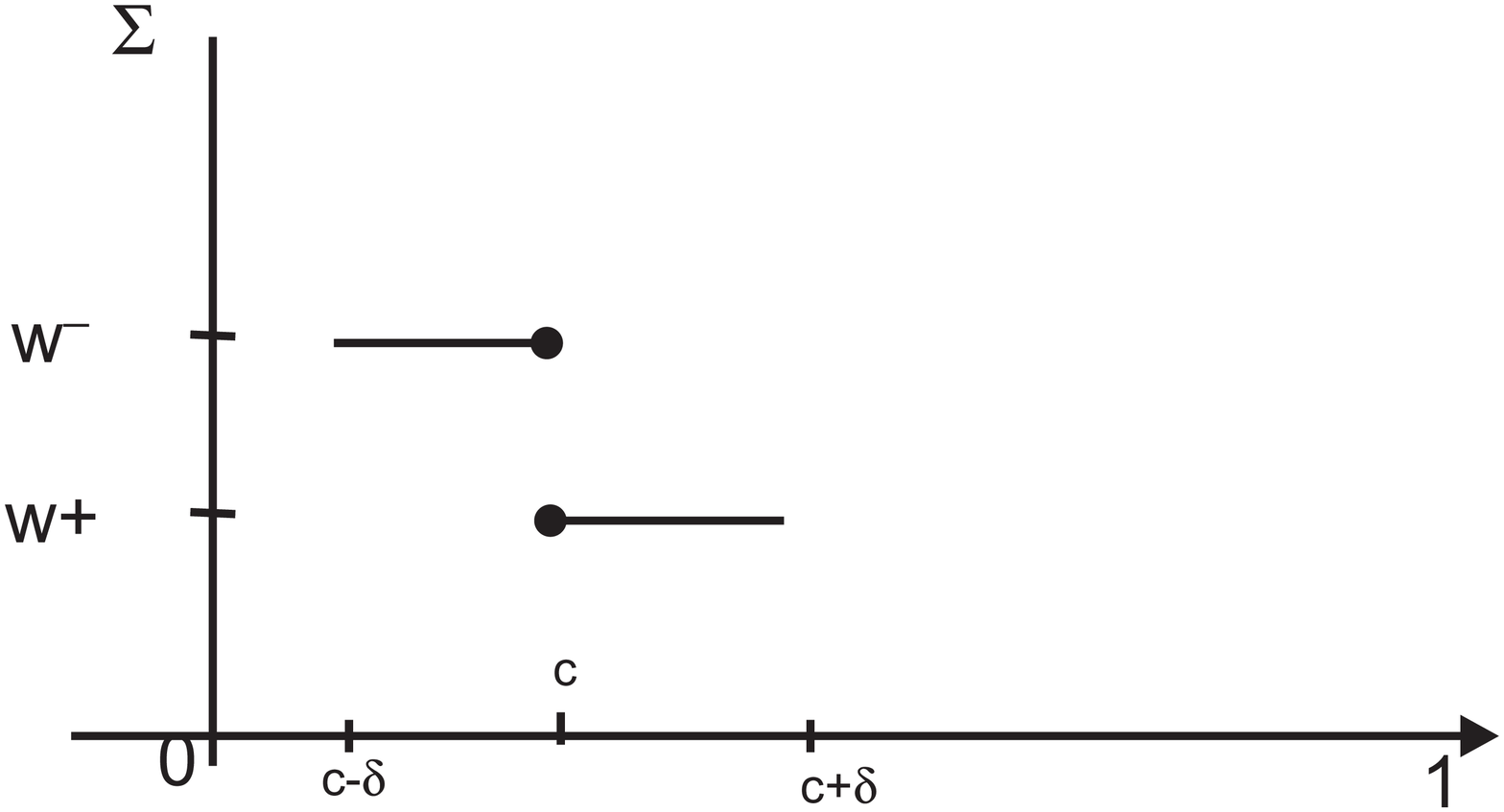}\\
\small{Fig. 3}  \\
\end{center}

\bigskip

\begin{lemma}\label{iso}
Let $c \in (0,1)$ be the turning point.  Let us suppose the $c$ is
isolated from his orbit, which means that, there is $\delta>0$, $w^{-}$, $w^{+}$, such
that, $b(w^{-},x)=0$, for any $x \in (c-\delta, c]$, and,
$b(w^{+},x)=0$, for any $x \in [ c, c+\delta)$,  then, there is no
accumulation points of the orbit of $c$. In this case $c$ is
pre-periodic.
\end{lemma}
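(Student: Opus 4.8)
The plan is to reduce the statement to the assertion that the orbit $\mathcal{O}(c)=\{f^{n}(c):n\ge 0\}$ is \emph{finite}. Since $[0,1]$ is compact, every infinite subset has an accumulation point, so ``$\mathcal{O}(c)$ has no accumulation point'' is equivalent to ``$\mathcal{O}(c)$ is finite'', which is in turn equivalent to ``$c$ is pre-periodic''; thus it is enough to prove finiteness of $\mathcal{O}(c)$. First I would unpack the hypothesis: since $b(w^{-},\cdot)\equiv 0$ on $(c-\delta,c]$, any two points of $(c-\delta,c)$ share the optimal word $w^{-}$, so Lemma~\ref{ufa} forbids $f^{n}(c)\in[x,x']$ for every $[x,x']\subset(c-\delta,c)$; hence $\mathcal{O}(c)\cap(c-\delta,c)=\emptyset$, and symmetrically $\mathcal{O}(c)\cap(c,c+\delta)=\emptyset$. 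An open set disjoint from $\mathcal{O}(c)$ is disjoint from $\overline{\mathcal{O}(c)}$, so $(c-\delta,c)\cup(c,c+\delta)$ contains no point of $\overline{\mathcal{O}(c)}$; in particular $c$ is isolated in $\overline{\mathcal{O}(c)}$ and $c$ is not an accumulation point of its own orbit. Moreover, if $f^{n}(c)\in(c-\delta,c+\delta)$ for some $n\ge 1$, then $f^{n}(c)=c$ and we are already done (with $c$ periodic), so from now on assume $f^{n}(c)\notin(c-\delta,c+\delta)$ for all $n\ge 1$. Finally, by the turning-point criterion, $B(w^{-})=[a^{-},c]$ with $a^{-}\le c-\delta$ and $w^{-}_{0}=1$, and $B(w^{+})=[c,b^{+}]$ with $b^{+}\ge c+\delta$ and $w^{+}_{0}=0$; both pieces are non-degenerate.

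Now suppose $\mathcal{O}(c)$ is infinite; I want a contradiction. It then has an accumulation point, and after passing to a monotone subsequence there are distinct points $f^{m_{k}}(c)\to p$ with $m_{k}\uparrow\infty$, say from the left. Pull back one step: $f^{m_{k}-1}(c)=\psi_{j_{k}}(f^{m_{k}}(c))$, where $j_{k}$ is the branch index of $f^{m_{k}-1}(c)$; by the pigeonhole principle $j_{k}=j$ for infinitely many $k$, and along that subsequence $f^{m_{k}-1}(c)=\psi_{j}(f^{m_{k}}(c))\to\psi_{j}(p)$, still distinct (injectivity of $\psi_{j}$) and $\ne c$ after discarding at most one index (if infinitely many $f^{m_{k}-1}(c)$ equalled $c$ then $c$ would be periodic, done). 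If $\psi_{j}(p)=c$ we would have distinct orbit points $\ne c$ accumulating at $c$, contradicting that $c$ is isolated in $\overline{\mathcal{O}(c)}$; so set $p_{1}:=\psi_{j}(p)\ne c$ and iterate. A diagonal argument then produces a single subsequence along which, for every $i\ge 1$, $f^{m_{k}-i}(c)\to p_{i}$ with the $f^{m_{k}-i}(c)$ distinct in $k$ and $\ne c$, and $f(p_{i+1})=p_{i}$. Hence either the chain $(p_{i})_{i\ge 0}$ hits $c$ at a finite stage --- a contradiction as above --- or $p_{i}\ne c$ for all $i$, i.e. $p_{0}$ carries an entire backward orbit $p_{1},p_{2},\dots$ lying in the compact forward-invariant set $\overline{\mathcal{O}(c)}$ and avoiding $c$.

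Excluding this last possibility is the step I expect to be the main obstacle. My approach would be to bring in the turning point itself. Combining the turning-point lemma ($V(\psi_{1}c)+A(\psi_{1}c)=V(\psi_{0}c)+A(\psi_{0}c)$) with the Fundamental Relation (Proposition~\ref{fun}, equation FR1) and the ``moreover'' clause of Lemma~\ref{ufa} --- pull the optimal-word switch at $f^{n}(c)$ back along the true backward orbit to the 0-th-coordinate switch, which can only sit at $c$ --- one obtains $R(f^{n}(c))=0$ for all $n\ge 0$; since $R$ is continuous off the partition point, $R\equiv 0$ on $\overline{\mathcal{O}(c)}$. Consequently every $f$-invariant probability supported on $\overline{\mathcal{O}(c)}$ has $\int A=m(A)$, i.e. equals the unique maximizing measure; in particular the ``core'' $\bigcap_{n\ge 0}f^{n}(\overline{\mathcal{O}(c)})$, which contains the whole backward chain $(p_{i})$, is uniquely ergodic. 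Using the twist condition --- which, by the proposition just before Lemma~\ref{ufa}, makes $x\mapsto w(x)$ monotone --- the calibrated backward chain $(p_{i})$ must then be eventually periodic (otherwise it would force an infinite monotone family of distinct optimal words, hence infinitely many pieces $B(w)$, accumulating on both sides of a point of the core, incompatibly with the pieces being non-degenerate intervals with disjoint interiors covering $[0,1]$). An eventually periodic backward chain furnishes a periodic point of $f$ in $\omega(c)$; but $f$ is expanding, so that periodic orbit is repelling, and no orbit can accumulate on a repelling orbit without eventually entering it --- forcing $\mathcal{O}(c)$ to be eventually periodic, which contradicts its being infinite.

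This establishes that $\mathcal{O}(c)$ is finite, hence that $c$ is pre-periodic and, equivalently, that the orbit of $c$ has no accumulation points, which is exactly the assertion. I would stress that the genuinely delicate point is the third paragraph: making precise that a calibrated backward chain confined to the (uniquely ergodic) core is eventually periodic, and then using expansion to upgrade this into eventual periodicity of $c$; the first two paragraphs (the reduction, the local reading of the hypothesis via Lemma~\ref{ufa}, and the pigeonhole pull-back producing either an immediate contradiction at $c$ or an escaping backward chain) are routine by comparison.
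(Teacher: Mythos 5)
Your reduction (finiteness of $\mathcal{O}(c)$ is equivalent to pre-periodicity), your local reading of the hypothesis via Lemma~\ref{ufa}, and the pigeonhole pull-back construction of a backward chain in $\overline{\mathcal{O}(c)}$ are all fine; the trouble is concentrated in the third paragraph, and there it is genuine. First, the inference ``$R(f^{n}(c))=0$ for all $n\ge 0$'' is not justified by the ingredients you cite. The turning-point lemma yields $R(\psi_{0}c)=R(\psi_{1}c)=0$, i.e.\ $R$ vanishes at the two $f$-preimages of $c$; and FR1 together with Lemma~6.1(d) says $R(\psi_{w(x)}x)=0$ at $\psi$-preimages of $x$ obtained by following the \emph{optimal} word, which is not the same as the forward orbit $c,f(c),\dots,f^{n}(c)$. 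Pulling the coordinate-switch at $f^{n}(c)$ back to the coordinate-$0$ switch at $c$ tells you something about $\psi_{i_{n-1}}\cdots\psi_{i_{0}}(f^{n}(c))=c$, not about $R$ evaluated at the forward iterate $f^{n}(c)$ itself; so the conclusion that $R\equiv 0$ on $\overline{\mathcal{O}(c)}$, and therefore that the ``core'' is uniquely ergodic, is unsupported. Second, and more fatally, the closing step ``no orbit can accumulate on a repelling orbit without eventually entering it'' is simply false for an expanding map: almost every point has a dense orbit, which accumulates on every periodic point without ever landing on it. So even if the unique-ergodicity step were repaired, the last sentence does not close the argument. (The parenthetical in between, about infinitely many non-degenerate pieces $B(w)$ being impossible, also tacitly assumes all $B(w)$ have positive length, which the paper explicitly does not --- some $B(w)$ are singletons.)

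For comparison, the paper avoids all of this global machinery and gives a purely finitary contraction argument. Choose $N$ with $\lambda^{N}<\delta$ and partition $[0,1]$ by the first $N$ iterates $\{c,f(c),\dots,f^{N-1}(c)\}$. On each resulting piece $I_{j}$, Lemma~\ref{ufa} forces every optimal word to lie in one fixed length-$N$ cylinder; if $f^{k}(c)$, $k\ge N$, is the \emph{first} later iterate to land in $I_{j}$, then pulling $I_{j}$ back along the $k$ symbols of that cylinder (iterating FR1) produces a small interval $Z_{j}\ni c$ with $Z_{j}\subset(c-\delta,c+\delta)$. On $Z_{j}$ the hypothesis pins the optimal word to $w^{-}$ on the left of $c$ and $w^{+}$ on the right; pushing forward, the optimal word is locked to $(i_{0}\cdots i_{k-1}w^{-})$ on $[a,f^{k}(c)]$ and to $(i_{0}\cdots i_{k-1}w^{+})$ on $[f^{k}(c),b]$, so no further switch --- hence no other $f^{r}(c)$ --- can occur inside $I_{j}$. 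Each of the finitely many $I_{j}$ therefore contains at most one iterate $f^{k}(c)$ with $k\ge N$, so $\mathcal{O}(c)$ is finite. If you want to rescue your own approach, the place to work is precisely the third paragraph: you would need an argument that actually confines $\mathcal{O}(c)$ rather than merely producing a backward chain, and the repelling-orbit claim must be replaced by something specific to the structure of $\overline{\mathcal{O}(c)}$.
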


\begin{proof}

Take $N>0$, such that $\frac{1}{2^{N-1}}< \delta$, and,
consider the sequence
$$\{c, f(c), ..., f^{N-1}(c)\},$$
which gives an
partition, which will be denoted by: $\{I_0, I_1, ..., I_{N-1} \}$.
 Note that the points $f^j(c), j=0,...,N-1$, are not order by $j$. A typical interval would be of the form $I_k=(f^{j_k}, f^{j_{k+1}})$. One of the $I_j$ contains the point $0$ in the boundary, and one contains the point $1$ in the boundary. It may happen that a certain $f^r (c)\in I_j$, but, then $r>N-1$.

Since each interval $I_J$ does not have in its interior points of
the form $f^k(c)$, $k\leq N-1$, we  get from Lemma \ref{ufa} above that:
$$ b(w,x)=0  \to w \in \overline{ i_0 i_1 ... i_{N-1}}, \forall x \in I_{j},$$
where $\overline{ i_0 i_1 ... i_{N-1}}\subset \Sigma$ denotes the cylinder with the corresponding symbols. That is, the discrepancy of the corresponding $w$ have to be at order bigger than $N-1$.

\bigskip

If $c$ is eventually periodic there exist just a finite number
intervals $B(w)$ with positive length. The other $B(w)$ are reduced
to points and they are also finite.
\bigskip

On the other hand, we claim that $I_{j}=[a,b]$ can have in its interior at most one in the forward orbit of $c$.

Indeed, if $I_{j} \cap \{f^{N}(c), f^{N+1}(c), ...\} = \emptyset$,
then the optimal $w$ will be constant and $I_{j}$ of the form $[f^n
(c), f^m (c)]$. Else, if  $f^k (c) \in I_{j} \cap \{f^{N}(c),
f^{N+1}(c), ...\} \neq \emptyset$, for $ k \geq N$, we denote by $k$
the minimum one where this happens.  Then,  we get
$$ b(w,x)=0  \to w \in \overline{ i_0 i_1 ... i_{k-1}}, \forall x \in I_{j}.$$

\bigskip

\begin{center}
\includegraphics[scale=0.25,angle=0]{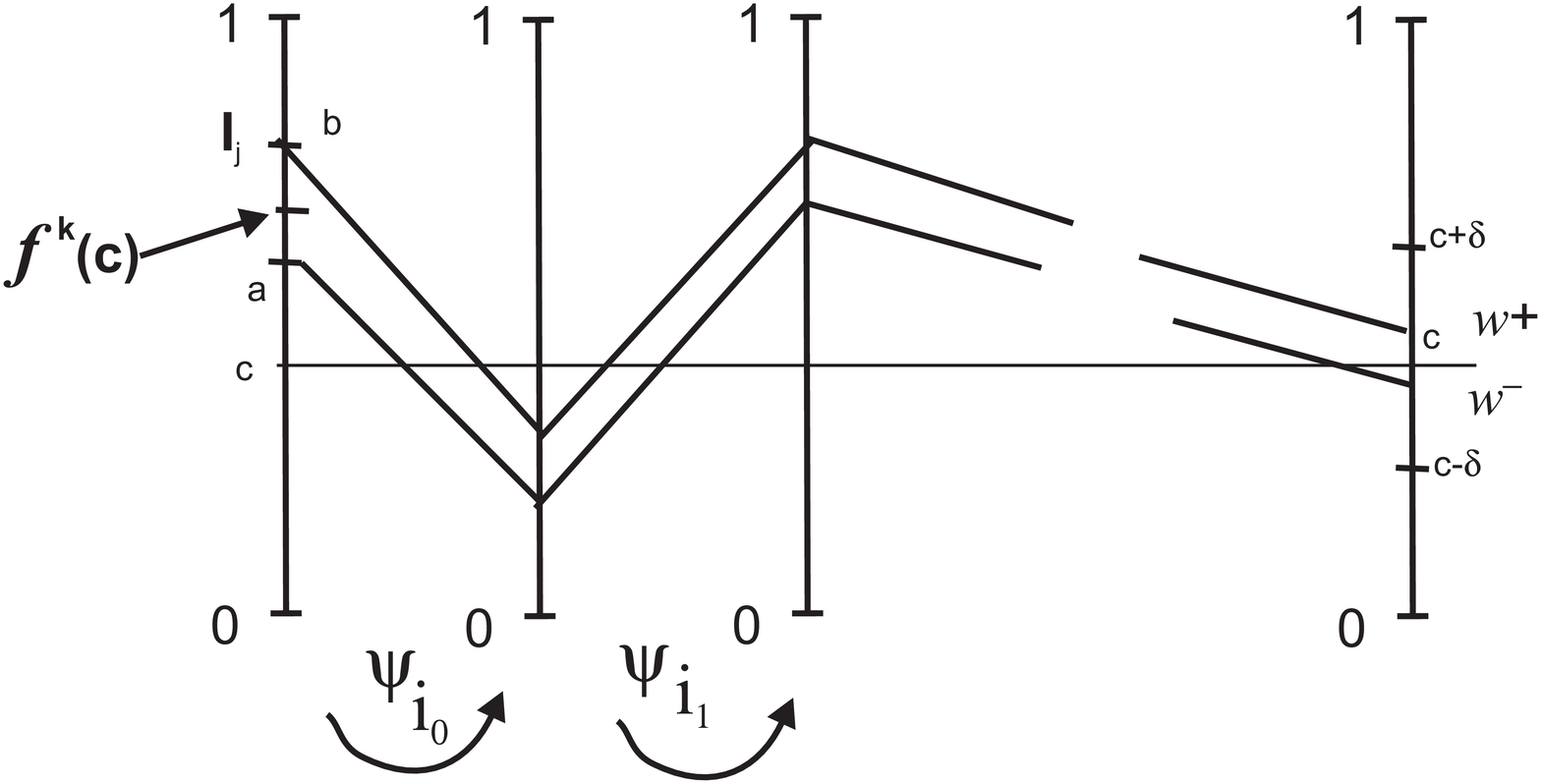}\\
\small{Fig.  4}  \\
\end{center}

\bigskip

If, we iterate the $k-1$ times the FR1, then $c \in
Z_{j}=\psi_{i_{k-1}}...\psi_{i_{0}} I_{j}$. By the choice of $N$ we
get $Z_{j} \subset (c-\delta, c+\delta)$ (see Fig. 4). Dividing
$I_{j}=[a,f^k (c)] \cup [f^k (c), b]$ we get
$$ b(w,x)=0  \to w =(i_0 i_1 ... i_{k-1} * w^-), \forall x \in [a,f^k
(c)]
,$$
and,
$$ b(w,x)=0  \to w =(i_0 i_1 ... i_{k-1} * w^+), \forall x \in [f^k (c), b].$$

Therefore, there is no room for another $f^r(c)$, $r\neq k$, to
belong to $I_j$.

\end{proof}

\begin{rem}
The main problem we have to face is the possibility
that the orbit of $c$ is dense in $[0,1]$.

\end{rem}

In the case $f$ is $d$ to one, we have to consider a finite number
of turning points, and, similar results can also be obtained.

\section{The countable and the good conditions}

We can see from last section that  the subaction $V$ will be
analytic, up to a finite number of points, if and only if,  the
point $c$ is eventually periodic. We would like to have sufficient
conditions for this happen.

We point out that if the maximizing probability for $A$ is a periodic orbit, then, the same happen for $A^*$ (see \cite{LOT} \cite{BLT}).

Remember  that a necessary condition for $w$ to be optimal for a
some $x$ is that $I^*(w)<\infty$.

In \cite{LMST} proposition 19 page 40 it is shown that, if $I^*(w) $ is finite,
then

$$ \lim_{n \to \infty} \frac{1}{n} \sum_{j=0}^{n-1}\delta_{\sigma^j(w)}= \mu_{\infty\, A}^* .$$

In principle, it can
exist an uncountable number of points $w$ such that the above limit can occur.

\begin{defi} We say a continuous $A: [0,1] \to \mathbb{R}$   satisfies the
the countable  condition, if there are a countable number of
possible optimal $w(x)$, when $x$ ranges over the interval $[0,1]$.

\end{defi}

We denote by $M$ the support of the maximizing probability periodic
orbit for $A^*$.

Consider the compact set of points $P=\{w\in \Sigma$, such that
$\sigma(w)\in M$, and $w$ is not on $M\}$.
\bigskip

 \begin{defi}
 We say that $A$ is good, if  for each $w\in P$, we have that $R^*(w)>0$, where $A^*$ is a dual of $A$.
\end{defi}

If $A$ is good, according to \cite{CLO}, a point $w$ satisfies
$I^*(w)<\infty$, if, an only if, $w$ is in the pre-image of the
maximizing periodic orbit. Such set of $w$ is countable, therefore,
if $A$ is good, then $A$ satisfies the countable condition. The good
condition, in principle, is more easy to be checked.

\begin{lemma} Suppose $A$ satisfies the twist and the countable condition.
Then there is at least one $B(w)$ with positive length of the form
$(f^n (c), f^m (c))$. Moreover, for  any subinterval $(a,b)$ there
exists at least one $B(w)$ with positive length of the form $(f^n
(c), f^m (c))$ inside $(a,b)$.

\end{lemma}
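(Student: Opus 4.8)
The plan is to argue by contradiction, exploiting the monotonicity of the multivalued map $x \mapsto w_x$ together with the countability of the set of optimal words. First I would recall the setup: by the twist condition the optimal words along any increasing sequence of $x$'s are non-increasing in the lexicographic order, and by Lemma \ref{ufa} the optimal word $w(x)$ changes, as $x$ crosses from one side to another, exactly when some forward iterate $f^n(c)$ is crossed. By the countable condition the collection $\{w(x) : x \in [0,1]\}$ is countable; call it $\{w^{(1)}, w^{(2)}, \dots\}$. Each set $B(w^{(j)}) = \{x : b(w^{(j)},x)=0\}$ is a closed interval (possibly degenerate) by the previous lemma, and these intervals cover $[0,1]$ since every $x$ has at least one optimal $w$.

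The key step is then: a countable family of closed intervals covering the compact interval $[0,1]$ cannot consist only of singletons (by the Baire category theorem, or more elementarily because $[0,1]$ is uncountable). Hence at least one $B(w)$ has positive length. By the previous lemma, the endpoints of any such nondegenerate $B(w) = [a,b]$ are adherence points of the orbit of $c$; I would want to upgrade this to: the endpoints are actually of the form $f^n(c)$ and $f^m(c)$. This should follow because at an endpoint $b$ of a maximal interval of constancy of the optimal word, the optimal word must change as one crosses $b$ (if it did not change on both sides one could enlarge the interval, using that $B(w)$ is an interval and the optimal-word map takes countably many values so constancy on a punctured neighborhood forces constancy), and Lemma \ref{ufa} says a change of optimal word across a point forces a forward iterate of $c$ to sit at that point — giving $b = f^m(c)$, and similarly $a = f^n(c)$ (the exceptional cases being $a = 0$ or $b = 1$, which I would handle separately or note are excluded once we localize inside a proper subinterval).

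For the ``moreover'' part, I would localize: fix an arbitrary subinterval $(a,b) \subset [0,1]$. The same covering argument applied to $(a,b)$ — it is still uncountable, still covered by the countably many intervals $B(w) \cap [a,b]$ — produces some $B(w)$ meeting $(a,b)$ in a set of positive length, hence containing a nondegenerate subinterval of $(a,b)$; restricting to the portion strictly inside $(a,b)$ and using the endpoint analysis of the previous paragraph, its endpoints are of the form $f^n(c), f^m(c)$, so we get an interval of the desired form inside $(a,b)$. One must check that the maximal interval of constancy around an interior point of $(a,b) \cap B(w)$ does not simply run out at $a$ or $b$ without a change of optimal word; but if it did, we could extend it past $a$ or $b$, contradicting maximality, unless a change occurs, and a change forces $f^k(c)$ to lie there.

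The main obstacle I expect is the rigorous justification that the endpoints of a maximal plateau of the optimal-word map are genuine iterates $f^n(c)$ rather than merely accumulation points of the orbit of $c$. The earlier lemma only gives ``adherence point'', and promoting this to ``$=f^n(c)$'' really uses the countable (or good) condition: if there were infinitely many distinct optimal words accumulating at a point, the orbit of $c$ could accumulate there without any single iterate landing on it. I would handle this by showing that, under the countable condition, the optimal-word map $x \mapsto w(x)$ (chosen, say, as the lexicographically largest optimal word at each $x$) is a monotone function taking countably many values, hence is locally constant off a countable closed set, and each jump point must, by Lemma \ref{ufa}, coincide with some $f^n(c)$; this pins the plateau endpoints to the forward orbit of $c$ exactly.
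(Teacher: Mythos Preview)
Your overall strategy---cover $[0,1]$ by the countably many closed intervals $B(w)$ and use uncountability to find one of positive length---matches the paper's opening move. The gap is in the upgrade step. You assert that ``each jump point must, by Lemma~\ref{ufa}, coincide with some $f^n(c)$,'' but Lemma~\ref{ufa} does not say this: it only guarantees that if the optimal words at $x<x'$ differ then \emph{some} iterate $f^n(c)$ lies in the closed interval $[x,x']$, not that the precise point where the optimal word changes is itself an iterate. Your ``monotone with countably many values $\Rightarrow$ locally constant off a countable set'' observation is correct, but it does not identify that countable exceptional set with the forward orbit of $c$. Concretely, nothing rules out the following structure: $B(w)=[a,b]$ has positive length, the iterates $f^{n_k}(c)$ decrease strictly to $b$ from the right, and $B(w_k)=[f^{n_{k+1}}(c),f^{n_k}(c)]$ tile $(b,f^{n_1}(c)]$. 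Here $b$ is the right endpoint of a nondegenerate $B(w)$, is a genuine jump point of $u^+$, yet is only an accumulation point of the orbit of $c$, not an iterate. This is exactly the scenario the preceding remark in the paper warns about, and it is fully compatible with the countable condition. So your stronger claim---that \emph{every} endpoint of a positive-length $B(w)$ is an iterate of $c$---is false, and since your argument for the lemma rests on it, the proof does not go through.

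The paper avoids this by not trying to prove the stronger claim. Instead it argues by contradiction: assume every positive-length $B(w)$ has its endpoints only as accumulation points of $\{f^j(c)\}$, and run a Cantor-type construction. At each stage, inside every remaining gap one finds (by uncountability of the gap and countability of the $B(w)$'s) a new positive-length $B(w)$, removes it, and continues. The residual set after infinitely many stages is uncountable, and the points in it must carry pairwise distinct optimal $w$'s, contradicting the countable condition. The same construction localized to an arbitrary subinterval $(a,b)$ gives the ``moreover'' clause. The essential idea you are missing is this global contradiction via a nested removal scheme, rather than a local analysis at a single endpoint.
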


\begin{proof}

Denote the possible $w$, such that, $I^*(w)< \infty$, by $w_j$, $j
\in \mathbb{N}$.

For each $w^j$, $j \in \mathbb{N}$, denote $I_j=B(w^j)$, the maximal
interval where for all $x \in I_j$, we have that, $(x,w^j)$ is an
optimal pair. Some of these intervals could be eventually a point,
but, an infinite number of them have positive length, because the
set $[0,1]$ is not countable. We consider from now on just the ones
with positive length.

Note that by the same reason, in each subinterval $(e,u)$, there
exists an infinite countable number of $B(w)$ with positive length.

We suppose, by contradiction, that each interval $B(w)=[a,b]$, with
positive length is such that, each side is approximated by a
sub-sequence of points $f^j(c)$.

Take one interval $(a_1,b_1)$ with positive length inside $(0,1)$.
There is another one $(a_2,b_2)$  inside  $(0,a_1)$, and one more
$(a_3,b_3)$ inside $(b_1,1)$.

If we remove from the interval $[0,1]$ these three intervals we get
four intervals. Using our hypothesis, we can find new intervals with
positive length inside each one of them. Then we do the same removal
procedure as before. This procedure is similar to the construction
of the Cantor set. If we proceed inductively on this way, the set of
points $x$ which remains after infinite steps is not countable. An
uncountable number of such $x$ has a different $w(x)$. This is not
possible because the optimal $w(x)$ are countable.

Then, the first claim of the lemma is true.

Given an interval $(a,b) \subset (0,1)$, we can do the same and use
the fact that $(a,b)$ is not countable.

\end{proof}

\bigskip

\begin{lemma} \label{both} Suppose $A$ satisfies the twist and the countable condition. If $c$ is the turning point, then, there is $\delta>0$, $w^{-},
w^{+}$, such that, $b(w^{-},x)=0$, for any $x \in (c-\delta, c]$,
{\bf and}, $b(w^{+},x)=0$, for any $x \in [ c, c+\delta)$.

That is, $c$ is isolated of its forward orbit by both sides.
\end{lemma}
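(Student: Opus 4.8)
The plan is to argue by contradiction and to treat the left side of $c$ in detail; the right side is entirely symmetric after interchanging the symbols $0$ and $1$, and the $\delta$ of the statement is the minimum of the two one-sided values. First I construct, for $c$, an optimal sequence with first symbol $1$: choosing $x_j\nearrow c$ with $x_j<c$, each $w(x_j)$ begins with $1$ by the definition of the turning point, and passing to a limit point $w^{-}$ in the compact space $\Sigma$, lower semicontinuity of $b$ (note $b\ge 0$) gives $b(w^{-},c)=0$; so $c\in B(w^{-})$ and $w^{-}_0=1$. By the lemma describing the sets $B(w)$, $B(w^{-})$ is a closed interval, and since every optimal sequence at a point $x>c$ begins with $0$ it cannot contain such a point; hence $B(w^{-})=[a,c]$ with $a\le c$. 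If $a<c$ the lemma is proved on the left with $\delta=c-a$.

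So assume that no optimal sequence at $c$ with first symbol $1$ has a positive-length $B(\cdot)$. Then the optimal assignment $x\mapsto w(x)$ does not stabilise as $x\nearrow c$: if $w(x)\equiv\tilde w$ on some $(c-\eta,c)$, then by continuity of $b$ in $x$ the value $\tilde w$ would be optimal at $c$, would begin with $1$, and would have $B(\tilde w)\supseteq(c-\eta,c]$ of positive length, contrary to the assumption. Consequently, for every $\delta>0$ the optimal assignment is non-constant on $(c-\delta,c]$, and by Lemma \ref{ufa} the optimal sequence can only change across points of the forward orbit of $c$; hence every left neighbourhood of $c$ meets $\{f^n(c):n\ge 0\}$, and there is a strictly increasing sequence of pairwise distinct forward images $f^{n_k}(c)\nearrow c$.

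The remaining, and decisive, step is to turn this accumulation into a contradiction with the countable condition. Using monotonicity of $x\mapsto w(x)$ and Lemma \ref{ufa}, on each maximal subinterval of a fixed small left neighbourhood $(c-\delta_0,c)$ containing no forward image of $c$ the optimal sequence is constant, adjacent such subintervals carry different values, and --- by the previous lemma, together with the fact that positive-length sets $B(w)$ have pairwise disjoint interiors --- those among them carrying a positive-length $B(w)$ are of the form $(f^n(c),f^m(c))$. Thus, up to a countable exceptional set of points lying in singleton sets $B(w)$, the neighbourhood $(c-\delta_0,c)$ is carved into countably many positive-length intervals $B(w)$ with genuine forward-image endpoints, accumulating at $c$. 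One then runs the Cantor-type construction from the proof of the previous lemma, localised at $c$: in each subinterval produced at a given stage, shrink the ambient window toward $c$ until the previous lemma yields a properly interior $B(w)=(f^n(c),f^m(c))$, remove it so that two genuine gaps remain, and iterate; the set remaining after infinitely many steps is uncountable, and on it the optimal assignment is injective, because between any two of its points lies a forward image of $c$ (Lemma \ref{ufa}). This produces uncountably many optimal sequences and contradicts the countable condition, so $a<c$ after all. The symmetric argument on the right of $c$ produces $w^{+}$ with $w^{+}_0=0$, and replacing $\delta$ by the smaller of the two values completes the proof; the final assertion that $c$ is isolated from its forward orbit on both sides is immediate, since the forward orbit of $c$ then misses $(c-\delta,c)\cup(c,c+\delta)$.

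I expect this last paragraph to be the genuine obstacle. The subtlety is that near $c$ the positive-length sets $B(w)$ essentially tile a one-sided neighbourhood of $c$ (sharing endpoints rather than leaving gaps), so the Cantor-type removal cannot be carried out naively; the previous lemma is used in an essential way precisely to ensure that at every scale there is a $B(w)$ whose two endpoints are genuine iterates $f^n(c),f^m(c)$ of the turning point, rather than mere adherence points of the orbit of $c$ --- this is what makes the complementary gaps real and prevents the construction from degenerating into a nested (hence only countable) family. Once $c$ is shown to be isolated from its forward orbit on both sides, Lemma \ref{iso} applies and yields that $c$ is pre-periodic, which is exactly the input required by the main theorem.
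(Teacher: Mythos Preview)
Your proof has a genuine gap, and you have correctly identified where it lies. The Cantor-type construction cannot be made to work from your hypothesis alone.

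The issue is this. Your standing assumption is only that forward images of $c$ accumulate at $c$ from the left. When you remove a first $B(w_1)=[f^{n_1}(c),f^{m_1}(c)]$ from $(c-\delta_0,c)$, the \emph{right} gap $(f^{m_1}(c),c)$ is still adjacent to $c$ and therefore still contains infinitely many forward images of $c$. But the \emph{left} gap $(c-\delta_0,f^{n_1}(c))$ need not: you have no information that $f^{n_1}(c)$ is itself an accumulation point of the forward orbit. That left gap could perfectly well be covered by a single $B(w)$ (or finitely many), in which case the recursion there terminates. Your attempted fix --- ``shrink the ambient window toward $c$'' --- only makes sense in the gap adjacent to $c$; it has no meaning in a gap bounded away from $c$. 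So the binary tree collapses to a single nested chain of right gaps, and the remainder is countable, not uncountable. Concretely, the scenario in which $(c-\delta_0,c)$ is tiled by a sequence $B(w_k)=[a_k,a_{k+1}]$ with $a_k=f^{n_k}(c)\nearrow c$ is entirely consistent with your hypothesis and with the previous lemma (each $B(w_k)$ already has iterate endpoints), yet it involves only countably many optimal sequences.

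This is exactly the difference from the previous lemma's proof: there the contradiction hypothesis was that \emph{every} endpoint of \emph{every} positive-length $B(w)$ is an accumulation point of the forward orbit, which guarantees branching at every stage. You only have this at the single point $c$.

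The paper's proof proceeds by an entirely different mechanism. Rather than a counting argument near $c$, it takes any $B(w)=[f^{n_j}(c),f^{m_j}(c)]$ with genuine iterate endpoints (supplied by the previous lemma, anywhere in $[0,1]$) and \emph{pulls it back by inverse branches}, following the pre-orbit of one endpoint step by step until that endpoint becomes $c$ itself; since the original interval contained no forward images of $c$, neither do its pullbacks, and one obtains directly an interval of the form $(c,r)$ or $(r,c)$ with no forward images inside. The delicate remaining case --- when this procedure only ever produces one-sided isolation --- is then handled by a separate combinatorial argument showing that all such intervals would be forced to have constant $m_j-n_j=k$, whence any accumulation point of the forward orbit of $c$ is $k$-periodic, hence there are finitely many, hence $c$ is eventually periodic. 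The dynamics of the inverse branches is essential and is what your purely combinatorial approach is missing.
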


\begin{proof}

If there exist just a finite number of intervals, then $c$ is
eventually periodic. We will suppose $c$ is not eventually periodic,
and, we will reach a contradiction. Therefore, if $f^n(c) = f^m
(c),$ then $m=n$.

Denote by $I_j=[a_j,b_j]$. We denote $I_0$ the interval of the form
$[0,b_0]$, and,  $I_1$ the interval of the form $[a_0,1]$. From last
lemma, there is $j\neq 0,1$, and $n_j$ and $m_j$, such that $a_j=
f^{n_j}(c)$ and $b_j= f^{m_j}(c)$.

Suppose first that  $n_j<m_j$.

Consider the inverse branch $\psi_{i_1^j}$, where $i_1^j=i_1$ is
such that $\psi_{i_1}((f^{n_j})(c) )= f^{n_j-1}(c)$. This $i_1$
do not have to be the first symbol of the optimal $w$  for
$f^{n_j}(c)$. Then, $\psi_{i_1}(I_j)$ is another interval, which is
strictly inside a domain of injectivity of $f$, does not contain any
forward image of $c$, and in its left side we have the point
$f^{n_j-1}(c)$.

 Then, repeating the same procedure
inductively, we get $i_2$, such that
$$\psi_{i_2}((f^{n_j-1})(c) )=
f^{n_j-2}(c),$$
determining another interval which does not contain
any forward image of $c$, and in his left side we have the point
$f^{n_j-2}(c)$. Repeating the reasoning over and over again, always
taking the same inverse branch which contain $f^n(c)$, $0\leq n \leq
n_j$, after $n_j$ times we arrive in an interval of the form $(c,
r_j)$. Note that each inverse branch preserves order. It is not
possible to have an iterate $f^k(c)$, $k \in \mathbb{N}$, inside
this interval $(c, r_j)$ (by the definition of $I_j$). Then, the
optimal $w$ for $x$ in this interval $(c, r_j)$ is a certain
$\tilde{w_j}$ which can be different of $\sigma^{n_j} (w_j).$

Suppose now that  $n_j>m_j$.

 Using the analogous procedure we get that there exists $r^j$,
such that the optimal $w(x)$ for $x$ in the interval $(r^j, c)$ is a
certain $\hat{w_j}.$

If both cases happen, then $c$ is eventually periodic.

The trouble happens when just one type of inequality is true.
Suppose without lost of generality that we have always $n_j<m_j$.

Let's fix for good a certain $j$.

Therefore, all we can get with the above procedure is that  $c$ is
isolated by the right side

In the procedure of taking pre-image of $f^{n_j}(c)$, always
following the forward orbit $f^n(x)$, $0\leq n \leq n_j$, we will
get a sequence of $i_1,i_2,...,i_{n_j}$.  In the first step  we have
two possibilities: $\psi_{i_1} (f^{m-j}(c)) = f^{m_j-1}(c)$, or not.

If it happens the second case, we are done.  Indeed, the interval
$\psi_{i_1} [f^{n_j}(c),f^{m_j}(c))]$ does not contain forward
images of $c$ (otherwise $[f^{n_j}(c)),f^{m_j}(c))]$ would also
have). Now we follow the same procedure as before, but, this time
following the branches which contains the orbit of $f^m (c)$, $0\leq
m\leq m_j$. In this way, we get that $c$ is isolated by the left
side.

Suppose  $\psi_{i_1} (f^{m_j}(c))  = f^{m_j-1}(c)$. Consider the
interval, $[f^{n_j-1}(c)),f^{m_j-1}(c))]$, which do not contain
forward images of $c$.

Now, you can ask the same question:  $\psi_{i_2} (f^{m_j-1}(c)) =
f^{m_j-2}(c)$? If this do not happen (called the second option),
then, in the same way as before, we are done ($c$ is also isolated
by the right side). If the expression is true, then, we proceed with
the same reasoning as before.

We proceed in an inductive way until time $n_j$. If in some time we
have the second option, we are done, otherwise, we show that any
$x\in (c, f^{m_j-n_j}(c))$ has a unique optimal $w(x)$ (there is no
forward image of $c$ inside it).

Denote $k=m_j-n_j$ for the $j$ we fixed.

From the above we have that for any $B(w)$, which is an interval of
the form $[f^{n_i}(c)),f^{m_i}(c))]$, for any possible $i$, it is
true that $m_i-n_i=k.$

We claim that the set of points $x$ which are extreme  points of any
$B(\tilde{w})$,  and, such that $x$ can be approximated by the
forward orbit of $c$ is finite. Suppose without lost of generality
that $x$ is the right point of a $B(w)=(z,x)$.

If the above happens, then, by the last lemma, applied to
$(a,b)=(x,\epsilon)$, $\epsilon$ small, we have an infinite sequence
of intervals of the form $[f^{n_i}(c)),f^{n_i+k}(c))]$, such that
$f^{n_i}(c)\to x$, as $n_i\to \infty.$ Therefore, $x$ is a periodic
point of period $k$. There are a finite number of points of period
$k$. This shows our main claim. Finally, $c$ is eventually periodic.

\end{proof}

\begin{thm} \label{maincor1} Suppose $A$ satisfies the twist and the countable condition, that the maximizing probability is unique, and, also that it is a
periodic orbit, then $V$ is analytic, up to a finite number of
points.

\end{thm}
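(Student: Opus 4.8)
The plan is to combine the structural description of optimal pairs with the analyticity of the involution kernel, the decisive input being that the standing hypotheses (still in the case $d=2$) force the turning point $c$ to be pre-periodic. First I would invoke Lemma \ref{both}: since $A$ is twist and satisfies the countable condition, $c$ is isolated from its forward orbit on both sides, i.e. there are $\delta>0$ and $w^-,w^+\in\Sigma$ with $b(w^-,x)=0$ for $x\in(c-\delta,c]$ and $b(w^+,x)=0$ for $x\in[c,c+\delta)$. Feeding this into Lemma \ref{iso} yields that $c$ is eventually periodic; in particular the forward orbit $\mathcal{O}^+(c)=\{f^n(c):n\ge 0\}$ is a finite set.

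Next I would extract the domains of analyticity. For each $w$, the set $B(w)=\{x\in[0,1]:b(w,x)=0\}$ is a closed interval (possibly degenerate) whose endpoints, by the interval lemma and the remark following it, are adherence points of $\mathcal{O}^+(c)$, hence -- the orbit being finite -- elements of $\mathcal{O}^+(c)\cup\{0,1\}$. Consequently there are only finitely many possibilities for a $B(w)$ of positive length; list the distinct such intervals as $J_1,\dots,J_N$, with $J_k=B(w_k)$. Since by compactness every $x\in[0,1]$ admits an optimal $w$, i.e. lies in some $B(w)$, and the degenerate $B(w)$ are contained in the finite set $\mathcal{O}^+(c)\cup\{0,1\}$, the union $J_1\cup\dots\cup J_N$ covers $[0,1]$ except for a finite set $F$. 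On the interior of each $J_k$, for every $x$ we have $b(w_k,x)=0$, that is
$$V(x)=W(w_k,x)-V^*(w_k)-I^*(w_k)=H_\infty(w_k,x)-I^*(w_k),$$
where $V^*(w_k)$ and $I^*(w_k)$ are constants while, by Corollary \ref{ana}, the function $x\mapsto H_\infty(w_k,x)$ is analytic (indeed it extends holomorphically to the complex neighbourhood $O$ of $[0,1]$). Hence $V$ is analytic on the interior of each $J_k$, and therefore on $[0,1]\setminus F$, which is the assertion.

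The main obstacle is entirely upstream, in Lemma \ref{both}, which is being invoked here: the genuine work is to rule out the configuration where $c$ is isolated from its forward orbit on one side only. That is the delicate combinatorial step, resting on the monotonicity of the (multivalued) assignment $x\mapsto w_x$ that comes from the twist condition, together with the impossibility of uncountably many distinct optimal sequences under the countable condition. Once Lemma \ref{both}, and hence pre-periodicity of $c$ via Lemma \ref{iso}, is granted, the finiteness of the number of analyticity domains and the analyticity of $V$ on each of them follow mechanically from Corollary \ref{ana}, as above.
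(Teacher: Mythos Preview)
Your argument is correct and follows exactly the paper's route: the paper's own proof simply reads ``It follows from Lemma \ref{both} and Lemma \ref{iso},'' and you have unpacked precisely that chain (Lemma \ref{both} $\Rightarrow$ $c$ isolated on both sides $\Rightarrow$ Lemma \ref{iso} $\Rightarrow$ $c$ pre-periodic $\Rightarrow$ finitely many $B(w)$ with endpoints in $\mathcal{O}^+(c)\cup\{0,1\}$ $\Rightarrow$ $V$ analytic on each via Corollary \ref{ana}). Your added detail on why finitely many nondegenerate $B(w)$ suffice to cover $[0,1]$ up to a finite set is a useful elaboration that the paper leaves implicit.
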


\begin{proof}

It follows from Lemma \ref{both} and Lemma \ref{iso}.

\end{proof}

Therefore, we get:

\begin{thm} \label{maincor} Suppose $A$ satisfies the twist condition and that the turning point is eventually periodic,
that the maximizing probability is unique, and, also that it is a
periodic orbit, then $V$ is analytic, up to a finite number of
points.

\end{thm}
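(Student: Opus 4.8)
The plan is to obtain the piecewise-analytic structure of $V$ directly from the way $[0,1]$ is partitioned, up to finitely many points, by the level sets $B(w)=\{x : b(w,x)=0\}$, combined with the fact that for each fixed $w$ the kernel $x\mapsto W(w,x)$ extends holomorphically to a fixed complex neighbourhood $O$ of $[0,1]$.

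First I would set the stage. With the normalizations $\gamma=0$ and $m(A^*)=0$ already in force, one has $V(x)=\sup_{w\in\Sigma}[\,W(w,x)-V^*(w)-I^*(w)\,]$ for every $x\in[0,1]$; since $I^*$ is lower semicontinuous and $W-V^*$ continuous, for each $x$ the supremum is attained by some optimal $w(x)$, i.e. $b(w(x),x)=0$, so $x\in B(w(x))$. Hence the sets $B(w)$, $w\in\Sigma$, cover $[0,1]$. Since $A$ is twist and $d=2$, the turning point $c$ is defined and $x\mapsto w(x)$ is monotone non-increasing, so Lemma \ref{ufa} and the structure lemma for the sets $B(w)$ are available: every non-empty $B(w)$ is a closed subinterval of $[0,1]$ whose two endpoints are adherence points of the forward orbit $\mathcal{O}(c)=\{f^n(c):n\ge 0\}$.

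The next step would use eventual periodicity of $c$. Then $\mathcal{O}(c)$ is finite, hence closed, so the endpoints of every non-degenerate $B(w)$ lie in the finite set $\mathcal{O}(c)\cup\{0,1\}$; in fact each such $B(w)$ has the form $[f^n(c),f^m(c)]$ (or $[0,f^m(c)]$, or $[f^n(c),1]$). There are therefore only finitely many distinct non-degenerate $B(w)$, say with representatives $w_1,\dots,w_N$, while every other non-empty $B(w)$ is a single point of $\mathcal{O}(c)$, so finitely many of those as well. Consequently $F:=[0,1]\setminus\bigcup_{i=1}^N\operatorname{int}B(w_i)$ is contained in $\mathcal{O}(c)\cup\{0,1\}$ and is finite. (This is the finiteness already extracted inside the proof of Lemma \ref{iso}; here it is used under the \emph{hypothesis} that $c$ is pre-periodic, so Lemma \ref{both} — which in Theorem \ref{maincor1} produced that conclusion from an isolation property — is bypassed.) It then remains to identify $V$ on each open interval $\operatorname{int}B(w_i)$: there the sequence $w_i$ is optimal for every point, so $b(w_i,\cdot)\equiv 0$ unwinds to $V(x)=W(w_i,x)-V^*(w_i)-I^*(w_i)$, with both constants finite because $B(w_i)\ne\emptyset$ exhibits an optimal pair; since $W(w_i,\cdot)=H_\infty(w_i,\cdot)+V^*(w_i)$ and $H_\infty(w_i,\cdot)$ is holomorphic on $O$ by Corollary \ref{ana}, $V$ is analytic on $\operatorname{int}B(w_i)$ with a complex-analytic extension. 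Because $[0,1]=F\cup\bigcup_{i=1}^N\operatorname{int}B(w_i)$ with $F$ finite and $N<\infty$, $V$ is piecewise analytic with at most $N$ domains of analyticity, which is the assertion.

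I expect the only real subtlety in this assembly — all the serious work being already contained in the twist-based combinatorics (monotonicity of $x\mapsto w(x)$, the characterization of optimal change in Lemma \ref{ufa}, the structure lemma for $B(w)$, and Lemma \ref{iso}), which confine the changes of the optimal sequence to forward images of $c$ — to be the implication ``endpoints of $B(w)$ are adherence points of $\mathcal{O}(c)$'' $\Rightarrow$ ``$F$ is finite''. That is precisely the point where finiteness of $\mathcal{O}(c)$, i.e. eventual periodicity of $c$, is indispensable; it is also the point whose failure (for instance when the maximizing measure is supported on a Cantor set) produces infinitely many domains of analyticity.
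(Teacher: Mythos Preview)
Your proposal is correct and follows essentially the same route as the paper. The paper itself gives no separate proof for Theorem~\ref{maincor}: it simply writes ``Therefore, we get'' after Theorem~\ref{maincor1}, relying on the structure lemma for the sets $B(w)$, the finiteness observation stated inside the proof of Lemma~\ref{iso} (``If $c$ is eventually periodic there exist just a finite number intervals $B(w)$ with positive length. The other $B(w)$ are reduced to points and they are also finite''), and Corollary~\ref{ana}. You have made this implicit argument explicit, and you correctly observe that Lemma~\ref{both} is bypassed because eventual periodicity of $c$ is now a hypothesis rather than a conclusion.
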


The next example shows that the theory we just presented above allows
one to compute, via an algorithm, the calibrated sub-action $V$.
By this, we mean that, if we know $W$, and, we have some information
about the combinatorics of the position of the maximizing orbit, then, we can get the subaction $V$.

\begin{example}\label{ex1}

We assume that $m(A)=0$ and $T(x)=2\,x$ (mod 1).

In this example we consider an measure supported in a periodic orbit of period 4, $x_{0}=\frac{1}{15}$ it is easy to see that in this case the optimal measure on $\hat{\Sigma}$ is supported in:
$$(\frac{1}{15}, \overline{1000}...), \; (\frac{2}{15}, \overline{0100}...), \; (\frac{4}{15}, \overline{0010}...),\text{ and }(\frac{8}{15}, \overline{0001}...).$$

By definition the turning point $c$ should be between $\frac{1}{15}$ and $\frac{2}{15}$. Let us consider two cases:\\

\textbf{Case 1: } Suppose that $c=\frac{2}{16}$, that is a eventually periodic point, but more than that, $c$ is a pre-image of order 4 of the fix point 1.

The orbit of $c$, which is given by $c=\frac{2}{16}, \, f(c)=\frac{4}{16}, \,  f^{2}(c)=\frac{8}{16}, \, f^{3}(c)=1$. In this way the $w(x)$ of the optimal pairs should be constant in the intervals:\\
$$I_1 =[0,c], \, I_2 =[c,f(c)], \, I_3 =[f(c),f^{2}(c)], \,  I_4 =[f^{2}(c),f^{3}(c)=1].$$

In this case the $c$ is pre-periodic.

 Note that the $f^k(c)$ are monotonous in $k$. This is not always the case for other examples.

Since there is one only periodic point in each interval we get:\\
$ b(x, \overline{1000}...)=0, \; \forall x \in I_1$\\
$ b(x, \overline{0100}...)=0, \; \forall x \in I_2$\\
$ b(x, \overline{0010}...)=0, \; \forall x \in I_3$\\
$ b(x, \overline{0001}...)=0, \; \forall x \in I_4$.\\

\begin{center}
\includegraphics[scale=0.3,angle=0]{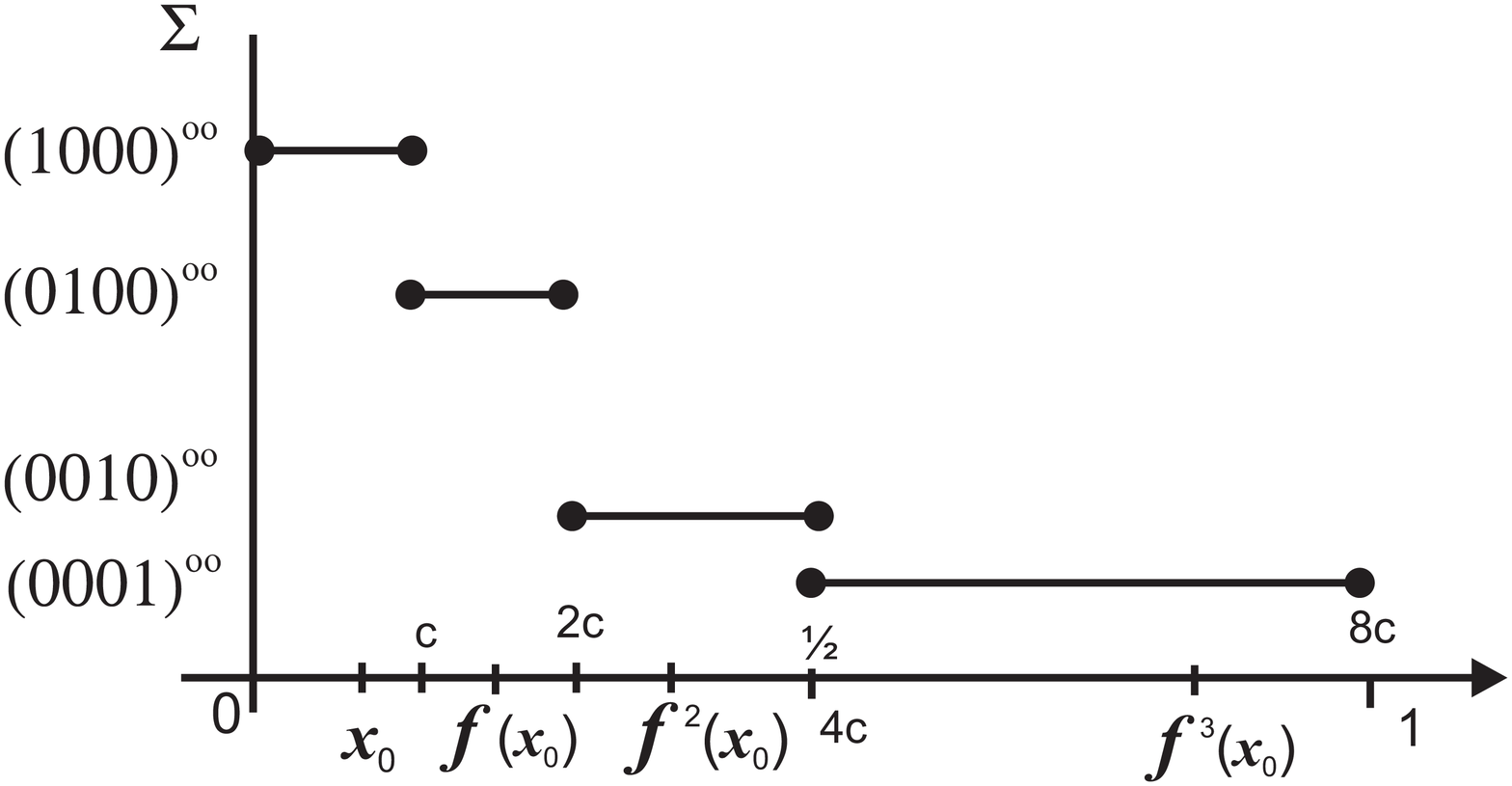}\\
\small{Fig. 5}  \\
\end{center}

Using the definition of $b$ we get\\
\[
V(x)= \left\{
  \begin{array}{ll}
    -V^*(\overline{1000}...) - 0 +W(x,\overline{1000}...) , & \forall x \in I_1\\
    -V^*(\overline{0100}...) - 0 +W(x,\overline{0100}...) , & \forall x \in I_2\\
    -V^*(\overline{0010}...) - 0 +W(x,\overline{0010}...) , & \forall x \in I_3\\
    -V^*(\overline{0001}...) - 0 +W(x,\overline{0001}...) , & \forall x \in I_4\\
  \end{array}
\right.
\]

The fundamental relation allow us to write:
\[
V(x)= \left\{
  \begin{array}{ll}
    V(\psi_{\overline{1000}...}x) + A(\psi_{\overline{1000}...}x), & \forall x \in I_1\\
    V(\psi_{\overline{0100}...}x) + A(\psi_{\overline{0100}...}x) , & \forall x \in I_2\\
    V(\psi_{\overline{0010}...}x) + A(\psi_{\overline{0010}...}x), & \forall x \in I_3\\
    V(\psi_{\overline{0001}...}x) + A(\psi_{\overline{0001}...}x), & \forall x \in I_4\\
  \end{array}
\right.
\]

In particular, applying in $x_0$ we have:\\
$$V(x_0)= V(\psi_{\overline{1000}...}x_0) + A(\psi_{\overline{1000}...}x_0)= V(f^{3}x_0) + A(f^{3}x_0)$$
$$V(f(x_0))= V(\psi_{\overline{0100}...}f(x_0)) + A(\psi_{\overline{0100}...}f(x_0))= V(x_0) + A(x_0)$$
$$V(f^{2}(x_0))= V(\psi_{\overline{0010}...}f^{2}(x_0)) + A(\psi_{\overline{0010}...}f^{2}(x_0))= V(f (x_0)) + A(f (x_0))$$
$$V(f^{3}(x_0))= V(\psi_{\overline{0001}...}f^{3}(x_0)) + A(\psi_{\overline{0001}...}f^{3}(x_0))= V(f^{2}(x_0)) + A(f^{2} (x_0))$$

Since $V$ is unique up to constants we can choose $V(x_0)=0$ for instance an solve the system finding:
$$V(x_0)=0, \; V(f(x_0))= A(x_0) ,
\;  $$ $$V(f^{2}x_0)=  A(x_0) -  A(f^{2} (x_0))\text{ and }V(f^{3}x_0)= - A(f^{3}x_0)$$

Using this results in the previous formula for $V$ we get the values of $V^*$:\\
1- $V(x_0)=0 = -V^*(\overline{1000}...) - 0 +W(x,\overline{1000}...)$, thus
 $$V^*(\overline{1000}...)=W(x_{0},\overline{1000}...).$$
2- $V(T(x_0))= A(x_0)= -V^*(\overline{0100}...) - 0 +W(f(x_0),\overline{0100}...)$, thus
$$V^*(\overline{0100}...)= -A(x_0) + W(f(x_0),\overline{0100}...).$$
3- $V(f^{2}(x_0))=  A(x_0) +  A(f (x_0))= -V^*(\overline{0010}...) - 0 +W(f^{2}(x_0),\overline{0010}...)$, thus
$$ V^*(\overline{0010}...) =- A(x_0) -   A(f (x_0)) +  W(f^{2}(x_0),\overline{0010}...)$$
4- $V(f^{3}(x_0))= - A(f^{3}x_0)= -V^*(\overline{0001}...) - 0 +W(f^{3}(x_0),\overline{0001}...)$, thus
$$V^*(\overline{0001}...) = A(f^{3}x_0) + W(f^{3}(x_0),\overline{0001}...)$$

So the explicit formula for $V$ depends on $A$ and is given by
\[
V(x)= \left\{
  \begin{array}{ll}
    W(x,\overline{1000}...) -W(x_{0},\overline{1000}...) , & \forall x \in I_1\\
    A(x_0) +W(x,\overline{0100}...) - W(f(x_0),\overline{0100}, & \forall x \in I_2\\
    A(x_0) + A(f(x_0))+W(x,\overline{0010}...)- W(f^{2}(x_0),\overline{0010}...) , & \forall x \in I_3\\
    -A(f^{3}x_0) + W(x,\overline{0001}...) - W(f^{3}(x_0),\overline{0001}...) , & \forall x \in I_4\\
  \end{array}
\right.
\]

Or
\[
V(x)= \left\{
  \begin{array}{ll}
    \Delta(x, x_{0}, \overline{1000}...), & \forall x \in I_1\\
    A(x_0) + \Delta(x, f(x_0),\overline{0100}...), & \forall x \in I_2\\
    A(x_0) +   A(f (x_0))   + \Delta(x, f^{2}(x_0),\overline{0010}...), & \forall x \in I_3\\
    -A(f^{3}x_0) +\Delta(x,f^{3}(x_0),\overline{0001}...), & \forall x \in I_4\\
  \end{array}
\right.
\]

\end{example}

\section{The optimal solution when the maximizing probability  is not a periodic orbit}
\bigskip

We are going to analyze now the  variation of the optimal point when the support of the maximizing probability is
not necessarily a periodic orbit. What can be said in the general case?

Consider the subaction defined by,
$$V(x) =
\sup_{ w \in \Sigma}\, (H_\infty (w,x)  -\, I^*(w))
$$

Remember that as $I^*$ is lower semicontinuous and
$H_\infty=W- V^*$ is continuous, then for each fixed $x$, the supremum of
$H_\infty (w,x)  -\, I^*(w)$ in the variable $w$ is achieved, and we
denote (one of such $w$) it by $w(x)$. In this case we say $w(x)$ is
an optimal point for $x$.

We want to show that $w(x)$ is  unique for the generic $x$

Define the multi-valuated function $U: [0,1] \to \Sigma$ given by:
$$U(x)=\{w(x) | x \in [0,1]\}$$

As graph $(U)$ is closed in each fiber, and $\Sigma $  is compact we can define:
$$u^{+}(x)=\max U(x), \text{ and } u^{-}(x)=\min U(x).$$

Since the potential $A$ is twist we know that $U$ is a monotone not-increasing multi-valuated function, that is,
$$u^{-}(x) \geq u^{+}(x+\delta),$$
when $x<x +\delta$. In particular are monotone not-increasing single-valuated functions.
\bigskip

\begin{center}
\includegraphics[scale=0.33,angle=0]{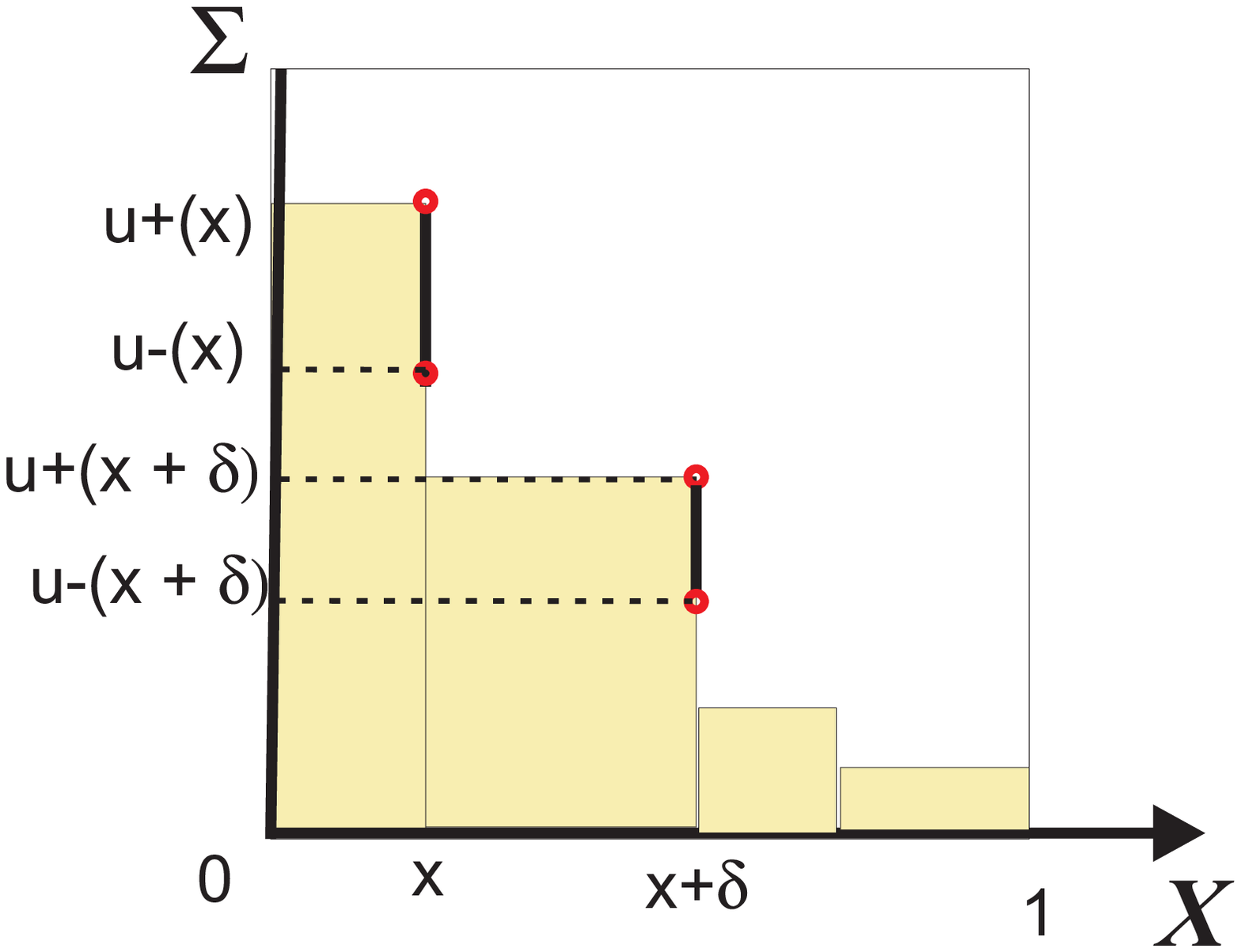}\\
\small{fig. 6 \,\, The graph of $U$}\\
\end{center}

\bigskip

We claim that $u^{+}$ is left continuous.
In order to conclude that, take a sequence $x_{n}\to x$ on the left side. Consider, the sequence $u^{+}(x_{n}) \in \Sigma$, so its set of accumulation points is contained in $U(x)$. Indeed, suppose $\liminf u^{+}(x_{n}) \to \tilde{w} \in \Sigma$. In one hand, we have, $V(x_{n}) = H_\infty
(u^{+}(x_{n}),x_{n})  -\, I^*(u^{+}(x_{n})).$ Taking limits on this equation and using the continuity of $V$ and $H_\infty$  and the lower semicontinuity of $I^*$ we get,
$$ V(x) \leq  H_\infty(\tilde{ w},x)  - \, I^*(\tilde{ w}).$$
Because $\liminf I^*(u^{+}(x_{n})) \geq I^*(\tilde{ w})$. So $\tilde{w} \in U(x)$. On the other hand, $u^{+}$ is monotone not-increasing, so
$u^{+}(x_{n}) \geq u^{+}(x)$. From the previous we get
$$\limsup u^{+}(x_{n}) \geq u^{+}(x) \geq  \tilde{w} =\liminf u^{+}(x_{n}),$$
that is,
$$\lim_{x_{n}\to x^{-}} u^{+}(x_{n}) = u^{+}(x).$$

Now consider a sequence $x_{n}\to x$ on the right side. Take, the sequence $u^{+}(x_{n}) \in \Sigma$, so its set of accumulation points is not necessarily contained in $U(x)$. However it is the case.
Let $x_{n_{k}}$ be a subsequence such that, $u^{+}(x_{n_{k}}) \to \tilde{w}$.

We know that $V(x_{n_{k}}) = H_\infty
(u^{+}(x_{n_{k}}),x_{n_{k}})  -\, I^*(u^{+}(x_{n_{k}})).$
Taking limits on this equation and using the uniform continuity of $V$ and $H_\infty$ we get
$$ I^*(\tilde{w}) \leq  \liminf_{k \to \infty} I^*(u^{+}(x_{n_{k}})) = $$ $$= \liminf_{k \to \infty} H_\infty
(u^{+}(x_{n_{k}}),x_{n_{k}})  -\,V(x_{n_{k}})= H_\infty
(\tilde{w},x)  -\,V(x).$$

In other words, $ V(x) \leq H_\infty
(\tilde{w},x)  -\, I^*(\tilde{w}),$ that is, $\tilde{w} \in U(x)$. So $$cl (u^{+}(x_{n})) \subseteq U(x).$$

Since $u^{+}$ is monotone not-increasing,
$u^{+}(x_{n}) \leq u^{+}(x)$, thus
 $$\limsup u^{+}(x_{n}) \leq u^{+}(x),$$
that is, $u^{+}$  is right upper-semicontinuous.

It is known that for  any USC function defined in a complete metric space the set of points of continuity is generic.

Therefore, we get that:

\begin{thm} \label{ElR}
For a  generic $x$ we have that $U(x)=\{ u^+(x) = u^-(x) \}$ and $w(x)$ is unique.
\end{thm}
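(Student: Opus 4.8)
The plan is to reduce the statement to the single equality $u^+(x)=u^-(x)$ for a generic $x$, and then to deduce that equality from the lower semicontinuity of $u^-$ together with the monotonicity relation $u^-(a)\ge u^+(b)$ for $a<b$ established above. The reduction is immediate: for every optimal $w(x)\in U(x)$ one has
$$u^-(x)=\min U(x)\le w(x)\le \max U(x)=u^+(x),$$
and $U(x)\neq\emptyset$ because $\Sigma$ is compact and $w\mapsto H_\infty(w,x)-I^*(w)$ is upper semicontinuous; hence $u^+(x)=u^-(x)$ forces $U(x)=\{w(x)\}$ to be a singleton, which is exactly the assertion of the theorem.

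First I would record that $u^-$ is lower semicontinuous. This follows from the orientation-reversing symmetry of the whole construction: conjugating by $x\mapsto 1-x$ on $[0,1]$ and by the order-reversing homeomorphism of $\Sigma$ (which interchanges $\max$ and $\min$, hence $u^+$ and $u^-$), the argument already given — showing that $u^+$ is left continuous and right upper semicontinuous — shows that $u^-$ is right continuous and left lower semicontinuous, i.e. $u^-$ is l.s.c. The quoted fact that an u.s.c. function on a complete metric space has a generic set of continuity points has the evident l.s.c. counterpart. Applying it to $u^-$ — after composing with an order-preserving embedding $\Sigma\hookrightarrow\mathbb{R}$, or equivalently using that a monotone function valued in the separable ordered space $\Sigma$ has at most countably many discontinuities — we obtain that the set $G\subset[0,1]$ of continuity points of $u^-$ is generic.

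Next, fix $x\in G$. For every $a<x$ the monotonicity relation applied with $b=x$ gives $u^-(a)\ge u^+(x)$. Letting $a\to x^-$ and using that $u^-$ is continuous at $x$ yields $u^-(x)\ge u^+(x)$; combined with the trivial inequality $u^-(x)\le u^+(x)$ this gives $u^-(x)=u^+(x)$, hence $U(x)=\{u^+(x)\}$ and $w(x)$ is unique. Since $G$ is generic, this proves the theorem.

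The main obstacle is the middle paragraph: transferring the quoted genericity statement from real-valued u.s.c. functions to the $\Sigma$-valued function $u^-$, and verifying that $u^-$ is genuinely l.s.c. I expect the cleanest route is to carry out explicitly the mirror image of the continuity/semicontinuity computation already written for $u^+$, which uses only compactness of $\Sigma$, continuity of $H_\infty$, lower semicontinuity of $I^*$ and the monotonicity of $U$; once this is in place, the rest is a two-line order argument.
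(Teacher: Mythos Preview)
Your argument is correct and is the mirror image of the paper's proof. The paper works with $u^+$ directly: since the text already established that $u^+$ is left continuous and right upper semicontinuous (hence u.s.c.), the generic set of continuity points of $u^+$ is taken; at such a point, if $u^+(x)>u^-(x)$ then the monotonicity $u^-(x)\ge u^+(x+\delta)$ gives $u^+(x)>u^+(x+\delta)$ for all $\delta>0$, contradicting right continuity. You instead pass to $u^-$, argue it is l.s.c., take its generic continuity set, and approach from the left.

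One remark on your justification that $u^-$ is l.s.c.: the ``conjugate by $x\mapsto 1-x$ and the order-reversing map on $\Sigma$'' heuristic is a bit dangerous here, since the dynamics $f$, the potential $A$, and the twist inequality need not be invariant under that symmetry. What actually makes the $u^-$ argument go through is not a global symmetry of the problem but simply re-running the very same computation you quote for $u^+$ with $\min$ in place of $\max$: compactness of $\Sigma$, continuity of $V$ and $H_\infty$, lower semicontinuity of $I^*$, and the monotonicity of $U$ are all that is used, and they yield directly that $u^-$ is right continuous and left l.s.c. With that correction the proof is complete. The paper's route is slightly shorter only because the $u^+$ semicontinuity was already written out in the preceding paragraphs, so no extra step is needed.
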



 \begin{proof}

Indeed, suppose that there is a point in the set of continuity of $u^+(x)$ such that, $u^+(x) > u^-(x)$ so the monotonicity of $U$ implies that
$$u^+(x)> u^{-}(x) \geq u^{+}(x + \delta),$$
for all $\delta >0$. Contradicting the continuity.
\end{proof}

\bigskip

\end{document}